\numberwithin{equation}{section}
\newtheorem{thm}{Theorem}[section]
\newtheorem{cor}[thm]{Corollary}
\newtheorem{lem}[thm]{Lemma}
\newtheorem{prop}[thm]{Proposition}
\newtheorem*{Thm}{Main Theorem}
\newtheorem*{ZThm}{Theorem (Zariski's Theorem)}
\theoremstyle{definition}
\newtheorem{ex}[thm]{Example}
\newtheorem{defn}[thm]{Definition}
\newcommand{\sing}{\mathrm{sing}}
\newcommand{\irr}{\mathrm{irr}}
\newcommand{\trop}{\mathrm{trop}}
\newcommand{\valence}{\mathrm{val}}
\newcommand{\Ker}{{\rm Ker}}
\newcommand{\rank}{{\rm rank}}
\newcommand{\val}{{\rm val}}
\newcommand{\Spec}{{\rm Spec}}
\newcommand{\Br}{{\rm Br}}
\newcommand{\RR}{{\mathbb R}}
\newcommand{\ZZ}{{\mathbb Z}}
\newcommand{\PP}{{\mathbb P}}
\newcommand{\NN}{{\mathbb N}}
\newcommand{\GG}{{\mathbb G}}
\newcommand{\QQ}{{\mathbb Q}}
\newcommand{\CL}{{\mathcal L}}
\newcommand{\CO}{{\mathcal O}}
\newcommand{\CM}{{\mathcal M}}
\newcommand{\CN}{{\mathcal N}}
\newcommand{\CI}{{\mathcal I}}
\newcommand{\CJ}{{\mathcal J}}
\newcommand{\CF}{{\mathcal F}}
\newcommand{\CG}{{\mathcal G}}
\newcommand{\CX}{{\mathcal X}}
\newcommand{\fm}{{\mathfrak m}}
\newcommand{\fn}{{\mathfrak n}}
\newcommand{\fG}{{\mathfrak G}}
\newcommand{\Star}{{\rm Star}}
\def\:{\colon}
\def\val{\nu}
\newcommand{\cX}{{\mathcal{X}}}
\newcommand{\pa}{\mathrm{p_a}}
\newcommand{\pg}{\mathrm{p_g}}
\newcommand{\st}{\mathrm{st}}
\newcommand{\red}{\mathrm{red}}
\newcommand{\ev}{\mathrm{ev}}
\newcommand{\ov}{\mathrm{ov}}
\newcommand{\m}{\mathfrak w}
\theoremstyle{remark}
\newtheorem{rem}[thm]{Remark}
\DeclareRobustCommand{\cev}[1]{%
  {\mathpalette\do@cev{#1}}%
}
\newcommand{\do@cev}[2]{%
  \vbox{\offinterlineskip
    \sbox\z@{$\m@th#1 x$}%
    \ialign{##\cr
      \hidewidth\reflectbox{$\m@th#1\vec{}\mkern4mu$}\hidewidth\cr
      \noalign{\kern-\ht\z@}
      $\m@th#1#2$\cr
    }%
  }%
}
\title{On the Severi problem in arbitrary characteristic}
\author{Karl Christ, Xiang He and Ilya Tyomkin}
\thanks{IT is partially supported by the Israel Science Foundation (grant No. 821/16). KC is supported by the Israel Science Foundation (grant No. 821/16) and by the Center for Advanced Studies at BGU. XH was supported by the ERC Consolidator Grant 770922 - BirNonArchGeom.}
\address[Christ]{$~^1$Department of Mathematics\\
	Ben-Gurion University of the Negev\\P.O.Box 653 \\Be'er Sheva\\ 84105\\  Israel and $~^2$Institute of Algebraic Geometry\\
	Leibniz University Hannover\\Welfengarten 1 \\30167 Ha\-no\-ver\\  Germany }\email{kchrist@math.uni-hannover.de}
\address[He]{$~^1$Yau Mathematical Sciences Center, Ningzhai, Tsinghua University, Haidian District, Beijing, China, 100084 and $~^2$Einstein Institute of Mathematics\\
	The Hebrew University of Jerusalem\\ Giv'at Ram\\ Jerusalem\\ 91904\\ Israel}\email{xianghe@mail.tsinghua.edu.cn}
\address[Tyomkin]{Department of Mathematics\\
	Ben-Gurion University of the Negev\\P.O.Box 653 \\Be'er Sheva\\ 84105\\  Israel }\email{tyomkin@math.bgu.ac.il}
\begin{document}

\begin{abstract}
In this paper we show that Severi varieties parameterizing irreducible reduced planar curves of given degree and geometric genus are either empty or irreducible in any characteristic. Following Severi's original idea, this gives a new proof for the irreducibility of the moduli space of smooth projective curves of given genus in positive characteristic. It is the first one that does not involve a reduction to the characteristic zero case. As a further consequence, we generalize Zariski’s theorem to positive characteristic and show that a general reduced planar curve of given geometric genus is nodal.
\end{abstract}
	
\maketitle
	
\setcounter{tocdepth}{1}
\tableofcontents

\section{Introduction}\label{sec:introduction}
In the current paper, we study the geometry of Severi varieties over an algebraically closed field $K$ of arbitrary characteristic. Recall that the {\em Severi variety} $V_{g,d}$ is defined to be the locus of degree $d$ reduced plane curves of geometric genus $g$ in the complete linear system $|\CO_{\PP^2}(d)|$. We denote by $V_{g,d}^\irr$ the union of the irreducible components parameterizing irreducible curves. The main goal of this paper is to prove that the Severi varieties $V_{g,d}^\irr$ are either empty or irreducible.

These varieties were introduced by Severi in 1921, in order to prove the irreducibility of the moduli space $M_g$ of genus $g$ compact Riemann surfaces \cite{Severi}. He noticed that for $d$ large enough, there exists a natural surjective map from $V_{g,d}^\irr$ to $M_g$, since any Riemann surface of genus $g$ admits an immersion as a plane curve of degree $d$. Therefore, the irreducibility of $M_g$ follows once one proves that $V_{g,d}^\irr$ itself is irreducible.

The irreducibility (or rather connectedness) of $M_g$ was first asserted by Klein in 1882, who deduced it from the connectedness of certain spaces parameterizing branched coverings of the Riemann sphere, nowadays called {\em Hurwitz schemes}, see \cite{Klein,Hur91,EC18,Ful69}. Severi's goal was to provide an algebraic treatment of Klein's assertion. However, his proof of irreducibility of $V_{g,d}^\irr$ contained a gap, and the question, whether Severi varieties are irreducible, remained open for more than 60 years.

In 1969, Deligne and Mumford proved the irreducibility of the moduli space $\CM_g$ of smooth projective genus $g$ curves in arbitrary characteristic \cite{DM69}; see also \cite{Ful69}. The proof uses reduction to characteristic zero, and Teichm\"uller theory over $\mathbb C$. As a result, it is based on transcendental methods. Only in 1982, Fulton gave a purely algebraic proof of the irreducibility of $\CM_g$ in an appendix to the paper \cite{HM82} of Harris and Mumford, who constructed a good compactification of Hurwitz schemes. However, Fulton's proof still proceeds by a reduction to the characteristic zero case.

Although the question of irreducibility of $\CM_g$ was settled, the Severi problem remained open, and attracted quite a lot of attention. In 1982, Zariski gave a dimension-theoretic characterization of Severi varieties in characteristic zero \cite{Zar82}. Namely, he proved that if $V\subseteq |\CO_{\PP^2}(d)|$ parameterizes reduced curves of geometric genus $g$, then the dimension of $V$ is bounded by $3d+g-1$. Moreover, if equality holds, then a general curve parameterized by $V$ is necessarily nodal. 
An alternative proof of this result can be given using the techniques developed by Arbarello and Cornalba  \cite{AC81, AC81it}. Both approaches, however, fail in positive characteristic. 

Soon after, Harris settled the Severi problem in characteristic zero \cite{Har86}.
Harris' proof of irreducibility follows the general line of Severi's approach. He first proves that any component of $V_{g,d}^\irr$ contains a rational nodal curve, and then shows that there is a unique such component. The second step is relatively simple, and uses a monodromy argument. However, the first step is rather involved: it uses Zariski's theorem, a careful analysis of degenerations of curves, and a remarkable trick based on the study of the deformation spaces of tacnodes.

In positive characteristic, much less was known about the Severi problem and Zariski's theorem. In 2013, the third author proved that the bound in Zariski's theorem holds true in arbitrary characteristic. However, the assertion about the nodality of a general curve of genus $g$ was shown to be wrong, at least in the case of curves on certain weighted projective planes; see \cite{Tyo13}. In the planar case, however, it remained an intriguing open question, which we also settle in the current paper. Notice that in characteristic zero, both parts of Zariski's theorem generalize naturally to many rational surfaces, in particular to weighted projective planes, see, e.g., \cite{KS13}.

\subsection{The main result} Let $d\ge 1$ and $1-d\le g\le \binom{d-1}{2}$ be integers. We first prove that $V_{g,d}$ is locally closed (Lemma~\ref{lem:constr}), and %its closure $\overline V_{g,d}$ in the linear system $|\CO_{\PP^2}(d)|$ is 
equidimensional of pure dimension $3d+g-1$ (Proposition~\ref{prop:severi variety dimension}). The main result is then the following theorem, which settles the Severi problem in arbitrary characteristic.
\begin{Thm}\label{thm:irreducibility of severi variety}
Let $d\ge 1$ and $0\le g\le \binom{d-1}{2}$ be integers. Then the Severi variety $V_{g,d}^{\irr}$ is irreducible of dimension $3d+g-1$ over any algebraically closed ground field.
\end{Thm}
On the way to proving the Main Theorem, we establish Zariski's theorem in arbitrary characteristic (Corollary~\ref{cor:zariski}):

\begin{ZThm}
Let $V \subset V_{g, d}$ be an irreducible subvariety. Then,
\begin{enumerate}
\item $\dim(V)\le 3d+g-1$, and
\item if $\dim(V)=3d+g-1$, then for a general $[C]\in V$, the curve $C$ is nodal.
\end{enumerate}
\end{ZThm}

Note that unlike in the proof of Harris in characteristic zero, we do not use Zariski's theorem to establish the degeneration part of the argument, but rather obtain Zariski's theorem as a consequence of our main result about degenerations (Theorem~\ref{thm:cont_lines}).

As an immediate corollary of the main result, we obtain the first proof of the irreducibility of the moduli spaces $\CM_g$ in arbitrary characteristic that involves {\em no} reduction to characteristic zero. Indeed, since there is a dense open subset $U$ of $V_{g,d}^{\irr}$ that parameterizes nodal curves, the universal family of curves over $U$ is equinormalizable, and hence induces a map $U\to \CM_g$. However, since for any $d$ large enough, any smooth genus $g$ curve admits a birational immersion as a planar degree $d$ curve, it follows that the map $U\to \CM_g$ is dominant. Thus, the irreducibility of $\CM_g$ follows from the Main Theorem in arbitrary characteristic.

\subsection{The techniques and the idea of the proof}
Our proof of the Main Theorem also follows the general strategy of Severi's original approach, but the tools we use are completely different and combine classical algebraic geometry and tropical geometry.

As a first step, we prove that the closure $\overline V\subseteq |\CO_{\PP^2}(d)|$ of any irreducible component $V\subseteq V_{g,d}$ contains $V_{1-d,d}$ (Theorem~\ref{thm:cont_lines}). We proceed by induction, i.e., we prove that $\overline V$ contains an irreducible component of $V_{g-1,d}$, and therefore a component of $V_{1-d,d}$. But the variety $V_{1-d,d}$ is irreducible since it parameterizes unions of $d$ distinct lines, which allows us to conclude that $\overline V$ contains the whole of $V_{1-d,d}$. This is the step where the new tools from tropical geometry are involved. To do tropical geometry, we assume that the ground field is the algebraic closure of a complete discretely valued field, which is harmless because the irreducibility property is stable under field extensions, and any field is a subfield of the algebraic closure of the field of Laurent power series with coefficients in the given field. 

To prove that $\overline V$ contains an irreducible component of $V_{g-1,d}$, we consider the intersection $Z$ of $V$ with the space of curves passing through $3d+g-2$ points in general position, chosen such that their tropicalizations are {\em vertically stretched}. By a careful analysis of the tropicalization of the corresponding one-parameter family of curves of genus $g$, we show that there exists $[C]\in \overline Z$ such that $C$ is a reduced curve of geometric genus $g-1$. Since such a $C$ passes through $3d+g-2$ points in general position, it follows that the intersection $\overline V\cap V_{g-1,d}$ has the same dimension as $V_{g-1,d}$ itself, and therefore contains one of its irreducible components.

%Using \cite{Tyo13}, we reduce the assertion to the existence of $[C]\in Z$ such that $C$ is a reduced curve of geometric genus $g-1$, which we prove by a careful analysis of the tropicalization of the corresponding one-parameter family of curves of genus $g$.

Our degeneration argument is new. It is based on the study of the tropicalizations of general one-parameter families of algebraic curves, and on the investigation of the properties of the induced map from the tropicalization of the base to the moduli space of parameterized tropical curves $\alpha\:\Lambda\to M_{g, n, \nabla}^\trop$. We prove that $\alpha$ is a piecewise integral affine map, and in good cases, it satisfies the {\em harmonicity} and {\em local combinatorial surjectivity} properties, see Definition~\ref{def:harloccombsurj} and Theorem~\ref{thm:paramtrop}. Roughly speaking, what these properties allow us to do is to describe the intersection of the image of $\alpha$ with two types of strata of $M_{g, n, \nabla}^\trop$: the {\em nice strata} -- parameterizing weightless tropical curves whose underlying graph is $3$-valent, and the {\em simple walls} -- parameterizing such curves but with a unique $4$-valent vertex. In particular, we show that for a nice stratum, the image of $\alpha$ is either disjoint from it or intersects it along a straight interval, whose boundary does not belong to the stratum. For a simple wall, we show that the image of $\alpha$ is either disjoint from it or intersects {\em all} nice strata in the star of the wall.

Next, we use these general properties to reduce the assertion of Theorem~\ref{thm:cont_lines} to a combinatorial game with floor decomposed tropical curves, the latter being a very convenient tool in tropical geometry introduced by Brugall\'e and Mikhalkin \cite{BM08}. The goal of the game is to prove that the image of $\alpha$ necessarily intersects a nice stratum parameterizing tropical curves having a contracted edge of varying length. Indeed, if this is the case, then $\Lambda$ contains a leg parameterizing tropical curves for which the length of the contracted edge is growing to infinity. Since the legs of $\Lambda$ correspond to marked points of the base, it follows that there is a closed point $[C]\in\overline Z$, such that the generalized tropicalization of the normalization of $C$ has an edge of infinite length, i.e., $C$ has geometric genus less than $g$. With a minor extra effort we show that $C$ is necessarily reduced, see Step 3 in the proof of Theorem~\ref{thm:cont_lines}.

On the tropical side of the game, we show that for a point $q\in \Lambda$ corresponding to a tropical curve passing through $3d+g-1$ vertically stretched points, the stratum $M_\Theta$ of $M_{g, n, \nabla}^\trop$ containing $\alpha(q)$ is nice, and the boundary of the interval ${\rm Im}(\alpha)\cap M_\Theta$ belongs to simple walls adjacent to $M_\Theta$. We then use the {\em floor-elevator} structure of floor decomposed curves to describe the nice strata adjacent to these simple walls, and show that by traveling along the nice strata and by crossing the corresponding simple walls, we can find a leg of $\Lambda$ such that $\alpha$ maps it to a nice stratum parameterizing tropical curves having a contracted edge of varying length, see Lemma~\ref{lem:keylemma}, and the figures within its proof.

To complete the proof of the Main Theorem we follow the ideas of \cite{Tyo07}, and consider the so called {\em decorated Severi varieties} $U_{d,\delta}$ that parameterize reduced curves with $\delta:=\binom{d-1}{2}-g$ marked nodes. We prove that these varieties are smooth, equidimensional, and that the natural map $\overline U_{d,\delta}\to |\CO_{\PP^2}(d)|$ is generically finite and has  $\overline V_{g,d}$ as its image. Finally, we show that the variety $U^\irr_{d,\delta}$ parameterizing irreducible reduced curves of degree $d$ with $\delta$ marked nodes is irreducible (Theorem~\ref{thm:irrdecsev}), which implies the Main Theorem.

The analysis of the decorated Severi varieties allows us to generalize the well-known description of the local geometry of the closure of the Severi variety $\overline V_{g,d}$ along $V_{1-d,d}$ to the case of arbitrary characteristic (Theorem~\ref{thm:branchstr}). We show that as in the case of characteristic zero, at a general $[C_0]\in V_{1-d,d}$, the variety $\overline V_{g,d}$ consists of smooth branches parameterized by all possible subsets $\mu$ of $\delta$ nodes of $C_0$. Furthermore, the intersections of branches are reduced, have expected dimension, and the branch corresponding to $\mu$ belongs to $\overline V^\irr_{g,d}$ if and only if $C_0\setminus\mu$ is connected.

\subsection{Generalizations and applications}
The ideas and the techniques introduced in this paper admit a variety of generalizations and applications, some of which we investigate in our follow-up papers. In particular, our Key Lemma (Lemma~\ref{lem:keylemma}) is a new tool providing a good control over the skeletons of curves and the way they vary as curves move in tropically general one-parameter families. Therefore we expect that further applications of our tropical machinery to the study of families of curves will be found in the near future. Let us describe two generalizations we already explored.

A natural generalization considered in \cite{CHT20b} is the case of  polarized toric surfaces corresponding to {\em $h$-transverse polygons}, e.g., Hirzebruch surfaces. These are the types of polygons for which the technique of floor-decomposed curves applies. It turns out that if a polygon $\Delta$ is $h$-transverse, then any component of the Severi variety $V^\irr_{g,\Delta}$ contains $V^\irr_{0,\Delta}$ in its closure. However, unlike in the planar case, to prove this in full generality, one has to deal with walls corresponding to elliptic components. This brings the geometry of the moduli spaces of elliptic curves with a level structure into play, and restricts the validity of the results for the general $h$-transverse polygons to characteristic zero (or large enough characteristic). In some cases, such as Hirzebruch surfaces, one can avoid elliptic walls, and prove the degeneration result in arbitrary characteristic. The degeneration result combined with the recent monodromy result of Lang \cite{lang20} gives rise to the irreducibility of the Severi varieties on a majority of such surfaces. Notice, however, that there are examples of reducible Severi varieties even on polarized toric surfaces corresponding to certain $h$-transverse polygons, see \cite{Tyo14} for a simplest example of this sort.

In \cite{CHT21b}, we make another step forward, and use a modification of the methods developed in the current paper to deduce the irreducibility of Hurwitz schemes in small positive characteristic. To the best of our knowledge, the latter has been an open question since Hurwitz schemes were introduced by Fulton in 1969 \cite{Ful69}. In particular, we develop new tropical tools that replace monodromy arguments in the proof of the irreducibility of Severi varieties.

\subsection{The structure of the paper}
The first two sections are preliminaries from algebraic and tropical geometry. In Section~\ref{sec: algebraic curves}, we define the Severi varieties and prove that they are equidimensional of expected dimension. In Section~\ref{sec: tropical curves}, we recall the notion of parameterized tropical curves, their moduli spaces, and floor decomposed curves.

Section~\ref{sec:tropicalizatoin of families of curves} is devoted to the study of the tropicalization of a general one-parameter family of smooth curves with a map to the projective plane. The results of this section are the main technical tool in the proof of the degeneration result (Theorem~\ref{thm:cont_lines}), which is proved in Section~\ref{sec:degeneration} together with the generalization of Zariski's theorem to the case of positive characteristic (Corollary~\ref{cor:zariski}).

The main result of Section~\ref{sec:local geometry of vgd} is Theorem~\ref{thm:branchstr}, which provides an explicit description of the local geometry of $\overline V_{g,d}$ along $V_{1-d,d}$. We also define and study the decorated Severi varieties in this section. Finally, in Section~\ref{sec:proofMT}, we prove the irreducibility of decorated Severi varieties, and deduce the Main Theorem.

\medskip
	
\noindent {\bf Acknowledgements.} We are grateful to Michael Temkin for helpful discussions and to an anonymous referee for many insightful comments that helped us to improve the presentation.
	
\section{Severi varieties} \label{sec: algebraic curves}
In this section, we recall the notion of Severi varieties on algebraic surfaces, and prove their basic properties. In particular, we show that in the case of toric surfaces, Severi varieties are locally closed, and their closures are either empty or equidimensional of expected dimension. Let $S$ be a projective surface over an algebraically closed ground field $K$, and $\mathcal L$ a line bundle on $S$. We write $[C] \in |\mathcal{L}|$ for the $K$-point in the complete linear system $|\mathcal{L}|$ parameterizing a curve $C$.

\begin{defn}
	The \emph{Severi varieties} $V_{g,\CL}$ and $V_{g,\CL}^{\irr}$ are defined to be the following loci in $|\mathcal L|$:
		\[
		V_{g,\CL} := \left\{[C] \in |\mathcal L|\, | \, C \text{ is reduced, } C\cap S^{\rm sing}=\emptyset,\, \pg(C) = g \right\},
		\]
where $\pg(C)$ denotes the geometric genus of $C$, and
		\[
		V_{g,\CL}^{\irr} := \left\{[C] \in V_{g,\CL}\, | \, C \text{ is irreducible}\right\}.
		\]
\end{defn}

We denote by $\overline V_{g,\CL}$ and $\overline V^\irr_{g,\CL}$ the closures in $|\mathcal L|$ of $V_{g,\CL}$ and $V^\irr_{g,\CL}$, respectively. Notice that since the locus of integral curves is open in the linear system, the variety $\overline V^\irr_{g,\CL}$ is a union of irreducible components of $\overline V_{g,\CL}$. If $(S,\CL)=\left(\PP^2,\CO_{\PP^2}(d)\right)$, we will often use the classical notation $V_{g,d}:=V_{g,\CL}$ and $V^\irr_{g,d}:=V^\irr_{g,\CL}$.

\begin{defn}
A family of curves $\cX \to B$ is called \emph{generically equinormalizable} if the normalization of the total space, $\cX^\nu \to B$, is a family of curves with smooth generic fiber.
\end{defn}
	
\begin{lem}{\cite[\S 5.10]{dJ96}} \label{lma:normalization}
Let $\cX \to B$ be a projective morphism of integral schemes with reduced curves as fibers. Then there exist maps
	\begin{equation} \label{diag:trop_comm}
		\xymatrix@=.5pc{
			\cX' \ar@{->} [rr] \ar@{->} [dd] && \cX \ar@{->} [dd]\\
			&&&&\\
			B' \ar@{->} [rr] && B\\
		}
    \end{equation}
such that $B' \to B$ is finite and surjective, $\cX'$ is an irreducible component of $(\cX \times_B B')_{\red}$ dominating $\cX$ and $\cX' \to B'$ is a generically equinormalizable family of reduced projective curves.
\end{lem}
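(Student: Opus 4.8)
The statement only concerns the behaviour at the generic point of $B$, so the first thing I would do is reduce it to a purely field-theoretic claim over the function field $F:=k(\eta)$, where $\eta\in B$ is the generic point. Since $\cX$ is integral, the fibre $\cX_\eta$ is an integral projective curve over $F$, and the claim is: \emph{there exists a finite field extension $F'/F$ such that the normalization of every irreducible component of $(\cX_\eta\otimes_F F')_{\red}$ is smooth over $F'$.} In characteristic zero the field $F$ is perfect, the normalization of $\cX_\eta$ is already a smooth projective curve over $F$, and one may take $F'=F$, $B'=B$, $\cX'=\cX$; so the content is entirely in characteristic $p$. There one passes to the perfect closure $F^{\mathrm{perf}}=\varinjlim_i F_i$, a filtered union of finite subextensions: the normalization of $(\cX_\eta\otimes_F F^{\mathrm{perf}})_{\red}$ is a finite disjoint union of normal projective curves over the perfect field $F^{\mathrm{perf}}$, hence smooth over it. Because reduction and normalization commute with filtered colimits of (excellent Noetherian) rings, and because smoothness is an open condition of finite presentation, this smoothness already holds over some finite stage $F':=F_i$. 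This is the one genuinely non-formal input, and it is classical.

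\textbf{Spreading out.} Given such an $F'$, I would let $B'\to B$ be a finite surjective morphism from an integral scheme with function field $F'$ — concretely, the normalization of $B$ in $F'$, which is finite over $B$ since $B$ is Nagata in all cases of interest (e.g.\ of finite type over a field). The base change $\cX\times_B B'$ is projective over $B'$ and surjects onto $\cX$, so some irreducible component $\cX'$ of $(\cX\times_B B')_{\red}$ dominates $\cX$ (and then necessarily dominates $B'$ as well, by finiteness of $B'\to B$). With its reduced structure $\cX'$ is integral and projective over $B'$, and it sits in a commutative square over $B'\to B$. Its fibre over the generic point $\eta'\in B'$ is one of the irreducible components of $(\cX_\eta\otimes_F F')_{\red}$ — here one uses that, for the localization $\eta'$, forming the fibre commutes with $(-)_{\red}$ — so by the choice of $F'$ its normalization is smooth over $F'=k(\eta')$.

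\textbf{Conclusion.} Normalization commutes with localization, hence the normalization $(\cX')^{\nu}\to\cX'$ restricts over $\eta'$ to the normalization of $\cX'_{\eta'}$, which we have just arranged to be smooth over $k(\eta')$. Thus $(\cX')^{\nu}\to B'$ has smooth generic fibre, i.e.\ the projective family $\cX'\to B'$ is generically equinormalizable; moreover its generic fibre $\cX'_{\eta'}$ is reduced, being integral, so its fibres are reduced curves over a dense open of $B'$, which is the remaining assertion. This gives the diagram and all the claimed properties.

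\textbf{The main obstacle.} The only real difficulty is positive characteristic: $k(\eta)$ can be imperfect, $\cX_\eta$ need not be geometrically reduced, the normalization of $\cX_\eta$ need not be smooth, and normalization does not commute with the (inseparable) base change $F'/F$. The entire purpose of the finite cover $B'\to B$ is to absorb this inseparability; once $F'$ is large enough for the normalization of $(\cX_\eta\otimes_F F')_{\red}$ to become smooth, smoothness is preserved by any further base change and everything else — choosing the dominating component, compatibility of normalization with restriction to $\eta'$, and checking the fibres are generically reduced curves — is routine bookkeeping. This is exactly what is carried out in \cite[\S5.10]{dJ96}.
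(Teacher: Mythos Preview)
The paper does not supply its own proof of this lemma: it is stated with a direct citation to \cite[\S 5.10]{dJ96} and used as a black box. Your proposal is correct and is precisely the argument de~Jong gives there --- reduce to the generic fibre, pass to the perfect closure to force the normalization to become smooth, descend to a finite subextension by a limit argument, and spread out by normalizing $B$ in the resulting finite extension $F'/F$ --- so there is nothing to compare against in the paper itself.
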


\begin{rem} \label{rem: equinormalizable}
In characteristic zero, any family $\cX\to B$ like in Lemma~\ref{lma:normalization} is generically equinormalizable by \cite[Page 80]{Tei80} and \cite{CL06}. This fails in characteristic $p$ as the classic example below shows. Furthermore, the map $B'\to B$ of Lemma~\ref{lma:normalization} may fail to be generically \'etale, which is one of the pitfalls in a na\"ive attempt to generalize the known characteristic-zero approaches to Zariski's theorem and Harris' theorem to the case of positive characteristic.
\end{rem}
	
\begin{ex}
In characteristic $p > 2$, consider the family over $\mathbb{A}^1_K = \Spec(K[t])$, given in an affine chart $\Spec(K[x, y, t])$ by $x^p + y^2 + t = 0$. It has normal total space isomorphic to $\mathbb{A}^2_K$. However, none of its fibers is smooth. In this case, the necessary base change in Lemma~\ref{lma:normalization} is given by adjoining $\sqrt[p]{t}$. In particular, the base change is nowhere \'etale.
\end{ex}
	
\begin{lem}\label{lem:constr}
The Severi varieties $V_{g,\CL}^{\irr}$ and $V_{g,\CL}$ are locally closed subsets of $|\CL|$.
\end{lem}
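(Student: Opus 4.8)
The plan is to exhibit $V_{g,\CL}$ (and then $V_{g,\CL}^\irr$) as the image of a constructible set under a morphism of finite type, and invoke Chevalley's theorem. First I would restrict attention to the open subscheme $U\subseteq|\CL|$ parametrizing reduced curves $C$ with $C\cap S^{\sing}=\emptyset$: the condition $C\cap S^{\sing}=\emptyset$ is open since $S^{\sing}$ is closed and the incidence locus $\{([C],s): s\in C\cap S^{\sing}\}$ is proper over $|\CL|$; reducedness is constructible (indeed the non-reduced locus is closed in the appropriate flattening stratification), so replacing $|\CL|$ by a suitable locally closed piece we may assume the universal curve $\CC\to U$ has reduced fibers. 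Now apply Lemma~\ref{lma:normalization} to each irreducible component of $U$: after a finite surjective base change $U'\to U$ we obtain a generically equinormalizable family $\CC'\to U'$ of reduced projective curves, i.e.\ its normalization $(\CC')^\nu\to U'$ has smooth generic fiber. The key point is that in such a family the function $[C]\mapsto \pg(C)$ is constructible on $U'$: the geometric genus equals $\dim_K H^1$ of the structure sheaf of the normalized fiber, which is upper semicontinuous along the flat locus, and stratifying $U'$ into locally closed pieces over which $(\CC')^\nu\to U'$ is flat (hence over which the fibers are the normalizations, by generic equinormalizability applied componentwise after further stratification) makes $\pg$ locally constant on each stratum. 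Therefore $\{[C]\in U': \pg(C)=g\}$ is constructible in $U'$, and since $U'\to U$ is finite surjective, Chevalley's theorem shows its image in $U$—which is exactly $V_{g,\CL}$—is constructible in $U\subseteq|\CL|$.

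For $V_{g,\CL}^\irr$ one adds the constraint that $C$ be irreducible. I would use that the locus of geometrically integral fibers in a projective family over a Noetherian base is constructible (this is standard: irreducibility and geometric connectedness of fibers are constructible conditions, e.g.\ by [EGA IV, 9.7.7] or by a direct Hilbert-polynomial/relative-Picard argument). Intersecting with the constructible set $V_{g,\CL}$ already produced yields $V_{g,\CL}^\irr$, which is then constructible.

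The main obstacle, and the reason the proof is not a one-liner, is precisely the failure in characteristic $p$ of generic equinormalizability for the tautological family over $|\CL|$—this is exactly the phenomenon flagged in the Remark after Lemma~\ref{lma:normalization}. Without it one cannot directly say that $\pg$ of the fiber is computed by cohomology of a flat family of normalizations, since fiberwise normalization need not commute with base change. The fix is to pay the price of the finite base change $U'\to U$ of Lemma~\ref{lma:normalization} (this is where dJ96 does the real work) and then pass back down via Chevalley; the only care needed is to run Lemma~\ref{lma:normalization} component-by-component of $U$ and to refine the resulting stratification so that on each piece the normalized total space is $B'$-flat, so that $H^1(\CO)$ of the fibers genuinely computes the geometric genus.
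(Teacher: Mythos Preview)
Your approach is essentially the same as the paper's: both restrict to the open locus of reduced curves, invoke Lemma~\ref{lma:normalization} (de Jong) to obtain a generically equinormalizable family after a finite base change, and then stratify so that the geometric genus is constant on each piece. The paper makes the noetherian induction explicit---find a dense open where the genus is constant, pass to the complement, and repeat---whereas you package the same thing as ``$\pg$ is a constructible function on $U'$'' and push down by Chevalley; these are the same argument in different clothing.

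One imprecision worth flagging: flatness of $(\CC')^\nu\to U'$ over a stratum does \emph{not} by itself guarantee that the fibers there are the normalizations of the original fibers. Generic equinormalizability only gives this over a dense open of $U'$; on the closed complement one must re-apply Lemma~\ref{lma:normalization} and iterate. Your parenthetical ``after further stratification'' gestures at this, but the iteration is the actual content of the argument, and the paper spells it out. For $V_{g,\CL}^\irr$ the paper simply observes that the locus of irreducible curves is \emph{open} in $|\CL|$, which is sharper than the constructibility you cite from EGA, though either suffices.
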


\begin{proof}
Since $V_{g,\CL}^{\irr}$ is the intersection of $V_{g,\CL}$ with the open locus of irreducible curves in $|\mathcal L|$, it is sufficient to prove the assertion for $V_{g,\CL}$. Since the universal curve $\cX_{|\CL|} \to |\CL|$ is flat and proper, the set $U_1 := \left\{[C] \in |\CL| | \, C \text{ is reduced } \right\} $ is open in  $|\CL|$; see \cite[\href{https://stacks.math.columbia.edu/tag/0C0E}{Tag~0C0E}]{stacks-project}.

Let us first show that $V_{g,\CL}$ is constructible. We can inductively define a finite stratification of $U_1$ by geometric genera as follows: Consider the restriction $\cX_{U_1} \to U_1$ of the universal curve to $U_1$. Assume first, that the total space $\cX_{U_1}$ is irreducible. Let $\cX_{U_1}' \to U_1'$ be a family associated to $\cX_{U_1} \to U_1$ as in Lemma~\ref{lma:normalization}. Then there is a dense open subset $V_1'$ of $U'_1$ over which $\cX_{U_1}' \to U'_1$ has constant geometric genus. Since the map $U_1'\to U_1$ is finite, the image $Z$ of the complement of $V_1'$ is closed and nowhere dense in $U_1$. Set $V_1:=U_1\setminus Z$. Then $V_1$ is open and dense in $U_1$. Furthermore, $\cX_{U_1} \to U_1$ has constant geometric genus over $V_1$ since so does $\cX_{U_1}' \to U'_1$ over $V_1'$, and the pullback of $V_1$ is a subset of $V_1'$. If the total space $\cX_{U_1}$ is reducible, we apply the above reasoning to each irreducible component of $\cX_{U_1}$ separately, and define $V_1$ to be the intersection of the dense open sets obtained in this way.

For an irreducible component $U_2$ of the complement $U_1 \setminus V_1$, the same procedure gives rise to a dense open subset $V_2 \subset U_2$ such that the restriction of $\cX_{U_2} \to U_2$ to $V_2$ has fibers of constant geometric genus. Repeating this process for all irreducible components of the $U_i \setminus V_j$, gives a finite stratification of $U_1$ by the locally closed $V_j$'s. By construction, each $V_{g,\CL}$ is a union of $V_j$'s, and hence a constructible subset of $|\CL|$ as asserted.

It remains to prove that the constructible subsets $\bigcup_{g'\le g}V_{g',\CL}$ and $\bigcup_{g'<g}V_{g',\CL}$ are closed in $U_1$, since then $V_{g,\CL}$ is locally closed. Let $R$ be a discrete valuation ring, and $\phi\:\Spec(R)\to U_1$ a map taking the generic point $\eta\in\Spec(R)$ to $[C_\eta]\in V_{g',\CL}$ for some $g'\le g$. After a quasi-finite base change, we may assume that the normalization of $C_\eta\to\eta$ is smooth and admits a stable model over $\Spec(R)$. Let $[C]\in U_1$ be the image of the closed point of $\Spec(R)$. Then $C$ is a reduced curve dominated by a semi-stable reduction of the normalization $C^\nu_\eta$. Thus, $p_g(C)\le p_g(C_\eta)=g'$. Therefore $\bigcup_{g'\le g}V_{g',\CL}$ and $\bigcup_{g'<g}V_{g',\CL}$ are closed in $U_1$ by the valuative criterion \cite[\href{https://stacks.math.columbia.edu/tag/0ARK}{Tag~0ARK}]{stacks-project}, and we are done.
\end{proof}

In general, Severi varieties may have components of different dimensions; see \cite{CC99}. However in the case of toric surfaces, they are equidimensional as the following proposition shows:	
	
\begin{prop}\label{prop:severi variety dimension}
If $S$ is a toric surface, then the Severi varieties $V^\irr_{g,\CL}$ and $V_{g,\CL}$ are either empty or equidimensional of dimension $-\CL.K_S+g-1$. Furthermore, a general $[C]\in V_{g,\CL}$ corresponds to a curve $C$ that intersects the boundary divisor transversally.
\end{prop}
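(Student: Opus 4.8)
The plan is to establish, for every irreducible component $V$ of $\overline V_{g,\CL}$ (the assertion being vacuous if $\overline V_{g,\CL}=\emptyset$), the two bounds $\dim V\le -\CL.K_S+g-1$ and $\dim V\ge -\CL.K_S+g-1$, and then to treat transversality to the boundary separately; since $\overline V^\irr_{g,\CL}$ is a union of irreducible components of $\overline V_{g,\CL}$, the statement for it follows. A general point $[C]$ of $V$ lies in $V_{g,\CL}$, which is dense in $\overline V_{g,\CL}$ and constructible by Lemma~\ref{lem:constr}; in particular $C$ is reduced, of geometric genus $g$, and disjoint from $S^\sing$.

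For the upper bound I would invoke the dimension estimate of \cite{Tyo13}: on a toric surface, in arbitrary characteristic, every locally closed subset of $|\CL|$ parametrizing reduced curves of geometric genus $g$ has dimension at most $-\CL.K_S+g-1$. (On a toric surface this can alternatively be obtained by parametrizing $V$, up to a generically finite map, by the normalization data of a general member --- a smooth source curve $\tilC$, the effective divisors $f^*D_\rho$ on it, and the monomial functions $\chi\circ f$ --- and bounding the dimension of an incidence variety of symmetric products of $\tilC$ cut out, inside a family of Jacobians, by the requirement that the relevant integral combinations of the $f^*D_\rho$ be principal.) This is the one genuinely hard point: in characteristic zero, or whenever the normalization map of a general member of $V$ is unramified, the bound is elementary, since $\dim V$ is then at most the dimension of the space of first-order deformations of the normalization map, which by Riemann--Roch equals $-\CL.K_S+g-1$; but in general the general member of $V$ need not be immersed, the tangent space of $V$ at its general point may exceed $\dim V$, and one must argue by tropical degeneration as in \cite{Tyo13}.

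For the lower bound I would use deformation theory of the normalization map. Fix $[C]\in V$ general; its normalization is a morphism $f\colon\tilC\to S_\reg$ from a smooth (possibly disconnected) curve, birational onto its image. By Lemma~\ref{lma:normalization}, a finite surjective base change makes the family of curves over $V$ generically equinormalizable, so the normalizations assemble into a map --- injective on a dense open --- from a component of the cover to the space of morphisms $\tilC\to S_\reg$, landing in a single irreducible component $\mathcal M_0$; this $\mathcal M_0$ contains $[f]$ and dominates $V$ via $f'\mapsto[\overline{\operatorname{im}f'}]$ with finite general fibers, so $\dim V=\dim\mathcal M_0$. Standard deformation theory of maps gives $\dim\mathcal M_0\ge\chi(\tilC,N_f)$, where $N_f=\operatorname{coker}(T_{\tilC}\xrightarrow{df}f^*T_{S_\reg})$. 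A componentwise Riemann--Roch computation from $0\to T_{\tilC}\to f^*T_{S_\reg}\to N_f\to 0$ --- using $\deg f^*T_{S_\reg}=-\CL.K_S$ and the relation $\pg(C)=1-\chi(\CO_{\tilC})=g$, which expresses $g$ through the number and genera of the components of $\tilC$ --- gives $\chi(\tilC,N_f)=-\CL.K_S+g-1$. Hence $\dim V\ge -\CL.K_S+g-1$; combined with the upper bound, $\dim V=-\CL.K_S+g-1$, so $\overline V_{g,\CL}$ and $\overline V^\irr_{g,\CL}$ are equidimensional of this dimension.

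Finally, for transversality, let $W\subseteq V$ be the closed locus of curves not meeting $\partial S=\sum_\rho D_\rho$ transversally, and suppose $W=V$. Then the general member of $V$ either passes through a torus-fixed point $P$ or is tangent to some $D_\rho$. If it passes through $P$, then $P\in S_\reg$ because $C\cap S^\sing=\emptyset$; blowing up $P$ yields a toric surface $\operatorname{Bl}_PS$, and taking strict transforms embeds $V$ into a Severi variety of genus $g$ curves in a class $\sigma^*\CL-mE$ with $m\ge1$ on $\operatorname{Bl}_PS$, which by the upper bound has dimension at most $-\CL.K_S-m+g-1<\dim V$ --- a contradiction. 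The tangency case is handled similarly: either blow up the (generically varying) tangency point and iterate, or, more cleanly on a toric surface, use the parametrization by normalization data together with the fact that a general divisor in a base-point-free linear system on a curve is reduced, to conclude that the locus of curves in $V$ tangent to a fixed $D_\rho$ is a proper closed subset --- again contradicting $W=V$. So a general $[C]\in V$ meets $\partial S$ transversally. I expect the main obstacle to be precisely the upper bound imported from \cite{Tyo13}, together with verifying that it holds in the stated toric generality; the deformation-theoretic lower bound and the transversality count are then routine, although the bookkeeping with disconnected normalizations and with incidence conditions requires some care.
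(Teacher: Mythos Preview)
Your overall architecture matches the paper's: the upper bound is imported from \cite[Theorem~1.2]{Tyo13}, and the lower bound comes from the deformation theory of the normalization map (the paper packages this via the Kontsevich space $\CM_{g,n}(S,|\CL|)$ rather than a Hom scheme, but the content is the same Riemann--Roch computation $\chi(\CN_f)=-\CL.K_S+g-1$). One minor point: the paper first splits a general $[C]\in V$ into its irreducible components $C_1,\dotsc,C_m$ and applies \cite{Tyo13} to each $\overline V_{g_i,\CL_i}$ separately, summing to get the bound; you cite the bound directly for possibly reducible curves, so you should check that \cite[Theorem~1.2]{Tyo13} is stated in that generality, or insert the same reduction.

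The substantive discrepancy is in the transversality argument. The paper does \emph{not} argue separately: it uses that \cite[Theorem~1.2]{Tyo13} gives a \emph{strict} inequality $\dim_{[C_i]}(\overline V_{g_i,\CL_i})<-\CL_i.K_S+g_i-1$ whenever $C_i$ meets the boundary non-transversally, and this strictness is immediately incompatible with the lower bound just established. Your separate argument runs into trouble in the tangency case. Blowing up at the tangency point $P\in D_\rho$ does not produce a toric surface unless $P$ is torus-fixed, so the upper bound from \cite{Tyo13} is not available on the blow-up; and your ``cleaner'' alternative---that a general member of a base-point-free linear system on a curve is reduced---fails in positive characteristic (a Frobenius-twisted pencil on $\PP^1$ has every member non-reduced), which is exactly the setting the proposition must cover. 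The torus-fixed-point case of your argument is fine, but for tangency you should instead invoke the strict-inequality clause of \cite{Tyo13}, as the paper does.
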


\begin{proof}
Since $V^\irr_{g,\CL}$ is a union of irreducible components of $V_{g,\CL}$, it is sufficient to prove the assertion for $V_{g,\CL}$. Let $V$ be an irreducible component of $V_{g,\CL}$, and $[C] \in V$ general. Suppose $C$ has $m$ irreducible components $C_1,\dotsc, C_m$. Set $g_i:=\pg(C_i)$ and $\CL_i:=\CO_S(C_i)$, and consider the map $\prod_{i=1}^m V_{g_i, \CL_i}\to \overline{V}_{g,\CL}$. Its fiber over $[C]$ is finite, and the map is locally surjective. Thus, $\dim_{[C]}(V)=\sum_{i=1}^m\dim_{[C_i]}(V_{g_i, \CL_i})$. However, $\dim_{[C_i]}(V_{g_i, \CL_i})\le -\CL_i.K_S+g_i-1$ by \cite[Theorem~1.2]{Tyo13}, and hence
$$\dim_{[C]}(V)\le\sum_{i=1}^m\left(-\CL_i.K_S+g_i-1\right)=-\CL.K_S+g-1.$$		
Furthermore, if $C$ intersects the boundary divisor non-transversally, then the inequality is strict. Next, let us show the opposite inequality, which will imply both assertions of the proposition.

Let $X$ be the normalization of $C$, and $f\:X\to S$  the corresponding map. It defines a point $[X, f]$ in the Kontsevich moduli space $\CM_{g,n}\left (S, |\CL|\right )$ of stable maps to $S$ with image in $|\CL|$; see \cite{Kon95} and \cite[Theorem~5.7]{JHS}. Since $f$ is birational onto its image, it is automorphism-free, and hence $\CM_{g,n}\left (S, |\CL|\right )$ is a quasi--projective scheme in a neighborhood of $[X, f]$. Let $(R,\fm)$ be the formal completion of the algebra of functions on $\CM_{g,n}\left (S, |\CL|\right )$ at $[X, f]$. By \cite[Th\'eor\`eme III.2.1.7]{Ill71}, the tangent space to $\CM_{g,n}\left (S, |\CL|\right )$ at $[X, f]$ is given by $\left(\fm/\fm^2\right)^*=H^0(X, \mathcal N_{f})$, where $\mathcal N_{f}=\mathrm{Coker}(T_X\to f^*T_{S})$ is the normal sheaf to $f$, and the obstruction space is a subspace in $H^1(X, \mathcal N_{f})$.

Let us express the algebra $(R,\fm)$ as a quotient of a ring of formal power series $(A,\fn)$ by an ideal $I\subset \fn$ such that $\fn/\fn^2\simeq\fm/\fm^2$. Then, by a standard argument in deformation theory, the ideal $I$ is generated by at most $h^1(X, \mathcal N_{f})$ elements, and hence the dimension of any local germ of $\CM_{g,n}\left (S, |\CL|\right )$ at $[X,f]$ is bounded from below by $\chi(X, \CN_{f})$; cf. \cite[Proposition~3]{Mori79}. Since $c_1(\CN_f)=-\CL.K_{S}+2g-2$, it follows from the Riemann-Roch theorem that
$$\dim_{[X,f]}\CM_{g,n}\left (S, |\CL|\right )\ge \chi(X, \CN_{f}) = c_1(\CN_{f}) + 1 - g=-\CL.K_S+g-1.$$
		
Consider the projection from a small neighborhood of $[X,f]$ to $|\mathcal L|$ given by sending a point $[X',f']$ to $[f'(X')]$. Since the fiber over $[C]$ is finite, it follows that
$$\dim_{[C]}(V)\ge \dim_{[X,f]}\CM_{g,n}\left (S, |\CL|\right )\ge -\CL.K_S+g-1,$$ and we are done.
\end{proof}

\begin{rem}\label{rem:dimvgd}
If $M$ is the lattice of monomials of $S$, and $(S,\CL)$ is the polarized toric surface corresponding to a convex lattice polygon $\Delta\subset M_\RR$, then $-\CL.K_S=|\partial \Delta \cap M|$, and hence the dimension of the Severi varieties can be expressed combinatorially in terms of $\Delta$ and $g$. In particular, $\dim(V_{g,d})=3d+g-1$.
\end{rem}

\section{Tropical curves} \label{sec: tropical curves}
In this section, we review the theory of (parameterized) tropical curves, and recall the notions of floor decomposed curves and moduli spaces of parameterized tropical curves. We mainly follow \cite{Mik05, GM07, BM08, Tyo12, ACP}, to which we refer for further details. We also discuss families of parameterized tropical curves, cf. \cite{CCUW,Ran19}, and introduce the notions of harmonicity and local combinatorial surjectivity of the induced map to the moduli space. Throughout the section, $M$ and $N$ denote a pair of dual lattices.

\subsection{Families and parameter spaces for tropical curves}
\subsubsection{Abstract tropical curves} The graphs we consider have half-edges, called {\em legs}. A {\em tropical curve} is a weighted metric graph $\Gamma=(\mathbb G,\ell)$ with ordered legs, i.e., $\mathbb G$ is a (connected) graph with ordered legs equipped with a {\em weight (or genus) function} $g\colon V(\mathbb G)\rightarrow \mathbb Z_{\geq 0}$, and a {\em length function} $\ell\colon E(\mathbb G)\rightarrow \mathbb R_{>0}$. Here $V(\GG)$ and $E(\GG)$ denote the set of vertices and edges of $\GG$, respectively. We denote by $L(\GG)$ the set of legs of $\GG$ and extend $\ell$ to legs by setting their length to be infinite. Set $\overline E(\mathbb G) := E(\GG) \cup L(\GG)$. We will view tropical curves as polyhedral complexes by identifying the edges of $\GG$ with bounded closed intervals of corresponding lengths in $\RR$ and identifying the legs of $\GG$ with semi-bounded closed intervals in $\RR$.
	
For $e\in \overline E(\mathbb G)$ we denote by $e^\circ$ the interior of $e$, and use $\vec e$ to indicate a choice of orientation on $e$. If $e\in L(\GG)$ is a leg, then it will {\em always} be oriented away from the vertex. Bounded edges will be considered with both possible orientations. For $v\in V(\mathbb G)$, we denote by $\Star(v)$ the {\em star} of $v$, i.e., the collection of oriented edges and legs having $v$ as their tail. In particular, $\Star(v)$ contains two edges for every loop based at $v$. The number of edges and legs in $\Star(v)$ is called the {\em valency} of $v$, and is denoted by $\valence(v)$. By abuse of notation, we will often identify $\Star(v)$ with its geometric realization -- an oriented tree with root $v$, all of whose edges are leaves. %Notice however, that the natural map from the geometric realization of $\Star(v)$ to $\GG$ need not be injective.

The {\em genus} of $\Gamma$ is defined to be $g(\Gamma) = g(\GG) :=1-\chi(\mathbb G)+\sum_{v\in V(\mathbb G)}g(v)$, where $\chi(\GG):=b_0(\GG)-b_1(\GG)$ denotes the Euler characteristic of $\GG$. Finally, a tropical curve $\Gamma = (\GG, \ell)$ is called {\em stable} if so is $\GG$, that is, if every vertex of weight zero has valency at least three, and every vertex of weight one has valency at least one.
	
\subsubsection{Parameterized tropical curves}

A {\em parameterized tropical curve} is a pair $(\Gamma,h)$, where $\Gamma=(\mathbb G,\ell)$ is a tropical curve, and $h\colon \Gamma\rightarrow N_\mathbb R$ is a map such that:
	
(a) for any $e\in \overline E(\mathbb G)$, the restriction $h|_e$ is an integral affine function; and	
	
(b) (Balancing condition) for any vertex $v\in V(\mathbb G)$, we have $\sum_{\vec e\in\Star(v)}\frac{\partial h}{\partial \vec e}=0.$
	
\noindent Note that the slope $\frac{\partial h}{\partial \vec e}\in N$ is not necessarily primitive, and its integral length is the stretching factor of the affine map $h|_e$. We call it the {\it multiplicity} of $h$ along $\vec e$. In particular, $\frac{\partial h}{\partial \vec e} = 0$ if and only if $h$ contracts $e$. A parameterized tropical curve $(\Gamma, h)$ is called {\em stable} if so is $\Gamma$. Its {\em combinatorial type} $\Theta$ is defined to be the weighted underlying graph $\mathbb G$ with ordered legs equipped with the collection of slopes $\frac{\partial h}{\partial \vec e}$ for $e \in \overline E(\GG)$. We define the {\em extended degree} $\overline\nabla$ to be the sequence of slopes $\left(\frac{\partial h}{\partial \vec l}\right)_{l\in L(\GG)}$, and the {\em degree} $\nabla$ to be the subsequence $\left(\frac{\partial h}{\partial \vec l_i}\right)$, where $l_i$'s are the legs not contracted by $h$. The (extended) degree clearly depends only on the combinatorial type $\Theta$. An {\em isomorphism} of parameterized tropical curves $h\: \Gamma \to N_\RR$ and $h'\: \Gamma' \to N_\RR$ is an isomorphism of metric graphs $\varphi\: \Gamma \to \Gamma'$ such that $h = h' \circ \varphi$. Similarly, an {\em isomorphism} of combinatorial types $\Theta$ and $\Theta'$ is an isomorphism of the underlying graphs respecting the order of the legs, the weight function, and the slopes of the edges and legs. We denote the group of automorphisms of a combinatorial type $\Theta$ by $\mathrm{Aut}(\Theta)$, and the isomorphism class of $\Theta$ by $[\Theta]$.

\subsubsection{Families of tropical curves}\label{subsubsec:families}
Next we define families of (parameterized) tropical curves; cf. \cite[\S 3.1]{CCUW} and \cite[\S 2.2]{Ran19}. We will adapt these more general definitions to the special case we need -- families of (parameterized) tropical curves over a tropical curve $\Lambda$ without stacky structure. Since tropical curves are nothing but parameterized tropical curves with $N=0$, we discuss only families of parameterized tropical curves.

Let $\Lambda=(\mathbb G_\Lambda,\ell_\Lambda)$ be a tropical curve \textit{without loops}. Loosely speaking, a family of parameterized tropical curves over $\Lambda$ is a continuous family such that for any $e\in E(\GG_\Lambda)$, the combinatorial type of the fiber is constant along $e^\circ$, and the lengths of the edges of the fibers, as well as the images of the vertices of the fibers in $N_\RR$, form integral affine functions on $e$. To define this notion formally, consider a datum (\dag) consisting of the following:

\begin{itemize}
\item an extended degree $\overline\nabla$;
\item a combinatorial type $\Theta_e=\left(\GG_e,(\frac{\partial h}{\partial\vec\gamma})\right)$ of extended degree $\overline\nabla$ for each $e \in \overline E(\mathbb G_\Lambda)$;
\item an integral affine function $\ell(\gamma,\cdot)\:e\to \RR_{\ge 0}$ for each $e \in \overline E(\mathbb G_\Lambda)$ and $\gamma\in E(\GG_e)$;
\item an integral affine function $h(u,\cdot):e\to N_\RR$ for each $e \in \overline E(\mathbb G_\Lambda)$ and $u\in V(\GG_e)$;
\item a parameterized tropical curve $h_w\:\Gamma_w\to N_\RR$ of extended degree $\overline\nabla$ for each $w \in V(\mathbb G_\Lambda)$;
\item a weighted contraction $\varphi_{\vec{e}}\: \mathbb{G}_e \to \mathbb{G}_w$ preserving the order of the legs for each $w\in V(\mathbb G_\Lambda)$ and $\vec{e} \in \Star(w)$.
\end{itemize}
For any $e \in \overline E(\mathbb G_\Lambda)$ and $q\in e^\circ$, set $\Gamma_q:=(\mathbb{G}_e, \ell_q)$, where $\ell_q\:E(\GG_e)\to \RR_{\ge 0}$ is the function defined by $\ell_q(\gamma):=\ell(\gamma,q)$.

\begin{defn}\label{def:familyofparamtrcur}
Let $\Lambda$ be a tropical curve without loops. We say that a datum (\dag) is a {\em family of parameterized tropical curves over $\Lambda$} if the following compatibilities hold for all $w \in V(\mathbb G_\Lambda)$, $\vec{e} \in \Star(w)$, and $q\in e^\circ$:
\begin{enumerate}
\item $(\Gamma_q,h_q)$ is a parameterized tropical curve of combinatorial type $\Theta_e$, where $h_q$ is the unique piecewise affine map for which $h_q(u)=h(u,q)$  for all $u\in V(\GG_e)$, and $\left(\frac{\partial h_q}{\partial \vec l}\right)=\overline\nabla$;
\item  the length of $\varphi_{\vec{e}}(\gamma)$ in $\Gamma_w$ is $\ell(\gamma,w)$ for all $\gamma\in E(\GG_e)$;
\item $h_w(\varphi_{\vec{e}}(u))=h(u,w)$ for all $u\in V(\GG_e)$.
\end{enumerate}
A family of parameterized tropical curves over $\Lambda$ will be denoted by $h\:\Gamma_\Lambda\to N_\RR$.
\end{defn}
	
\subsubsection{The parameter space}
We denote by $M_{g, n, \nabla}^\trop$ the parameter space of isomorphism classes of genus $g$ stable parameterized tropical curves of degree $\nabla$ with exactly $n$ contracted legs. We assume that the contracted legs are $l_1,\dotsc,l_n$. Similarly to the case of the parameter space of abstract tropical curves studied in \cite[Section 2]{ACP}, we consider $M_{g, n, \nabla}^\trop$ as a {\em  generalized polyhedral complex}, i.e. $M_{g, n, \nabla}^\trop$ is glued from ``orbifold'' quotients of polyhedra $\overline M_\Theta$ factored by certain finite groups of automorphisms $\mathrm{Aut}(\Theta)$. In particular, the integral affine structure on the quotient is determined by that of $M_\Theta$, and the ``new'' faces of the geometric quotient $\overline M_\Theta/\mathrm{Aut}(\Theta)$ are not considered to be faces of the orbifold quotient; see {\em loc.cit.} for more details. More explicitly, $M_{g, n, \nabla}^\trop$ is constructed as follows.

A parameterized tropical curve of type $\Theta$ defines naturally a point in $N_\mathbb R^{|V(\mathbb G)|}\times \mathbb R_{>0}^{|E(\mathbb G)|}$, and the set of parameterized
tropical curves of type $\Theta$ gets identified with the interior $M_\Theta$ of a convex polyhedron $\overline M_\Theta\subseteq N_\mathbb R^{|V(\mathbb G)|}\times \mathbb R_{\ge 0}^{|E(\mathbb G)|}$; see, e.g., \cite[Proposition 2.13]{Mik05} or \cite[\S~3]{GM07} for details. The lattice $N^{|V(\mathbb G)|}\times \ZZ^{|E(\mathbb G)|}\subset N_\mathbb R^{|V(\mathbb G)|}\times \mathbb R^{|E(\mathbb G)|}$ defines an integral affine structure on $\overline M_\Theta$.

For each subset $E\subseteq E(\GG)$, the intersection of $\overline M_\Theta$ with the locus
$$
\{(n_u,x_e)\in N_\mathbb R^{|V(\mathbb G)|}\times \mathbb R_{\ge 0}^{|E(\mathbb G)|}\,|\, x_e=0\;\; \text{if and only if}\;\; e\in E \}$$
is either empty or can be identified naturally with $M_{\Theta_E}$, where $\Theta_E$ is the type of degree $\nabla$ and genus $g$, obtained from $\Theta$ by the weighted edge contraction of $E\subset E(\GG)$. Plainly, the corresponding inclusion $\iota_{\Theta,E}\:\overline M_{\Theta_E} \hookrightarrow \overline M_\Theta$ respects the integral affine structures.

Next, notice that an isomorphism $\alpha\:\Theta_1\to\Theta_2$ of combinatorial types induces an isomorphism $N_\mathbb R^{|V(\mathbb G_1)|}\times \mathbb R^{|E(\mathbb G_1)|}\to N_\mathbb R^{|V(\mathbb G_2)|}\times \mathbb R^{|E(\mathbb G_2)|}$ that takes $M_{\Theta_1}$ to $M_{\Theta_2}$ and also respects the integral affine structures. We thus obtain isomorphisms
$\iota_\alpha\:\overline M_{\Theta_1}\to \overline M_{\Theta_2}.$
In particular, the group $\mathrm{Aut}(\Theta)$ acts naturally on $\overline M_\Theta$.

The space $M_{g, n, \nabla}^\trop$ is defined to be the colimit of the diagram, whose entries are $\overline M_\Theta$'s for all combinatorial types $\Theta$ of genus $g$, degree $\nabla$ curves having exactly $n$ contracted legs $l_1,\dotsc,l_n$; and arrows are the inclusions $\iota_{\Theta,E}$'s and the isomorphisms $\iota_\alpha$'s described above. By the construction, for each type $\Theta$ of degree $\nabla$ and genus $g$ with exactly $n$ contracted legs $l_1,\dotsc,l_n$, we have a finite map $M_\Theta \to M_{g, n, \nabla}^\trop$, whose image $M_\Theta/\mathrm{Aut}(\Theta)$ is denoted by $M_{[\Theta]}$.

\begin{defn}
Let $\Lambda$ be a tropical curve, and $\alpha\:\Lambda\to M_{g, n, \nabla}^\trop$ a continuous map. We say that $\alpha$ is {\em piecewise integral affine} if for any $e\in\overline{E}(\Lambda)$ the restriction $\alpha_{|_e}$ lifts to an integral affine map $e \to \overline M_\Theta$ for some combinatorial type $\Theta$.
\end{defn}

\begin{prop}
Let $h\:\Gamma_\Lambda\to N_\RR$ be a family of parameterized tropical curves of degree $\nabla$. Then the induced map $\alpha\:\Lambda\to M_{g, n, \nabla}^\trop$ is piecewise integral affine.
\end{prop}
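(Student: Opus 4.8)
The plan is to verify the defining property of piecewise integral affinity one edge of $\Lambda$ at a time. Fix $e\in\overline E(\GG_\Lambda)$ and let $\Theta_e$ be the combinatorial type attached to $e$ by the datum (\dag). The first step is to write down the obvious candidate lift $\widetilde\alpha_e\:e\to\overline M_{\Theta_e}\subseteq N_\RR^{|V(\GG_e)|}\times\RR_{\ge 0}^{|E(\GG_e)|}$, sending $q$ to the point whose vertex coordinates are $h(u,q)$ for $u\in V(\GG_e)$ and whose edge coordinates are $\ell(\gamma,q)$ for $\gamma\in E(\GG_e)$. I would first check this lands in $\overline M_{\Theta_e}$: over $e^\circ$ it lands in the open stratum $M_{\Theta_e}$ because then all the $\ell(\gamma,q)$ are positive, and closedness of $\overline M_{\Theta_e}$ handles the endpoints. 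Since each $h(u,\cdot)$ and each $\ell(\gamma,\cdot)$ is by definition an integral affine function on $e$, the map $\widetilde\alpha_e$ is integral affine for the integral affine structure on $\overline M_{\Theta_e}$ induced by the lattice $N^{|V(\GG_e)|}\times\ZZ^{|E(\GG_e)|}$.

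Next I would show that post-composing $\widetilde\alpha_e$ with the structure map $\overline M_{\Theta_e}\to M_{g,n,\nabla}^\trop$ of the colimit recovers $\alpha|_e$. Over $e^\circ$ this is immediate from compatibility~(1) in Definition~\ref{def:familyofparamtrcur}: $\widetilde\alpha_e(q)$ is exactly the point of $M_{\Theta_e}$ representing $(\Gamma_q,h_q)$, whose class is $\alpha(q)$. The crux is the behaviour at an endpoint $w$ of $e$, where the lengths $\ell(\gamma,w)$ of the edges in $E_w:=\{\gamma\ |\ \ell(\gamma,w)=0\}$ drop to zero. Using compatibility~(2), one checks that $\varphi_{\vec e}\:\GG_e\to\GG_w$ contracts an edge $\gamma$ if and only if $\ell(\gamma,w)=0$ (if $\varphi_{\vec e}$ did not contract such a $\gamma$, then $\varphi_{\vec e}(\gamma)$ would be an edge of length $0$ in $\Gamma_w$, which is forbidden), so $\varphi_{\vec e}$ induces an isomorphism $(\Theta_e)_{E_w}\xrightarrow{\ \sim\ }\Theta_w$; under this isomorphism and the face inclusion $\iota_{\Theta_e,E_w}\:\overline M_{(\Theta_e)_{E_w}}\hookrightarrow\overline M_{\Theta_e}$, compatibilities~(2) and~(3) identify $\widetilde\alpha_e(w)$ with the point representing $h_w\:\Gamma_w\to N_\RR$, whose class is $\alpha(w)$. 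Hence the composition agrees with $\alpha$ on all of $e$, including the endpoint, so $\alpha|_e$ lifts to the integral affine map $\widetilde\alpha_e$; continuity of $\alpha$ drops out of the same computation, since $\alpha|_e$ is then a composite of continuous maps and the pieces agree on the shared vertices.

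Running this argument over all $e\in\overline E(\GG_\Lambda)$ gives the proposition. I expect the only nontrivial point to be the endpoint bookkeeping in the second step: one has to match the combinatorial data of the family — the contraction $\varphi_{\vec e}$ together with compatibilities (2)--(3) — with the face maps $\iota_{\Theta,E}$ and the identifications $\iota_\alpha$ used to construct the colimit $M_{g,n,\nabla}^\trop$. Everything else is a formal unwinding of definitions; in particular one never needs the automorphism quotients, since the notion of piecewise integral affine only asks for a lift to $\overline M_\Theta$ for \emph{some} type $\Theta$, and that type is supplied here by $\Theta_e$ itself.
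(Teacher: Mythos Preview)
Your proposal is correct and takes essentially the same approach as the paper, which dispatches the proposition with the single line ``Follows immediately from the definitions.'' You have simply spelled out that unwinding in detail: the lift on each $e$ is supplied by the type $\Theta_e$ from the datum~(\dag), its coordinates are the integral affine functions $h(u,\cdot)$ and $\ell(\gamma,\cdot)$, and the compatibilities of Definition~\ref{def:familyofparamtrcur} guarantee that the lift covers $\alpha|_e$ including at the endpoints.
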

\begin{proof}
Follows immediately from the definitions.
\end{proof}

\begin{defn}\label{def:harloccombsurj}
Let $\Lambda$ be a tropical curve, $\alpha\:\Lambda\to M_{g, n, \nabla}^\trop$ a piecewise integral affine map, $w$ a vertex of $\Lambda$, and $\Theta$ a type such that $\alpha(w)\in M_{[\Theta]}$.
\begin{enumerate}
 \item Suppose $\alpha(\Star(w))\subset M_{[\Theta]}$. We say that $\alpha$ is {\em harmonic} at $w$ if $\alpha_{|_{\Star(w)}}$ lifts to a harmonic map $\alpha_w\:\Star(w)\to M_\Theta$, i.e., $\alpha_w$ is piecewise integral affine, and $\sum_{\vec{e}\in \Star(w)}\frac{\partial \alpha_w}{\partial \vec{e}}=0$.
 \item Suppose $\alpha(\Star(w))\nsubseteq  M_{[\Theta]}$. We say that $\alpha$ is \textit{locally combinatorially surjective} at $w$ if for any combinatorial type $\Theta'$ with an inclusion $\iota_{\Theta',E}\: \overline M_\Theta\hookrightarrow \overline M_{\Theta'}$, there exists a germ $\vec e$ of $\Star(w)$ such that $\alpha(\vec e)\cap M_{[\Theta']}\neq \emptyset $.
\end{enumerate}
\end{defn}

\subsubsection{Regularity}\label{sec:regularity}
A parameterized tropical curve $(\Gamma,h)$, its type $\Theta$, and the corresponding stratum $M_{[\Theta]}$ are called {\em regular} if $M_\Theta$ has the expected dimension, 
\begin{equation*} \label{eq:expected_dim}
    \mathrm{expdim}(M_\Theta) \coloneqq |\nabla|+n+(\rank(N)-3)\chi(\GG)-\ov(\GG),
\end{equation*}
 where $\GG$, as usual, denotes the underlying graph, $\chi(\GG)$ its Euler characteristic, and $\ov(\GG)$ the overvalency of $\GG$, i.e., $\ov(\GG):=\sum_{v\in V(\Gamma)}\max\{0,\valence(v)-3\}$. By \cite[Proposition~2.20]{Mik05}, the expected dimension provides a lower bound on the dimension of a stratum $M_\Theta$. Note that in \emph{loc. cit.}, the expected dimension is stated to be $\mathrm{expdim}(M_\Theta) - c$, where $c$ denotes the number of edges contracted by $h$. 
 This discrepancy in the formulae for the expected dimension appears because in \emph{loc. cit.} only deformations of the image $h(\Gamma)$ are taken into account. Remembering also $\Gamma$ gives one additional parameter for each contracted edge, and these parameters are independent. 
 
 The following is Mikhalkin's \cite[Proposition~2.23]{Mik05} stated in terms of the current paper:

\begin{prop}\label{prop:mikhbound}
Assume that $\rank(N)=2$ and $h$ is an immersion away from the contracted legs. If $\GG$ is weightless and $3$-valent, then $(\Gamma,h)$ is regular, and $\dim(M_\Theta)=|\nabla|+n+g-1$. On the other hand, if $\ov(\GG)>0$, then $\dim(M_\Theta)\le |\nabla|+n-\chi(\GG)-1\le |\nabla|+n+g-2$.
\end{prop}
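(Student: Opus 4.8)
The plan is to reduce the statement to a pair of dimension counts for the polyhedron $\overline M_\Theta$ and then invoke the known lower bound on $\dim(M_\Theta)$ recalled above. Recall that the expected dimension is $|\nabla|+n+(\rank(N)-3)\chi(\GG)-\ov(\GG)$, which with $\rank(N)=2$ becomes $|\nabla|+n-\chi(\GG)-\ov(\GG)$. When $\GG$ is weightless we have $\chi(\GG)=1-b_1(\GG)=1-g$, so the expected dimension equals $|\nabla|+n+g-1-\ov(\GG)$. Thus the first claim — that a weightless $3$-valent $\GG$ gives a regular type with $\dim(M_\Theta)=|\nabla|+n+g-1$ — amounts to showing that, under these hypotheses, $M_\Theta$ actually attains the expected dimension (here $\ov(\GG)=0$). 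The second claim is the inequality $\dim(M_\Theta)\le|\nabla|+n-\chi(\GG)-1$ when $\ov(\GG)>0$, together with the elementary observation that $-\chi(\GG)-1=g-2-(b_1(\GG^{\mathrm{weightless}}) \text{ correction})\le|\nabla|+n+g-2$ once one accounts for weights; since $h$ is an immersion away from contracted legs and the curve is parametrized, $\chi(\GG)\ge 1-g$, giving the stated chain of inequalities.

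First I would recall Mikhalkin's deformation-theoretic description of $M_\Theta$: fixing the combinatorial type $\Theta$, a point of $M_\Theta$ is determined by the positions $h(v)\in N_\RR$ of the vertices together with the edge lengths $\ell(\gamma)>0$, subject to the constraint that for each bounded edge $\gamma=\vec{uv}$ the vector $h(v)-h(u)$ is a nonnegative multiple of the prescribed primitive slope. This realizes $\overline M_\Theta$ as a polyhedral cone cut out inside $N_\RR^{|V|}\times\RR_{\ge0}^{|E|}$ by linear equations, one for each bounded edge expressing that $h(v)-h(u)$ is parallel to the fixed direction (i.e.\ $\rank(N)-1=1$ linear condition per bounded edge in the $\rank(N)=2$ case), and I would compute the dimension by the rank–nullity count: $2|V|+|E|-|E|\cdot 1 = 2|V|$ minus the overall translation ambiguity, but more efficiently one parametrizes by a spanning tree. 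Following \cite[Proposition~2.23]{Mik05} (or \cite[\S 3]{GM07}), the clean way is: choose a spanning tree $T\subseteq\GG$; the lengths of edges in $T$ together with the position of one root vertex freely determine $h$ on all of $\GG$ via the balancing-free tree propagation, contributing $\rank(N)+|E(T)|$ parameters; the $b_1(\GG)$ edges outside $T$ each impose that a certain loop sum closes up, which is $\rank(N)$ scalar conditions but of which one combination is automatically satisfied by balancing at the newly-involved vertices — carefully, for $3$-valent weightless $\GG$ the count works out to exactly $|\nabla|+n+g-1$, matching the expected dimension, hence regularity. For the overvalent case $\ov(\GG)>0$, the same tree argument shows each vertex of valency $\val(v)>3$ forces its outgoing slopes to lie in a lower-dimensional configuration, cutting the dimension by at least $\val(v)-3$ at that vertex; summing over vertices and comparing with $\chi(\GG)$ yields $\dim(M_\Theta)\le|\nabla|+n-\chi(\GG)-1$, and since $h$ is an immersion away from contracted legs the underlying graph has $b_1(\GG)\le g$, i.e.\ $-\chi(\GG)\le g-1$, giving $\dim(M_\Theta)\le|\nabla|+n+g-2$.

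The only real subtlety — and the step I expect to be the main obstacle in a self-contained write-up — is bookkeeping the interaction between the loop-closing conditions and the balancing condition: naively each of the $b_1(\GG)$ extra edges imposes $\rank(N)=2$ conditions, which would overshoot, so one must argue that exactly $b_1(\GG)$ of these $2\,b_1(\GG)$ conditions are redundant, consumed by balancing. This is precisely where Mikhalkin's argument is used, and for the paper's purposes it is cleanest simply to cite \cite[Proposition~2.23]{Mik05}, transcribing its conclusion into the notational conventions fixed in \S\ref{sec:regularity} (in particular accounting for the discrepancy, noted there, that our $\Gamma$ carries contracted edges which Mikhalkin's topological model does not — this shifts every dimension formula by the number of contracted edges in a way that is already absorbed into the statement as written). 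Thus the proof is: (i) translate the hypotheses into Mikhalkin's setup; (ii) quote \cite[Proposition~2.20 and~2.23]{Mik05} for the lower bound and the two upper/equality estimates respectively; (iii) rewrite $\chi(\GG)=1-g+\sum_v g(v)$ and use weightlessness to get $\chi(\GG)=1-g$ in the first case and $\chi(\GG)\ge 1-g$ (with $h$ an immersion off the contracted legs ruling out pathologies) in the second; (iv) conclude $\dim(M_\Theta)=|\nabla|+n+g-1$ in the regular case and $\dim(M_\Theta)\le|\nabla|+n+g-2$ in the overvalent case, which is the assertion.
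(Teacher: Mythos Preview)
Your approach matches the paper's: the proposition is stated there as a restatement of \cite[Proposition~2.23]{Mik05} with no proof given, and your plan is precisely to translate the hypotheses and cite that result together with the lower bound \cite[Proposition~2.20]{Mik05}. One small correction: the inequality $-\chi(\GG)\le g-1$ follows directly from the definition $g=1-\chi(\GG)+\sum_v g(v)$ with $g(v)\ge 0$, not from the immersion hypothesis; the role of ``$h$ is an immersion away from the contracted legs'' is rather to ensure there are no contracted bounded edges, which is what makes Mikhalkin's dimension count (formulated for topological tropical curves without internal contracted edges) applicable after the discrepancy noted in \S\ref{sec:regularity}.
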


\subsubsection{The evaluation map}\label{sec:evaluation}
Consider the natural evaluation map $\ev\: M_{g, n, \nabla}^\trop\to N_{\RR}^{n}$, defined by $\ev(\Gamma,h):=\left(h(l_1),\dotsc, h(l_n)\right)$. Let $\Theta$ be a combinatorial type, $(q_1,\dotsc,q_n)\in N_\RR^n$ any tuple, and $\ev_\Theta$ the composition $\overline{M}_\Theta\to M_{g, n, \nabla}^\trop\to N_{\RR}^{n}$. Notice that $\ev_\Theta$ is the restriction of a linear projection $N_\RR^{|V(\GG)|} \times \RR^{|E(\GG)|} \to N_\RR^n$. Thus, $\ev_\Theta^{-1}(q_1,\dotsc,q_n)$ is a polyhedron cut out in $\overline{M}_\Theta$ by an affine subspace, and hence its boundary is disjoint from $M_\Theta$ unless $\ev_\Theta^{-1}(q_1,\dotsc,q_n)$ is a point.

\begin{defn}
We say that a tuple $(q_1,\dotsc,q_n)\in N_\RR^n$ is {\em general with respect to} $\nabla$ and $g$ if for any combinatorial type $\Theta$ of degree $\nabla$ and genus $g$ with exactly $n$ contracted legs, either the codimension of $\ev_\Theta^{-1}(q_1,\dotsc, q_n)$ in $\overline{M}_\Theta$ is $n\cdot\rank(N)$, as expected, or $\ev_\Theta^{-1}(q_1,\dotsc, q_n)=\emptyset$.
\end{defn}

In what follows, tuples of points will be called general, without mentioning $\nabla$ and $g$, which we tacitly assume to be fixed. Since there are only finitely many isomorphism classes of types $\Theta$ of fixed degree and genus, the set of non-general tuples is a nowhere dense closed subset in $N_\RR^n$.
	
\begin{rem} \label{rem:mik_genpos} 
There are other ways to define tropical general position in the literature. In particular, we will use Mikhalkin's definition \cite[Definition 4.7]{Mik05} in the proof of the next proposition. For $\rank(N) = 2$ a tuple of points $(q_1, \dotsc, q_n) \in N_\RR^n$ is in general position in the sense of Mikhalkin if for any parameterized tropical curve $(\Gamma, h)$ of genus $g$ and with $n$ contracted legs $l_1, \dotsc, l_n$, such that $h(l_i) = q_i$ and with $n \geq |\nabla| + g - 1$ the following holds: 
\begin{enumerate}
    \item The underlying graph of $\Gamma$ is weightless and $3$-valent, all slopes are non-zero, except for the slopes of the contracted legs, and if $h(q) = h(q')$ for two points $q, q' \in \Gamma$, then neither $q$ nor $q'$ is a vertex and there is no third point of $\Gamma$ mapped to $h(q)$,
    \item the only vertex contained in $h^{-1}(q_i)$ is the one adjacent to $l_i$, and 
    \item $n = |\nabla| + g - 1$.
\end{enumerate}
\end{rem}
	
\begin{prop}\label{prop:genmikgen}
Assume that $\rank(N) = 2$, $n=|\nabla|+g-1$, and $(q_1,\dotsc,q_n)\in N_\RR^n$ is a general tuple. Let $\Theta$ be a combinatorial type of degree $\nabla$ and genus $g$ with $n$ contracted legs, and let $\GG$ be its underlying weighted graph. If $\ev_\Theta^{-1}(q_1,\dotsc, q_n)\ne\emptyset$, then $\GG$ is $3$-valent and weightless, and all slopes are non-zero, except for the slopes of the contracted legs.
\end{prop}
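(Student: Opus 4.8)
The plan is to bound $\dim\overline M_\Theta$ from below using the generality of $(q_1,\dots,q_n)$, and then to contradict that bound, whenever $\Theta$ is not as claimed, by invoking Mikhalkin's dimension estimates (Proposition~\ref{prop:mikhbound}) after reducing to the situation those estimates cover. Set $D:=|\nabla|+n+g-1$, and note $D=2n=n\cdot\rank(N)$ by the hypothesis $n=|\nabla|+g-1$. Since $(q_1,\dots,q_n)$ is general and $\ev_\Theta^{-1}(q_1,\dots,q_n)$ is nonempty, the definition of a general tuple says this fiber has codimension exactly $n\cdot\rank(N)$ in $\overline M_\Theta$; as the fiber is a subpolyhedron, its codimension is $\dim\overline M_\Theta-\dim\ev_\Theta^{-1}(q_1,\dots,q_n)$, so
\[
\dim\overline M_\Theta=\dim\ev_\Theta^{-1}(q_1,\dots,q_n)+n\cdot\rank(N)\ \geq\ D .
\]
It therefore suffices to show that if $\GG$ carries a positive weight, or a bounded edge of zero slope, or a vertex of valency at least four, then $\dim\overline M_\Theta\geq D$ leads to a contradiction.

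Consider first the case in which $\GG$ is weightless, has no contracted bounded edge, and $h$ is an immersion away from the contracted legs — so Proposition~\ref{prop:mikhbound} applies verbatim. If $\ov(\GG)>0$ it gives $\dim\overline M_\Theta\leq|\nabla|+n+g-2=D-1$, contradicting the lower bound; hence $\ov(\GG)=0$, and since $\GG$ is stable (weight-zero vertices have valency at least three) this forces $\GG$ to be $3$-valent. With no contracted bounded edge, all bounded edges carry nonzero slope, and the non-contracted legs carry nonzero slope by definition; so $\Theta$ is as asserted.

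To reduce the general case to this one, I would resolve the three possible obstructions — positive-weight vertices, contracted bounded edges, vertices of valency $\geq4$ — by local surgeries, each producing a new combinatorial type $\Theta'$ of the same genus and degree with the same $n$ contracted legs, arranged so that: (i) the surgery does not move any contracted leg, so the curve realizing a point of $\ev_\Theta^{-1}(q_1,\dots,q_n)\cap M_\Theta$ deforms, or degenerates, to one of type $\Theta'$ still passing through $(q_1,\dots,q_n)$, whence $\ev_{\Theta'}^{-1}(q_1,\dots,q_n)\neq\emptyset$ and the lower bound $\dim\overline M_{\Theta'}\geq D$ holds for $\Theta'$ too; and (ii) $\Theta'$ has fewer obstructions, with the new slopes chosen generically so that $h'$ is an immersion on the modified part. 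Concretely: contract each contracted bounded edge (this strictly raises $\ov$, a contracted loop so produced being absorbed into the incident vertex's weight); open each positive-weight vertex of valency $d$ into a weightless trivalent graph of the same genus with $d$ legs and generic internal slopes — solvable because the $d$ outer slopes sum to zero, and here $M_\Theta$ becomes a proper face of $\overline M_{\Theta'}$, so that if $\Theta'$ is already nice and immersed then $\dim\overline M_\Theta<\dim\overline M_{\Theta'}=D$, a contradiction; and split each vertex of valency $\geq4$ into trivalent ones joined by new edges of generic slope, routing germs of a common direction to distinct vertices. Iterating, one either reaches a contradiction along the way or arrives at a type $\widehat\Theta$ covered by the previous paragraph with $\dim\overline M_{\widehat\Theta}\geq D$ and $\ov(\widehat\GG)>0$, which contradicts Proposition~\ref{prop:mikhbound}. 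Hence $\Theta$ is weightless and $3$-valent with all slopes nonzero except those of the contracted legs.

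The main obstacle, I expect, is the bookkeeping in this last step: one must check that the surgeries can be carried out compatibly with the balancing condition while keeping the relevant stratum nonempty and of dimension at least $\dim\overline M_\Theta$, that ``folding'' at trivalent vertices — which can be genuinely unavoidable, e.g.\ when all germs at a vertex are parallel — does not disturb the dimension comparison, and that the total number of obstructions really decreases without creating new degenerate cycles. A cleaner route I would keep in reserve is to bypass the immersion hypothesis altogether by proving directly that for a non-nice type $\Theta$ the evaluation map $\ev_\Theta\colon\overline M_\Theta\to N_\RR^{n}$ is not dominant — so a general $(q_1,\dots,q_n)$ lies outside its image and $\ev_\Theta^{-1}(q_1,\dots,q_n)=\emptyset$ — via a spanning-tree analysis of $\overline M_\Theta\subseteq N_\RR^{|V(\GG)|}\times\RR_{\geq0}^{|E(\GG)|}$: a zero-slope bounded edge makes $\ev_\Theta$ factor through contraction of that edge, while excess valency forces the images of the contracted legs to sweep out a subspace of dimension strictly less than $2n$; this is the step I would expect to require the most care.
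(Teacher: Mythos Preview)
Your approach is genuinely different from the paper's, and the surgery reduction you outline is a real gap rather than routine bookkeeping.

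The paper's proof is a three-line perturbation argument that never touches Proposition~\ref{prop:mikhbound}. It observes that if $\xi\in\ev_\Theta^{-1}(q_1,\dots,q_n)$, one may assume $\xi\in M_\Theta$ (since a weightless $3$-valent stable graph is not a nontrivial contraction of any stable graph, the boundary case reduces to the interior case). Generality of $(q_1,\dots,q_n)$ then means the fiber has the expected codimension, so $\ev_\Theta$ is a submersion at $\xi$; in particular its image contains an open neighborhood of $(q_1,\dots,q_n)$. By \cite[Corollary~4.12]{Mik05} the tuples in Mikhalkin's \emph{tropically general position} are dense, so some such tuple $(q_1',\dots,q_n')$ lies in the image, witnessed by a nearby $\xi'\in M_\Theta$. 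Mikhalkin's \cite[Definition~4.7]{Mik05} then gives the conclusion directly: any curve of type $\Theta$ through a tropically general configuration is weightless, $3$-valent, with no contracted bounded edges. No dimension estimate for non-nice strata is ever invoked.

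Your route via the lower bound $\dim\overline M_\Theta\ge 2n$ and Proposition~\ref{prop:mikhbound} runs into exactly the obstacle you flag: that proposition assumes $h$ is an immersion away from the contracted legs, and the surgeries you propose to enforce this are not innocuous. Opening a weighted vertex into a trivalent subgraph with ``generic internal slopes'' may be impossible while respecting balancing and keeping all edge lengths positive (think of a vertex all of whose incident directions are parallel); splitting a $4$-valent vertex can unavoidably create a new folded $3$-valent vertex; and contracting zero-slope edges while simultaneously resolving the resulting overvalence can cycle. More seriously, you need the modified type $\Theta'$ to still have $\ev_{\Theta'}^{-1}(q_1,\dots,q_n)\neq\emptyset$ for the \emph{same} general tuple, and for the vertex-splitting surgeries it is not clear that $M_\Theta$ sits in $\overline M_{\Theta'}$ in a way that guarantees this. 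Your ``cleaner route'' at the end---showing directly that $\ev_\Theta$ is not dominant for non-nice $\Theta$---is essentially what the paper does in reverse (submersion $\Rightarrow$ dominant $\Rightarrow$ hits a Mikhalkin-general tuple), but you would still need Mikhalkin's characterization to close the argument.
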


\begin{proof}
Let $\xi\in\ev_\Theta^{-1}(q_1,\dotsc, q_n)$ be any point. Since weightless $3$-valent graphs are not contractions of other stable weighted graphs, we may assume that $\xi\in M_\Theta$. The map $\ev_\Theta$ is the restriction of a linear projection $N_\RR^{|V(\GG)|} \times \RR^{|E(\GG)|} \to N_\RR^n$ to the polyhedral cone $\overline M_\Theta\subset N_\RR^{|V(\GG)|} \times \RR^{|E(\GG)|}$. By definition of points in general position, $\ev_\Theta^{-1}(q_1,\dotsc, q_n)$ has codimension $2n$ in $\overline M_\Theta$. Since $N_\RR^n$ has dimension $2n$, it follows that $\ev_\Theta$ is a submersion at any point in the open cone $M_\Theta$, in particular at the point $\xi$. By \cite[Corollary~4.12]{Mik05}, the tuples of points in tropically general position in the sense of Mikhalkin are dense, and hence there exists a small perturbation $\xi'\in \ev_\Theta^{-1}(q'_1,\dotsc, q'_n)$ of $\xi$, where $(q'_1,\dotsc, q'_n)$ is a tuple in tropically general position in the sense of Mikhalkin. Thus the assertion holds by definition (see Remark~\ref{rem:mik_genpos}).
\end{proof}

\begin{defn}\label{defn:simple wall}
A stratum $M_{[\Theta]}\subset M^\trop_{g,n,\nabla}$ is called {\em nice} if it is regular, and the underlying graph $\GG$ is weightless and $3$-valent. A stratum $M_{[\Theta]}$ is called {\em a simple wall} if it is regular, and $\GG$ is weightless and $3$-valent except for a unique $4$-valent vertex.
\end{defn}
	
\begin{lem}\label{lem:dimbound}
Assume that $(q_1, \dotsc, q_n) \in N_\RR^n$ is general, $\ev_\Theta^{-1}(q_1,\dotsc, q_n)\ne\emptyset$, and $\rank(N)=2$. Then,
\begin{enumerate}		
\item If $n=|\nabla|+g-1$, then $M_{[\Theta]}$ is nice;
		
\item If $n=|\nabla|+g-2$ and $M_{[\Theta]}$  is a simple wall, then $\ev_\Theta^{-1}(q_1,\dotsc, q_n)$ is a point in $M_\Theta$;
		
\item If $n=|\nabla|+g-2$ and $M_{[\Theta]}$ is nice, then $\ev_\Theta^{-1}(q_1,\dotsc, q_n)\subset\overline M_\Theta$ is an interval, whose boundary is disjoint from $M_\Theta$.
\end{enumerate}
\end{lem}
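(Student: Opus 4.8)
The plan is to derive all three parts from the expected‑dimension formula of Section~\ref{sec:regularity} combined with the generality of the tuple $(q_1,\dotsc,q_n)$. Since in each part the underlying graph $\GG$ will turn out to be weightless, I first record that then $\chi(\GG)=1-g$, so that whenever $M_{[\Theta]}$ is regular and $\rank(N)=2$ one has $\dim\overline M_\Theta=\dim M_\Theta=|\nabla|+n+g-1-\ov(\GG)$. I also use repeatedly the two facts recalled in Section~\ref{sec:evaluation}: that $\ev_\Theta^{-1}(q_1,\dotsc,q_n)$ is a polyhedron cut out in $\overline M_\Theta$ by an affine subspace, whose boundary is therefore disjoint from $M_\Theta$ unless the fiber is a single point; and that generality forces $\ev_\Theta^{-1}(q_1,\dotsc,q_n)$ to be empty or of codimension exactly $n\cdot\rank(N)=2n$ in $\overline M_\Theta$.

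For part (1) we have $n=|\nabla|+g-1$, so Proposition~\ref{prop:genmikgen} applies and already yields that $\GG$ is weightless and $3$-valent, with all slopes nonzero except those of the contracted legs. To conclude that $M_{[\Theta]}$ is nice it only remains to verify regularity, which I would derive from Proposition~\ref{prop:mikhbound}: its hypothesis that $h$ be an immersion away from the contracted legs is supplied by the proof of Proposition~\ref{prop:genmikgen}, which realizes $\Theta$ by a parametrized curve in tropically general position in Mikhalkin's sense, hence by an immersion away from the contracted legs.

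For parts (2) and (3), regularity of $M_{[\Theta]}$ is assumed and $n=|\nabla|+g-2$, so the formula above gives $\dim\overline M_\Theta=2n$ when $M_{[\Theta]}$ is a simple wall (so $\ov(\GG)=1$) and $\dim\overline M_\Theta=2n+1$ when $M_{[\Theta]}$ is nice (so $\ov(\GG)=0$). Since $\ev_\Theta^{-1}(q_1,\dotsc,q_n)$ is nonempty, generality makes it of codimension $2n$, hence of dimension $0$ in the simple-wall case and of dimension $1$ in the nice case. A $0$-dimensional polyhedron is a single point, which is part (2) modulo locating it; a $1$-dimensional one is an interval (a segment, a ray, or a line), and, not being a point, its boundary is disjoint from $M_\Theta$ by the first fact above, which is part (3). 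It remains to show, in part (2), that the point $\xi=\ev_\Theta^{-1}(q_1,\dotsc,q_n)$ lies in $M_\Theta$. If not, $\xi$ lies in the relative interior of a proper face of $\overline M_\Theta$, which via $\iota_{\Theta,E}$ is identified with $\overline M_{\Theta_E}$ for some nonempty $E\subseteq E(\GG)$, and $\Theta_E$ again has degree $\nabla$, genus $g$, and $n$ contracted legs. Then $\xi\in\ev_{\Theta_E}^{-1}(q_1,\dotsc,q_n)\ne\emptyset$, so generality applied to $\Theta_E$ gives $\dim\overline M_{\Theta_E}\ge 2n$; but a proper face of $\overline M_\Theta$ has dimension at most $\dim\overline M_\Theta-1=2n-1$, a contradiction.

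The one step that calls for genuine care is the regularity claim in part (1): the dimension bounds of Proposition~\ref{prop:mikhbound} are available only once one knows that $\Theta$ admits an immersed parametrization, and establishing this is exactly where the earlier tropical‑general‑position input enters. Everything else is bookkeeping of dimensions against the generality hypothesis, together with the elementary geometry of a polyhedron intersected with an affine subspace.
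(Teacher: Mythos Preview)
Your proposal is correct and follows essentially the same approach as the paper's proof: part (1) via Proposition~\ref{prop:genmikgen}, and parts (2)--(3) by the straightforward dimension count using regularity and the codimension-$2n$ consequence of generality. You supply additional detail in two places where the paper is terse---deriving regularity in (1) from Proposition~\ref{prop:mikhbound}, and in (2) arguing via face dimensions that the point actually lies in $M_\Theta$---but the underlying argument is the same.
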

	
\begin{proof}
Assertion (1) follows from Proposition~\ref{prop:genmikgen}. Thus, we assume that $n=|\nabla|+g-2$. If $M_{[\Theta]}$ is a simple wall, then it is regular, and the graph $\GG$ is weightless and has overvalency one. Therefore, $$\dim\left(\ev_\Theta^{-1}(q_1,\dotsc, q_n)\right)=|\nabla|+n+g-2-2n=0,$$ and hence $\ev_\Theta^{-1}(q_1,\dotsc, q_n)$ is a point in $M_\Theta$, as asserted in (2). Similarly, if $M_{[\Theta]}$ is nice, then the dimension count shows that $\ev_\Theta^{-1}(q_1,\dotsc, q_n)$ is an interval, which implies assertion (3).
\end{proof}

\subsection{Floor decomposed curves}\label{subsec:floordecomp}
Next we recall the notion of a floor decomposed curve as introduced by Brugall\'e and Mikhalkin \cite{BM08}. We will follow the presentation in \cite[\S 4.4]{BIMS}, see also \cite[\S 2.5]{rau17}. For the rest of this section, we assume $M=N=\mathbb Z^2$ and $\Delta\subset M_\mathbb R=\mathbb R^2$ is an $h$-transverse polygon (cf. \cite[\S 2]{BM08}), e.g., the triangle $\Delta_d$ with vertices $(0,0)$, $(0,d)$ and $(d,0)$. We associate to $\Delta$ the reduced degree $\nabla$ of tropical curves dual to $\Delta$, i.e., $\nabla$ consists of primitive outer normals to the sides of $\Delta$, and the number of slopes outer normal to a given side is equal to its integral length.

\tikzset{every picture/.style={line width=0.75pt}} %set default line width to 0.75pt
\begin{figure}[ht]
\begin{tikzpicture}[x=0.5pt,y=0.5pt,yscale=-0.7,xscale=0.7]
\import{./}{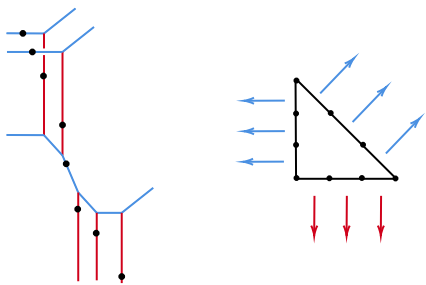}
\end{tikzpicture}
\caption{A collection of vertically stretched points, and a floor decomposed cubic tropical curve passing through them and having the reduced tropical degree associated to $\Delta_3$.}
\label{fig:floor}
\end{figure}

\begin{defn}\label{defn:vertical stretched point}
	Let $\lambda\in\RR$ be a positive real number. A point configuration $q_1,\dotsc,q_n$ in $\mathbb R^2$ is called \textit{vertically $\lambda$-stretched} if for any pair of distinct points $q_i=(x,y)$ and $q_j=(x',y')$ in the configuration the following holds: $|y-y'|>\lambda \cdot |x-x'|$.
\end{defn}

\begin{defn}\label{defn:floor decomposed curve}
	A parameterized tropical curve $(\Gamma,h)$ is called \textit{floor decomposed} if for any edge $e$ of $\Gamma$, the slope $\frac{\partial h}{\partial \vec{e}}$ is either $(\pm 1,*)$ or $(0,*)$.
\end{defn}

Let $(\Gamma,h)$ be a floor decomposed parameterized tropical curve. If the image of a non-contracted edge (resp. leg) is vertical, then the edge (resp. leg) is called an \textit{elevator}. After removing the interiors of all elevators in $\Gamma$, we are left with a disconnected graph. The non-contracted connected components of this graph are called the \textit{floors} of $\Gamma$. In Figure~\ref{fig:floor}, the elevators are the red edges, and the blue components are the floors.

\begin{prop}\label{prop:floor decomposed curves}\cite[\S 5.1]{BM08}
	Let $g$ be an integer, and set $n := |\nabla|+g-1$. Then there exists $\lambda$ such that for any generic configuration $q_1,\dotsc, q_n\subset \mathbb R^2$ of vertically $\lambda$-stretched points, every parameterized tropical curve $(\Gamma, h)$ in $\ev^{-1}(q_1,\dotsc, q_n)\subset M^\trop_{g,n,\nabla}$ is floor decomposed. Furthermore, each floor and each elevator of $\Gamma$ contains exactly one of the $q_i$'s in its image.
\end{prop}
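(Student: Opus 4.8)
The plan is to follow the approach of Brugall\'e and Mikhalkin \cite[\S 5.1]{BM08}, using the genericity results established above to cut the problem down to a finite combinatorial check. Set $n:=|\nabla|+g-1$ and take the configuration general in the sense of \S\ref{sec:evaluation}. By Lemma~\ref{lem:dimbound}(1) — which in turn rests on Proposition~\ref{prop:genmikgen} — every $(\Gamma,h)\in\ev^{-1}(q_1,\dotsc,q_n)$ then lies in a nice stratum $M_{[\Theta]}$: the underlying graph $\GG$ is weightless and $3$-valent, all its slopes are nonzero except those of the contracted legs, and $\ev_\Theta^{-1}(q_1,\dotsc,q_n)$ is a single point. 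Since there are only finitely many isomorphism classes of combinatorial types of degree $\nabla$ and genus $g$ with $n$ contracted legs, it will suffice to produce, for each such type $\Theta$, a bound $\lambda_\Theta$ beyond which $\Theta$ is incompatible with vertical $\lambda$-stretching unless $\Theta$ is already floor decomposed; one then takes $\lambda$ to be the maximum of the $\lambda_\Theta$.

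Next I would observe that the legs can never be the obstruction. Because $\Delta$ is $h$-transverse, every side of $\Delta$ is horizontal or has primitive direction with vertical coordinate $\pm 1$, so every slope occurring in $\nabla$ is of the form $(0,\pm1)$ or $(\pm1,\ast)$, while the contracted legs have slope $0$; hence every leg of $(\Gamma,h)$ already has $x$-slope in $\{-1,0,1\}$. Thus $(\Gamma,h)$ can fail to be floor decomposed only if some \emph{bounded} edge $e_0$ has a slope $\frac{\partial h}{\partial\vec e_0}$ whose horizontal coordinate is at least $2$ in absolute value, and one is reduced to ruling out the (finitely many) nice types carrying such a ``steep'' edge.

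The heart of the argument — and the step I expect to be the main obstacle — is showing that a steep type is incompatible with strong vertical stretching. The inputs are the tropical conservation law (for a generic vertical line $\ell$, the horizontal coordinates of the slopes of the edges of $h(\Gamma)$ meeting $\ell$ add up in absolute value to the horizontal width of $\Delta$, a fixed number, and likewise for horizontal lines) together with the balancing condition, which makes the two non-contracted edges at each marked vertex anti-parallel. When the configuration is vertically $\lambda$-stretched, all marked points have $x$-coordinates in an interval of length at most $\lambda^{-1}(y_{\max}-y_{\min})$, so the configuration is squeezed into an arbitrarily thin vertical window relative to its height. I would then trace the curve downward from its highest marked point, use the conservation law and the thinness of the window to force the portion of the curve near each marked point to be a single floor or a single elevator, and show that a steep edge cannot be fitted into the window while spanning the height it is obliged to span, once $\lambda$ exceeds a bound that depends only on the combinatorics of $\Theta$. (An alternative is to run this as an induction, peeling off the topmost floor or elevator and reducing to a configuration realizing a curve of strictly smaller degree.) Turning this intuition into a clean estimate uniform over the finitely many steep types is exactly the content of \cite[\S 5.1]{BM08}.

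Finally, for the count, I would contract all elevators of a floor-decomposed $(\Gamma,h)$, realizing each floor as a parametrized tropical curve dual to a horizontal slice of $\Delta$, with non-contracted legs again of $x$-slope in $\{-1,0,1\}$, so that the bounded elevators and the graph recording how the floors are glued, together with the internal first Betti numbers of the floors, account for the genus $g$. Applying the dimension estimate of Proposition~\ref{prop:mikhbound} inside each floor shows that a floor of internal first Betti number $b$ carrying $m$ marked points is unpinned unless $m\ge b+1$, and each bounded elevator carrying a marked point is pinned; summing over all floors and elevators yields $n\le N\le n$, where $N$ is the number of floors and elevators meeting the configuration, forcing equalities throughout and hence exactly one $q_i$ on each floor and each elevator. (See \cite[\S 5.1]{BM08} for the details.)
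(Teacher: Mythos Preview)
The paper does not supply its own proof of this proposition: it is quoted verbatim from \cite[\S 5.1]{BM08} and used as a black box. Your sketch is a reasonable outline of the Brugall\'e--Mikhalkin argument and correctly isolates the main ingredients: the reduction to finitely many nice combinatorial types via Proposition~\ref{prop:genmikgen} and Lemma~\ref{lem:dimbound}(1), the observation that $h$-transversality forces all leg slopes to have $x$-coordinate in $\{-1,0,1\}$, the incompatibility of a ``steep'' bounded edge with a sufficiently thin vertical window, and the final dimension count forcing exactly one marked point per floor and per elevator. Since you yourself defer to \cite[\S 5.1]{BM08} for the core estimate, your treatment is entirely consistent with the paper's, only more explicit.
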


By abuse of language, in what follows we will call a collection of points as in Proposition~\ref{prop:floor decomposed curves} just {\em vertically stretched}.

\begin{rem}\label{rem:multiplicity of elevator}
If $\Delta=\Delta_d$, then there is a unique elevator adjacent to the top floor, and, by the balancing condition, for every floor there is a downward elevator adjacent to it. The elevator adjacent to the top floor and the downward elevators adjacent to the bottom floor -- all have multiplicity one. If in addition the points $q_1,\dotsc,q_n$ belong to the line given by $y=-\mu x$ for $\mu\gg 1$, then the $x$-coordinate of the marked point belonging to the image of the top floor is smaller than the $x$-coordinate of any other $q_i$.
\end{rem}

\begin{rem}\label{rem:elevators}
	In the original definition of \cite{BM08}, an elevator either connects two floors or is an infinite ray attached to a floor. In our convention, an elevator of \cite{BM08} may split into a couple of elevators separated by a vertex mapped to one of the $q_i$'s.
\end{rem}

\section{Tropicalization}\label{sec:tropicalizatoin of families of curves}

The goal of this section is to construct the tropicalization of certain one-parameter families of algebraic curves with a map to a toric variety (Theorem~\ref{thm:paramtrop}). As a result we obtain a tropical tool (Corollary~\ref{cor:tropicalization_inf}) for studying algebraic degenerations that plays a central role in the proof of the Main Theorem.

\subsection{Notation and terminology}\label{subsec:notation in tropicalization}
Throughout the section we assume that $K$ is the algebraic closure of a complete discretely valued field $F$. We denote the valuation by $\val\:K\to \RR\cup\{\infty\}$, the ring of integers by $K^0$, and its maximal ideal by $K^{00}$. We fix a pair of dual lattices $M$ and $N$, and a toric variety $S$ with lattice of monomials $M$. In this paper we will mostly be interested in the case $S=\PP^2$, but the tropicalization we are going to describe works in general.

By a {\em family of curves} we mean a {\em flat, projective} morphism of finite presentation and relative dimension one. By a collection of {\em marked points} on a family of curves we mean a collection of disjoint sections contained in the smooth locus of the family. A family of curves with marked points is {\em prestable} if its fibers have at-worst-nodal singularities; cf. \cite[\href{https://stacks.math.columbia.edu/tag/0E6T}{Tag~0E6T}]{stacks-project}. It is called {\em (semi-)stable} if so are its geometric fibers. A prestable curve with marked points over a field is called {\em split} if the irreducible components of its normalization are geometrically irreducible and smooth, and the preimages of the nodes in the normalization are defined over the ground field. A family of prestable curves with marked points is called {\em split} if all of its fibers are so; cf. \cite[\S~2.22]{dJ96}. If $U\subset Z$ is open, and $(Y,\sigma_\bullet)$ is a family of curves with marked points over $U$, then by a {\em model} of $(Y,\sigma_\bullet)$ over $Z$ we mean a family of curves with marked points over $Z$, whose restriction to $U$ is $(Y,\sigma_\bullet)$.

\subsection{Tropicalization for curves}\label{subsec:tropicalization for curves}

We first recall the canonical tropicalization construction for a fixed (parameterized) curve over $K$. This is well established, and we refer to \cite{Tyo12} and \cite{BPR} for details. The normalization of signs is chosen such that the algebraic definition is compatible with the standard tropical pictures. A \emph{parameterized curve} in $S$ is a smooth projective curve with marked points $(X,\sigma_\bullet)$ and a map $f\: X \to S$ such that $f(X)$ does not intersect orbits of codimension greater than one, and the image of $X \setminus \left( \bigcup_i \sigma_i \right)$ under $f$ is contained in the dense torus $T \subset  S$.

Let $f\: X \to S$ be a parameterized curve, and $X^0 \to \Spec(K^0)$ a prestable model. Denote by $\widetilde{X}$ the fiber of $X^0$ over the closed point of $\Spec(K^0)$. As usual, a point $s \in \widetilde X$ is called \textit{special} if it is either a node or a marked point of $\widetilde X$. Let $D = \Sigma_j D_j$ be the boundary divisor of $S$. Set $f^*(D_j):= \Sigma_r d_{j, r} p_{j, r}$ with $d_{j, r} \in \NN$ and $p_{j, r} \in X$. Plainly, $\{p_{j,r}\}\subseteq \{\sigma_i\}$ by definition. The collection of multisets $\{d_{j,r}\}_r$ for each $j$ is called the \emph{tangency profile} of $f\: X \to S$. We say that the tangency profile is trivial if $d_{j,r} = 1$ for all $j$ and $r$.

The \emph{tropicalization} $\trop(X)$ of $X$ with respect to the model $X^0$ is the tropical curve $\Gamma=(\mathbb G,\ell)$ defined as follows: The underlying graph $\GG$ is the dual graph of the central fiber $\widetilde X$, i.e., the vertices of $\mathbb G$ correspond to irreducible components of $\widetilde{X}$, the edges -- to nodes, the legs -- to marked points, and the natural incidence relation holds. For a vertex $v$ of $\mathbb G$, its weight is defined to be the geometric genus of the corresponding component of the reduction $\widetilde{X}_v$. As for the length function, if $e\in E(\GG)$ is the edge corresponding to a node $z\in \widetilde{X}$, then $\ell(e)$ is defined to be the valuation of $\lambda$, where $\lambda\in K^{00}$ is such that \'etale locally at $z$, the total space of $X^0$ is given by $xy=\lambda$. Although $\lambda$ depends on the \'etale neighborhood, its valuation does not, and hence the length function is well-defined. Finally, notice that the order on the set of marked points induces an order on the set of legs of $\Gamma$. By abuse of notation, we will not distinguish between the tropical curve $\trop(X)$ and its geometric realization.

Next, we explain how to construct the parameterization $h\:\trop(X)\to N_\RR$. Let $\widetilde{X}_v$ be an irreducible component of $\widetilde X$. Then, for any $m \in M$, the pullback $f^*(x^m)$ of the monomial $x^m$ is a non-zero rational function on $X^0$, since the preimage of the big orbit is dense in $X$. Thus, there is $\lambda_m \in K^\times$, unique up to an element invertible in $K^0$, such that $\lambda_mf^*(x^m)$ is an invertible function at the generic point of $\widetilde X_v$. The function $h(v)$, associating to $m \in M$ the valuation $\val(\lambda_m)$, is clearly linear, and hence $h(v)\in N_\RR$. The parameterization $h\:\trop(X)\to N_\RR$ is defined to be the unique piecewise integral affine function with values $h(v)$ at the vertices of $\trop(X)$, whose slopes along the legs satisfy the following: for any leg $l$ and $m\in M$ we have $\frac{\partial h}{\partial \vec l}(m)=-\mathrm{ord}_{\sigma_i} f^*(x^m)$, where $\sigma_i$ is the marked point corresponding to $l$. Then $h\:\trop(X)\to N_\RR$ is a parameterized tropical curve, by \cite[Lemma 2.23]{Tyo12}. The curve $\trop(X)$ (resp. $h\:\trop(X)\to N_\RR$) is called the \emph{tropicalization} of $X$ (resp. $f\: X \to S$) with respect to the model $X^0$. Plainly, the tropical curve $\trop(X)$ is independent of the parameterization, and depends only on $X^0$. If the family $X\to S$ is stable and $X^0$ is the stable model, then the corresponding tropicalization is called simply {\em the tropicalization} of $X$ (resp. $f\: X \to S$). Plainly, $\trop(X)$ (resp. $h\:\trop(X)\to N_\RR$) is stable in this case.

\begin{rem}\label{rem:slopesoflegs}
Let $l$ be the leg corresponding to a marked point $\sigma_i$. If $\sigma_i$ is mapped to the boundary divisor $D$, then, by definition, $f(\sigma_i)$ belongs to a unique component of $D$, and in particular, to the smooth locus of $D$. Thus, the slope $\frac{\partial h}{\partial \vec l}$ is completely determined by the irreducible component of $D$ containing the image of $\sigma_i$. Furthermore, the multiplicity of $\frac{\partial h}{\partial \vec l}$ is the multiplicity of $\sigma_i$ in $f^*D$.
Note also that $\frac{\partial h}{\partial \vec l} = 0$ if and only if $f(\sigma_i)$ is contained in the dense orbit $T\subset S$.
\end{rem}

Next, let us recall the {\em tropicalization map} $\trop\: X(K)\backslash \bigcup_i \sigma_i \to \trop(X)$ from $K$-points of the curve to the tropicalization of the curve. The image $\trop(\eta)$ of a $K$-point $\eta\in X(K)\backslash \bigcup_i \sigma_i$ is defined as follows. We temporarily add $\eta$ as a marked point to obtain the curve $(X,\sigma_\bullet,\eta)$ with marked points $\{\sigma_i\}_i \cup \eta$. We consider the minimal modification $(X^0)'\to X^0$ such that $(X^0)'$ is a prestable model of $(X,\sigma_\bullet,\eta)$. Then $\trop(X)'$ is obtained from $\trop(X)$ by attaching a leg either to an existing vertex or to a new two-valent vertex splitting an edge or a leg of $\trop(X)$. Either way, the geometric realization of $\trop(X)'$ is obtained from $\trop(X)$ by attaching a leg to some point $q\in\trop(X)$. The tropicalization map $\trop\: X(K)\backslash\bigcup_i \sigma_i \to \trop(X)$ sends $\eta$ to the point $q$. 

\begin{rem}\label{rem:pointwise tropicalization}
The tropicalization defined above is compatible with the natural tropicalization of the torus in the sense that the following diagram is commutative:
$$
\begin{tikzcd}
    X\backslash f^{-1}(D)=f^{-1}(T)\rar{\mathrm{trop}}\dar{f}&\mathrm{trop}(X)\dar{h}\\ T\rar{\mathrm{trop}} &N_\mathbb R
\end{tikzcd}
$$
where the map $\trop\:T\to N_\RR$ is defined by $p\mapsto \left(m\mapsto -\val(x^m(p))\right)$. In particular, in the notation of Remark~\ref{rem:slopesoflegs}, if $h$ contracts $l$, then $h(l)=\trop(f(\sigma_i))$.
\end{rem}

Below we will need a more explicit description of $\trop(\eta)\in\trop(X)$.
Let $\psi\: \Spec(K) \to X$ be the immersion of the point $\eta$. Since $X^0 \to \Spec(K^0)$ is proper, $\psi$ admits a unique extension $\psi^0\:\Spec(K^0) \to  X^0$ with image $\overline \eta$. Let $s\in\Spec(K^0)$ be the closed point. Its image in $\widetilde X$ under the map $\psi^0$ is called the {\em reduction} of $\eta$, and is denoted by $\red(\eta)$. If the reduction of $\eta$ is a non-special point of a component $\widetilde X_v$, then $(X^0)'= X^0$ and $\trop(\eta)=v$. If the reduction of $\eta$ is a node $z\in\widetilde X$, then $\trop(\eta)$ belongs to the edge $e$ corresponding to $z$, and we shall specify the distance from $\trop(\eta)$ to the two vertices $v$ and $w$ adjacent to $e$. Let $\lambda \in K^{00}$ be such that $X^0$ is given by $ab=\lambda$ \'etale locally at $z$, and let $\widetilde X_v, \widetilde X_w$ be the branches of $\widetilde X$ associated to the vertices $v$ and $w$, respectively. Without loss of generality, $\widetilde X_v$ is given locally by $a=0$ and $\widetilde X_w$ by $b=0$.

\begin{lem}\label{lem:dist_in_edge}
The distance from $\trop(\eta)$ to $w$ in $e$ is given by $\val(\psi^*(a))$.
\end{lem}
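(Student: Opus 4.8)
The statement is local at the node $z$, so the plan is to reduce immediately to the formal (or \'etale) neighborhood where $X^0$ is given by $ab=\lambda$, with $\widetilde X_v=\{a=0\}$, $\widetilde X_w=\{b=0\}$, and $\val(\lambda)=\ell(e)$. The section $\psi^0\:\Spec(K^0)\to X^0$ has image meeting the closed fiber at $z$, so $\psi^*(a),\psi^*(b)\in K^{00}$ and $\psi^*(a)\cdot\psi^*(b)=\lambda$, whence $\val(\psi^*(a))+\val(\psi^*(b))=\ell(e)$. Thus it suffices to identify $\trop(\eta)$ as the point on $e$ at distance $\val(\psi^*(a))$ from $w$ (equivalently $\val(\psi^*(b))$ from $v$), which is consistent since the two expressions sum to the length of $e$.

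The substantive step is to compute the modified model $(X^0)'\to X^0$ and read off where the new leg attaches. First I would blow up the ideal $(a,b,\psi^*(a))$ — more precisely, take the minimal modification separating the section $\overline\eta$ from the node — so that the central fiber of $(X^0)'$ acquires a chain of $\PP^1$'s inserted at $z$, and $\overline\eta$ meets a specific component of this chain. Concretely, writing $c=\val(\psi^*(a))$ and $c'=\val(\psi^*(b))$ with $c+c'=\val(\lambda)$, the point $\eta$ reduces to a non-special point of the component whose two nodes have, \'etale-locally, equations of the form $a'b'=\lambda'$ and $a''b''=\lambda''$ with $\val(\lambda')=c'$ on the $w$-side and $\val(\lambda'')=c$ on the $v$-side; this is a routine toric/blow-up computation with the monomial valuation, using that $\val(\psi^*(a))=c$ pins down which chart of the modification contains $\overline\eta$. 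Then, by the very definition of the length function in the tropicalization construction (the length of an edge equals the valuation of the smoothing parameter of the corresponding node), the two new edges replacing $e$ in $\trop(X)'$ have lengths $c'$ and $c$, with $c'$ adjacent to $w$. By the definition of the tropicalization map, $\trop(\eta)$ is the new vertex separating these two edges, i.e.\ the point of the original edge $e$ at distance $c'=\val(\psi^*(b))$ from $w$, equivalently at distance $\ell(e)-c'=c=\val(\psi^*(a))$ from $v$ — but I need to double-check the orientation convention: since $\widetilde X_v=\{a=0\}$, the function $a$ vanishes to higher order the closer one is to $v$, so $\val(\psi^*(a))$ small means $\eta$ is close to $v$; hence $\val(\psi^*(a))$ is the distance from $\trop(\eta)$ to $w$, matching the assertion.

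The main obstacle is bookkeeping with the signs and the orientation convention — keeping straight which of $\val(\psi^*(a))$ and $\val(\psi^*(b))$ measures distance to $v$ versus $w$ — together with justifying that the minimal modification has exactly the expected toric local structure (a single inserted chain, possibly a single component, meeting $\overline\eta$ transversally at a smooth point). Both are standard: the toric description of the blow-up of $ab=\lambda$ along a section given by monomial valuations is classical, and one can verify the convention by testing against the limiting cases $\val(\psi^*(a))\to 0$ (so $\eta$ specializes toward $\widetilde X_v$, forcing $\trop(\eta)\to v$, i.e.\ distance to $w$ tends to $\ell(e)$) and $\val(\psi^*(a))=\ell(e)$ (so $\trop(\eta)=w$). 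No deep input is needed beyond the definitions recalled in \S\ref{subsec:tropicalization for curves}.
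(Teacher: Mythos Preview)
Your approach is exactly the paper's: perform the minimal modification (a single blow-up at $z$), write down the local charts, and read off the edge length from the valuation of the smoothing parameter. The paper's proof is three lines: the charts are $at=\mu$ and $bt^{-1}=\lambda\mu^{-1}$; in the first chart the exceptional divisor is $a=0$ and the strict transform of $\widetilde X_w$ is $t=0$; since $\eta$ reduces to a non-special point of the exceptional divisor, $\psi^*(t)$ is a unit, so $\val(\psi^*(a))=\val(\mu)$ is the length of the edge joining the new vertex to $w$.

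That said, your orientation bookkeeping is backwards in two places, and the errors happen to cancel. With $\widetilde X_v=\{a=0\}$, the function $a$ being a unit (i.e.\ $\val(\psi^*(a))$ small) means the reduction of $\eta$ lies where $a\ne 0$, hence on $\widetilde X_w$, not $\widetilde X_v$. So your limiting-case check should read: $\val(\psi^*(a))\to 0$ forces $\trop(\eta)\to w$ (distance to $w$ tends to $0$), and $\val(\psi^*(a))=\ell(e)$ forces $\trop(\eta)=v$. Likewise in your second paragraph, the edge adjacent to $w$ has length $c=\val(\psi^*(a))$, not $c'$. Once you straighten this out, your argument and the paper's are the same.
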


\begin{proof}
The model $(X^0)'$ is a blow up of $X^0$ at the point $z$, and its local charts are given by $at=\mu$ and $bt^{-1}=\lambda\mu^{-1}$. In the first chart, the exceptional divisor is given by $a=0$ and $\widetilde X_w$  by $t=0$. Furthermore, $\psi^*(t)\in K^0$ is invertible since $\eta$ specializes to a non-special point of the exceptional divisor. Thus, $\val(\psi^*(a))=\val(\mu)$ is the distance from $\trop(\eta)$ to $w$ in $e$.
\end{proof}

Finally, if the reduction of $\eta$ is the specialization of a marked point $\sigma_i$, and $l$ is the leg corresponding to $\sigma_i$, then $\trop(\eta)$ belongs to the leg $l$. To specify the distance from $\trop(\eta)$ to the vertex $w$ adjacent to $l$, let $a=0$ be an \'etale local equation of $\sigma_i^0$ in $X^0$ around $\psi^0(s)$. Then,
\begin{lem}\label{lem:dist_in_leg}
The distance from $\trop(\eta)$ to $w$ in $l$ is given by $\val(\psi^*(a))$.
\end{lem}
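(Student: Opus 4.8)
The plan is to mirror the proof of Lemma~\ref{lem:dist_in_edge} almost verbatim, with the branch $\widetilde X_v$ appearing there replaced by the section $\sigma_i^0$: morally, a leg is a ``half-edge'' pointing in the direction of its marked point, so the same blow-up computation should carry over, with $\{a=0\}$ now being the given local equation of $\sigma_i^0$ rather than an equation of a second special-fibre component.

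Concretely, I would argue as follows. \'Etale-locally at $\psi^0(s)=\red(\eta)$ the model $X^0$ is smooth of relative dimension one over $K^0$, hence of the form $\Spec(K^0[a])$, with $\sigma_i^0=\{a=0\}$ and the special fibre being the fibre over the closed point; in particular $\widetilde X_w$ is cut out locally by the maximal ideal of $K^0$. Since $\eta\ne\sigma_i$ we have $\psi^*(a)\in K^\times$, and since $\eta$ specialises into $\{a=0\}=\sigma_i^0$ we have $\psi^*(a)\in K^{00}$, so $\val(\psi^*(a))>0$. Exactly as in Lemma~\ref{lem:dist_in_edge}, the minimal modification $(X^0)'\to X^0$ such that $(X^0)'$ is a prestable model of $(X,\sigma_\bullet,\eta)$ is the blow-up of $X^0$ along an ideal $(a,\mu)$ with $\mu\in K^{00}$, where $\val(\mu)$ is forced by the requirement that $\eta$ specialise to a non-special point of the exceptional divisor $E$. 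In the chart $t:=\mu/a$ one has $at=\mu$, the curve $E$ is cut out by $a$, the node of the special fibre of $(X^0)'$ joining $E$ to the strict transform of $\widetilde X_w$ is given by $at=\mu$, and $\psi^*(t)$ is a unit in $K^0$ because $\eta$ reduces to a non-special point of $E$; therefore $\val(\psi^*(a))=\val(\mu)$. On the tropical side, passing from $X^0$ to $(X^0)'$ subdivides the leg $l$: it creates a new $3$-valent vertex $v_E$ (the vertex of $E$) at distance $\val(\mu)$ from $w$ along $l$, moves the leg of $\sigma_i$ to $v_E$, and attaches the leg of $\eta$ at $v_E$. Hence $\trop(\eta)=v_E$ lies on $l$ at distance $\val(\mu)=\val(\psi^*(a))$ from $w$, as claimed. (The complementary chart $s:=a/\mu$ is only needed to see that the strict transform of $\sigma_i^0$ meets $E$ away from the node and away from $\eta$, i.e.\ that $(X^0)'$ is indeed prestable.)

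The point requiring care is the same one as in Lemma~\ref{lem:dist_in_edge}: that a single blow-up along $(a,\mu)$ already does the job, i.e.\ that the value $\val(\mu)$ imposed by minimality equals $\val(\psi^*(a))$, and that for this choice $\psi^*(t)$ is a unit while the strict transform of $\{a=0\}$ misses the node joining $E$ to $\widetilde X_w$. This is routine bookkeeping, and the only substantive difference from the edge case is that the role of the second branch at the node is now played by the section $\sigma_i^0$ --- which is exactly why the relevant local equation is the one singled out in the statement. I do not anticipate any genuinely new difficulty beyond this translation.
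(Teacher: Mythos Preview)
Your proposal is correct and takes exactly the approach the paper intends: the paper's proof is the single sentence ``The proof is completely analogous to that of Lemma~\ref{lem:dist_in_edge}'', and what you have written is precisely that analogy spelled out, with the second branch $\widetilde X_v$ of the node replaced by the section $\sigma_i^0$ and the local chart $at=\mu$ playing the same role as in the edge case.
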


\begin{proof}
The proof is completely analogous to that of Lemma~\ref{lem:dist_in_edge}.
\end{proof}

\subsection{Tropicalization for one-parameter families of curves}\label{subsec:topicalization of families}
In this section, we describe the tropicalization procedure for one-dimensional families of stable curves with a map to a toric variety. We expect that the main existence statement -- the first part of Theorem~\ref{thm:paramtrop} -- is known to experts; see, in particular, \cite{ACGS}, \cite{CCUW} and \cite{Ran19} for related constructions. We use the local (toroidal) structure of the moduli space of pointed curves and the universal curve over it; cf. \cite[Theorem~5.2]{DM69} and \cite[Theorem~2.7]{Knu83}.
\begin{defn}\label{def: family of paramcur}
A \emph{family of parameterized curves in $S$} consists of the following data:
\begin{enumerate}
    \item a smooth, projective base curve with marked points $(B,\tau_\bullet)$,
    \item a family of stable marked curves $(\cX \to B, \sigma_\bullet)$, smooth over $B':=B \setminus \left( \bigcup_i \tau_i \right)$, and
    \item a rational map  $f \colon \CX \dashrightarrow S$, defined over $B'$, such that for any closed point $b \in B'$, the restriction $f_b\colon\cX_b \to S$ is a parameterized curve.
\end{enumerate}
\end{defn}

\begin{thm}\label{thm:paramtrop}
Let $f \colon \CX \dashrightarrow S$ be a family of parameterized curves, $B^0$ a prestable model of the base curve $(B,\tau_\bullet)$, and assume that the family $(\cX \to B, \sigma_\bullet)$ admits a split stable model $(\cX^0 \to B^0,\sigma_\bullet ^0)$. Let $\Lambda:=\trop(B)$ be the tropicalization of $(B,\tau_\bullet)$ with respect to $B^0$ and assume that $\Lambda$ has no loops. Then there exists a family of parameterized tropical curves $h\: \Gamma_\Lambda\to N_\RR$ such that for any $K$-point $\eta \in B'=B \setminus \left( \bigcup_i \tau_i \right)$, the fiber of $(\Gamma_\Lambda, h)$ over $\trop(\eta)$ is the tropicalization of $f\:\cX_\eta\to S$ with respect to the model $\cX^0|_{\overline \eta}$. Furthermore, the induced map $\alpha\:\Lambda\to M^{\trop}_{g,n,\nabla}$ is either harmonic or locally combinatorially surjective at any vertex $w\in V(\Lambda)$ for which the curve $\Gamma_w$ is weightless and $3$-valent except for at most one $4$-valent vertex.
\end{thm}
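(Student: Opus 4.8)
The plan is to construct the family $h\:\Gamma_\Lambda\to N_\RR$ directly from the split stable model $\cX^0\to B^0$, using the toroidal structure of the moduli stack $\coMgn$ and its universal curve near each fiber. First I would recall that near a point $[\cX^0_{\tilde b}]$ corresponding to a closed point $\tilde b$ of the central fiber of $B^0$, the completed local ring of $\coMgn$ is a power series ring in deformation parameters for the nodes of $\cX^0_{\tilde b}$ together with parameters that vary the moduli of the components, and the smoothing parameters of the nodes are exactly the toroidal coordinates. The classifying map $B^0\to \coMgn$ therefore pulls these parameters back to functions on $B^0$, and for each node $z$ of the fiber $\cX^0_{\tilde b}$ one obtains an element $\lambda_z$ with $\val(\lambda_z)$ giving the length of the corresponding edge. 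Over an open edge $e^\circ$ of $\Lambda$, the central fiber of $\cX^0$ has constant combinatorial type (constant dual graph), and the smoothing parameters $\lambda_z$ are monomials (up to units) in the smoothing parameter $t$ of the node of $B^0$ corresponding to $e$; hence their valuations are integral affine functions of the coordinate on $e$. Similarly the coordinates $\lambda_m$ defining the parametrization $h$ on each component vary integral-affinely along $e$. This gives the datum (\dag): the combinatorial types $\Theta_e,\Theta_w$, the length functions $\ell(\gamma,\cdot)$, the vertex-position functions $h(u,\cdot)$, and the contractions $\varphi_{\vec e}$ coming from the specialization of the dual graph as $q\in e^\circ$ degenerates to a vertex $w$. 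One must check the three compatibilities in Definition~\ref{def:familyofparamtrcur}; these follow from the fact that the tropicalization of each fiber $\cX_\eta$ with respect to $\cX^0|_{\overline\eta}$ is computed by exactly these valuations, together with Lemmas~\ref{lem:dist_in_edge} and \ref{lem:dist_in_leg} to identify fibers over interior points of edges. The splitness hypothesis is used to guarantee that the dual graphs, and the preimages of nodes, are defined over the residue field, so that the combinatorial data is well-defined; the no-loops hypothesis on $\Lambda$ is what allows the datum (\dag) to make sense (a loop would give an edge with both ends at the same vertex).

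Next I would establish the ``furthermore'' part: harmonicity or local combinatorial surjectivity at a vertex $w$ whose fiber $\Gamma_w$ is weightless and $3$-valent with at most one $4$-valent vertex. Fix such a $w$ and let $\Theta$ be the type with $\alpha(w)\in M_{[\Theta]}$. The point is to analyze the algebraic one-parameter family over the formal disk $\Spec(K^0_w)$ at $\tilde b=w$, i.e., the germ of $B^0$ at the component of the central fiber indexed by $w$. The classifying map $\Spec(K^0_w)\to\coMgn$ together with the map to the moduli of parametrized curves (or rather the Kontsevich space, as in the proof of Proposition~\ref{prop:severi variety dimension}) produces, for each germ $\vec e\in\Star(w)$, a tropicalized slope $\frac{\partial\alpha_w}{\partial\vec e}$, which is just the valuation vector of the partial derivatives of the smoothing/position parameters along the branch of $B^0$ corresponding to $\vec e$. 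If the fiber $\Gamma_w$ already lies in the open stratum direction, meaning $\alpha(\Star(w))\subset M_{[\Theta]}$ (so no further degeneration occurs along any germ), I would deduce harmonicity from the balancing-type identity satisfied by the toric/toroidal tropicalization of the base: the product of the relevant smoothing parameters over the branches at $w$ is a global rational function on $B^0$ whose divisor is supported away from $w$, forcing the valuations of its ``derivatives'' to sum to zero. This is the standard mechanism by which a map out of a curve to a toroidal target is harmonic after tropicalization. If instead some germ at $w$ does leave $M_{[\Theta]}$, then $\Gamma_w$ has strictly smaller ``dimension'' as a stratum than the generic fiber, and I must show local combinatorial surjectivity: for every type $\Theta'$ with an inclusion $\iota_{\Theta',E}\:\overline M_\Theta\hookrightarrow\overline M_{\Theta'}$ there is a germ $\vec e$ with $\alpha(\vec e)\cap M_{[\Theta']}\ne\emptyset$. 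Here the hypothesis that $\Gamma_w$ is $3$-valent except for at most one $4$-valent vertex is crucial: the set $E$ of ``extra'' edges whose lengths vanish at $w$ can be resolved in at most two combinatorial ways (splitting the $4$-valent vertex, or having a genuine edge contract), and because $B^0$ is a prestable model whose central fiber has a component for each such resolution appearing generically nearby --- equivalently, because the classifying map $B^0\to\coMgn$ is dominant onto a neighborhood transverse to the boundary, which is where the assumption on $\Gamma_w$ (not too degenerate) enters --- every resolution $\Theta'$ of $\Theta$ is hit by some branch.

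The main obstacle I expect is precisely this last point: showing that every adjacent type $\Theta'$ is realized by some germ at $w$. Abstractly $B^0$ is just some prestable model and there is no a priori reason its central fiber should have a component for each way of smoothing the degenerate fiber $\cX^0_w$; one needs to use that $\cX^0\to B^0$ is the split \emph{stable} model (so the classifying map $B^0\to\coMgn$ is well-defined with image meeting the boundary stratum $M_{[\Theta]}$ transversally in the toroidal sense, up to the allowed $\mathrm{Aut}(\Theta)$-ambiguity), and that the only combinatorial degenerations consistent with $\Gamma_w$ being $3$-valent-plus-at-most-one-$4$-valent are resolutions of a single $4$-valent vertex or a single contracted edge --- a situation rigid enough that transversality of a single branch already forces surjectivity onto the (at most) finitely many nearby strata. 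I would handle this by a local computation in the toroidal coordinates: write $\Spec(K^0_w)\to\coMgn$ in a versal chart, observe that the non-vanishing of the appropriate minor (which holds because $\cX^0$ is the stable model and the family is genuinely $1$-parameter in the relevant directions) forces the image curve germ to meet all coordinate hyperplanes it can, and translate this into the combinatorial surjectivity statement. Everything else --- the construction of the family, integral-affineness, the compatibilities, and the harmonic case --- is a matter of unwinding definitions and the valuation-theoretic description of tropicalization already set up in Section~\ref{subsec:tropicalization for curves}.
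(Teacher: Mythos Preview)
Your construction of the family and the harmonicity argument are essentially the paper's approach: the length and vertex-position functions are integral affine because the local defining functions $g_z$ and $f^*(x^m)$ are monomials (up to units) in the toroidal coordinates on $B^0$, and harmonicity comes from the vanishing of the degree of the rational function $(\pi^{-k_w}g_Z)|_{\widetilde B_w}$ on the projective component $\widetilde B_w$. So far, fine.

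The gap is in your treatment of local combinatorial surjectivity. Your plan is to argue locally at a point of $\widetilde B_w$ via ``transversality in the toroidal sense'' or ``non-vanishing of the appropriate minor'' of the classifying map. This cannot work: a local computation at one point of $\widetilde B_w$ tells you at most that the image of $\alpha$ enters \emph{some} adjacent stratum, not \emph{all} of them. There is no transversality statement available for an arbitrary map from a surface $B^0$ into $\coMgn$, and in any case transversality at one germ would not give you the other two resolutions of the $4$-valent vertex.

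The paper's argument is global on the component $\widetilde B_w$ and uses the hypothesis on $\Gamma_w$ in a sharper way. The dichotomy is whether the classifying map $\chi\:B^0\to\overline M_{g,n+|\nabla|}$ contracts $\widetilde B_w$. If it does, one is in the harmonic case. If it does not, then since every $3$-valent weightless component of $\Gamma_w$ is rigid, the only moduli in the fiber comes from the unique $4$-valent vertex $u$, and $\chi$ factors through the forgetful map to $\overline M_{0,4}\simeq\PP^1$ recording the cross-ratio on the component $\widetilde\cX_u$. This map $\xi\:\widetilde B_w\to\PP^1$ is non-constant by assumption, hence \emph{surjective} because $\widetilde B_w$ is projective. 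Surjectivity onto $\PP^1$ means that all three boundary points of $\overline M_{0,4}$ are hit, i.e., all three splittings of the $4$-valent vertex occur over special points of $\widetilde B_w$; since the slope of the new edge is determined by balancing, these are exactly the three strata $M_{[\Theta']}$ adjacent to $M_{[\Theta]}$. That is the mechanism you are missing: projectivity of $\widetilde B_w$ plus the identification of the relevant moduli with $\PP^1$, not a local toroidal transversality.
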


\begin{defn}
The family $h\:\Gamma_\Lambda \to N_\RR$ constructed in Theorem~\ref{thm:paramtrop} is called the \emph{tropicalization} of  $f \colon \CX \dashrightarrow S$ with respect to $\cX^0 \to B^0$.
\end{defn}

\begin{rem} \label{rem:mon_abstract}
Without the splitness assumption, the dual graphs $\GG_w$ for $w \in V(\Lambda)$ can be defined only up-to an automorphism. In general, one needs to consider families $\Gamma_\Lambda \to \Lambda$ with a {\em stacky structure}, following \cite{CCUW}.
\end{rem}

\begin{cor}\label{cor:tropicalization_inf}
Let $f\:\cX\dashrightarrow S$ be a family of parameterized curves, and $h\: \Gamma_\Lambda \to N_\RR$ its tropicalization with respect to $\cX^0 \to B^0$. Let $\tau\in B(K)$ be a marked point, $l\in L(\Lambda)$ the associated leg, and $\GG_l$ the underlying graph of the tropical curves parameterized by $l$. Assume that the lengths of all but one edge of $\GG_l$ are constant in the family $\Gamma_\Lambda$ and the map $h$ is constant on all vertices of $\GG_l$. Then,
\begin{enumerate}
	\item $\cX_\tau$ has exactly one node. In particular, the geometric genus of $\cX_\tau$ is one less than the geometric genus of a general fiber of $\cX\to B$;
	\item The rational map $f\: \cX \dashrightarrow S$ is defined on the fiber $\cX_\tau$. Furthermore, $f$ maps the generic point(s) of $\cX_\tau$ to the dense orbit $T\subset S$, and $f\:\cX_\tau\to S$ has the same tangency profile as the general fiber $f\:\cX_b\to S$.
\end{enumerate}
\end{cor}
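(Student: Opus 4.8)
The plan is to read off both assertions from the explicit description of the tropicalization map $\trop\: X(K)\setminus\{\sigma_i\}\to\trop(X)$ developed in Section~\ref{subsec:tropicalization for curves}, applied fiberwise, together with the compatibility in Theorem~\ref{thm:paramtrop} that says the fiber of $(\Gamma_\Lambda,h)$ over $\trop(\eta)$ is the tropicalization of $f\:\cX_\eta\to S$ with respect to $\cX^0|_{\overline\eta}$. The key point is that a leg $l\in L(\Lambda)$ corresponds to a marked point $\tau\in B(K)$, and the points of $l$ are exactly the tropicalizations $\trop(\eta)$ of $K$-points $\eta\in B'$ that specialize to $\tau$ in $B^0$; as $\eta$ runs over such points, the position of $\trop(\eta)$ on $l$ measures, via Lemma~\ref{lem:dist_in_edge} and Lemma~\ref{lem:dist_in_leg}, the valuation of a local equation of $\tau^0$ pulled back along $\eta$, and tends to infinity as $\eta\to\tau$. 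Correspondingly, the fiber $\Gamma_q$ for $q\in l$ far out along $l$ has one edge, say $\gamma_0$, whose length grows without bound, while by hypothesis all other edges of $\GG_l$ have constant length and $h$ is constant on every vertex of $\GG_l$.

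For part (1): I would argue that the unbounded growth of $\ell(\gamma_0)$ forces the node of $\cX_\eta$ corresponding to $\gamma_0$ to "disappear" in the limit $\eta\to\tau$, i.e., the stable model $\cX^0$ restricted to $\overline\tau$ has a central fiber $\cX_\tau$ in which the two branches meeting at that node have merged into a single smooth branch. Concretely, the length of $\gamma_0$ in the fiber over $\trop(\eta)$ is $\val(\lambda_{\gamma_0})$ where, étale-locally, the node is cut out by $ab=\lambda_{\gamma_0}$ in the total space of $\cX^0|_{\overline\eta}$; since this total space is a family over $\Spec(K^0)$ obtained by base change from $\cX^0\to B^0$, writing $t$ for a uniformizer on $B^0$ at $\tau$ and using that $\ell(\gamma_0)(q)\to\infty$ linearly in $\val(t\text{-coordinate of }\eta)$, one sees that in $\cX^0$ itself the corresponding node is a \emph{smooth point} of the central fiber $\cX_\tau$: the total space near it is regular with central fiber cut out by a single equation. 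All the other edges of $\GG_l$ have constant, finite length, so they correspond to genuine nodes of $\cX_\tau$ — but the stability/splitness bookkeeping, combined with the fact that $\GG_l$ is (by construction of $\Lambda$) the dual graph of $\cX_\tau$, shows there can be no others; hence $\cX_\tau$ has exactly one node (namely the one recorded by the bounded edges collapsing — more precisely the dual graph of $\cX_\tau$ is $\GG_l$ with $\gamma_0$ removed and its endpoints identified, which has first Betti number one less, giving the genus drop). The geometric genus statement then follows since $\pa(\cX_\tau)=\pa(\cX_b)$ by flatness while the single node contributes a $\delta$-invariant $1$, and the normalization of $\cX_\tau$ is connected.

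For part (2): since $f$ is a rational map defined over $B'$ and $\cX^0\to B^0$ is the split stable model, $f$ extends over $\cX_\tau$ precisely when the images of the generic points of $\cX_\tau$ avoid the codimension-$\ge 2$ orbits and land, generically, in the dense torus $T$; this is exactly the information encoded by the map $h$ on $\GG_l$. Because $h$ is constant on all vertices of $\GG_l$ and the slopes along the legs of $\GG_l$ agree with those of the general fiber (the extended degree $\overline\nabla$ is constant in a family of parametrized tropical curves), the vertex of $\GG_l$ lands at a finite point of $N_\RR$ rather than being pushed to a boundary stratum, which by Remark~\ref{rem:pointwise tropicalization} and the commutative diagram there means the generic point of the corresponding component of $\cX_\tau$ maps into $T$; so $f$ is defined on $\cX_\tau$ and $f(\cX_\tau)$ meets $T$ in a dense open set. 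The tangency profile is determined by the orders of vanishing $\ord_{\sigma_i}f^*(x^m)$ along the sections, equivalently by the slopes $\frac{\partial h}{\partial\vec l_i}$ of the non-contracted legs by Remark~\ref{rem:slopesoflegs}; these are part of the extended degree $\overline\nabla$, which is constant across the family $\Gamma_\Lambda$, so $f\:\cX_\tau\to S$ has the same tangency profile as $f\:\cX_b\to S$ for general $b$.

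The main obstacle I anticipate is part (1), specifically the passage from "one edge length in the tropical fiber over $l$ grows to infinity" to "the corresponding node smooths out in the algebraic central fiber $\cX_\tau$ over $\tau\in B$." This requires being careful about which model one is tropicalizing with respect to: Theorem~\ref{thm:paramtrop} tropicalizes $\cX_\eta$ with respect to $\cX^0|_{\overline\eta}$, a base change of $\cX^0\to B^0$ to $\Spec(K^0)$ via the point $\overline\eta$, and one must track how the node-smoothing parameter $\lambda_{\gamma_0}$ depends on $\eta$ — it is the pullback to $\overline\eta$ of a function on $B^0$ whose valuation forces $\ell(\gamma_0)$ to be unbounded on $l$ exactly when that function is (a unit times a power of) a local uniformizer at $\tau$, which is precisely the statement that the node is not present in $\cX_\tau$ but appears only after an infinitesimal deformation away from $\tau$. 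Making this comparison rigorous — essentially that a "leg of varying length $\to\infty$" in the family $\Gamma_\Lambda$ detects a vanishing cycle that dies in the special fiber of $\cX\to B$ over the marked point — is the crux, and I would handle it by a direct local computation with the toroidal charts of $\overline{\CM}_{g,n}$ and its universal curve, as in \cite[Theorem~5.2]{DM69} and \cite[Theorem~2.7]{Knu83}, which is exactly the input Theorem~\ref{thm:paramtrop} was built on.
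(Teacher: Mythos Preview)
Your argument for part~(1) has the correspondence between edge-length behavior and node persistence exactly backwards, and this is a genuine error, not a gap.

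Recall from Case~2 of Step~2 in the proof of Theorem~\ref{thm:paramtrop} that near the reduction $s'=\red(\tau)$, the model $B^0$ is described by $\pi a=0$ with $a$ a local equation of the section $\tau$, and near a node $z$ of $\widetilde\cX_{s'}$ the total space $\cX^0$ is given \'etale locally by $xy=g_z$. Formula~\eqref{eq:interpolation_leg} says $\ell(\gamma,q)=k_w\val(\pi)+k_\tau\val(\psi^*(a))$, where $k_\tau$ is the order of vanishing of $g_z$ along $\tau$. Thus the length of $\gamma$ varies along the leg $l$ if and only if $k_\tau>0$, i.e.\ $g_z(\tau)=0$, i.e.\ the fiber $\cX_\tau$ (which is locally $xy=g_z(\tau)$) \emph{has} a node at $z$. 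Conversely, constant edge length means $k_\tau=0$, so $g_z(\tau)\ne 0$ and the corresponding node of $\widetilde\cX_{s'}$ is \emph{smoothed} in $\cX_\tau$. Your ``the unbounded growth of $\ell(\gamma_0)$ forces the node \dots\ to disappear in the limit'' is therefore the opposite of what happens; intuitively, infinite edge length in the limit means the smoothing parameter of that node is going to zero, so the node is what survives. The paper then argues the converse: any node of $\cX_\tau$ specializes to some node $z'$ of $\widetilde\cX_{s'}$, and the corresponding $g_{z'}$ must vanish at $\tau$, hence $\ell(\gamma',\cdot)$ is nonconstant, hence $\gamma'=\gamma_0$ by hypothesis. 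This gives exactly one node.

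For part~(2), your argument is in the right spirit but not quite on target. The point is not that $h(u)$ is a finite point of $N_\RR$ (it always is), but that $h(u,\cdot)$ is \emph{constant along $l$}. By~\eqref{eq:parametrized curves along leg} the slope of $h(u,\cdot)(m)$ along $l$ equals the order of pole $r_\tau$ of $f^*(x^m)$ along the component $\cX_{\tau,u}$; constancy means $r_\tau=0$, so $f^*(x^m)$ is regular and invertible at the generic point of each component of $\cX_\tau$. Normality of $\cX^0$ then extends $f$ to all of $\cX_\tau\setminus\bigcup_j\sigma_j$, and a direct check at each $\sigma_j(\tau)$ using the affine toric chart $S_j$ finishes the argument. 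The tangency-profile claim is as you say.
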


\begin{rem}
Under the assumptions of the corollary, the edge of varying length gets necessarily contracted by the map $h$.
\end{rem}

The rest of the section is devoted to the proof of Theorem~\ref{thm:paramtrop} and Corollary~\ref{cor:tropicalization_inf}. Since $K$ is the algebraic closure of a complete discretely valued field $F$ as in Section~\ref{subsec:notation in tropicalization}, any $K$-scheme of finite type is defined over a finite extension $F'$ of $F$, which is also a complete discretely valued field since so is $F$. Thus, we may view a $K$-scheme of finite type as the base change of a scheme over a finite extension of $F$. In the proofs below, we will work over $F'$ in order to have a well behaved total space of the families we consider. To ease the notation, we will assume that all models and points we are interested in are defined already over $F^0$, but will not assume that the valuation of the uniformizer $\pi$ is one.

In particular, we may assume that $B^0,\cX^0, \cX^0\to B^0$, and $\overline \eta$ are all defined over $F^0$. In the proof, we will use the following notation: $\psi\: \Spec(F) \to B$ will denote the immersion of the point $\eta$, $s:=\red(\eta)\in \widetilde B$ its reduction, and $\widetilde{\cX_s}$ the corresponding fiber. We set $D_{B^0}:=\widetilde B \cup \left(\bigcup_i \tau_i\right)$ and $D_{\cX^0}:=\widetilde \cX \cup \left ( \bigcup_i \cX_{\tau_i} \right) \cup \left( \bigcup_j \sigma_j \right)$, where $\cX_{\tau_i}$ are the fibers over the marked points $\tau_i \in B(F)$. Then the pullback of any monomial function $f^*(x^m)$ is regular and invertible on $\cX^0 \setminus D_{\cX^0}$ by the assumptions of the theorem.

\subsection{Proof of Theorem~\ref{thm:paramtrop}}

\subsubsection*{Step 1: The tropicalization $h_\eta\:\trop(\cX_\eta)\to N_\RR$ depends only on $\trop(\eta)\in \Lambda$.}\label{step 1}
Set $q:=\trop(\eta)$. After modifying $B^0$, we may assume that $s\in \widetilde B$ is non-special and, by definition, this modification depends only on $\trop(\eta)$, cf. Section~\ref{subsec:tropicalization for curves}. Let $\widetilde B_w$ be the component containing $s$, and $w=q$ the corresponding vertex of $\Lambda$. By definition, the underlying graph of $\trop(\cX_\eta)$ is the dual graph $\GG_s$ of $\widetilde{\cX}_s$. Let $z$ be a node of $\widetilde{\cX}_s$. Then there exist an \'etale neighborhood $U$ of $s$ in $B^0$ and a function $g_z\in \CO_{U}(U)$ vanishing at $s$ such that the family $\cX^0\times_{B^0}U$ is given by $xy=g_z$  \'etale locally near $z$. After shrinking $U$, we may assume that the latter is true in a neighborhood of any node of $\widetilde{\cX}_s$ over $U$. Furthermore, since $s\in \widetilde B$ is non-special, we may choose $U$ such that the pullback of $D_{B^0}$ to $U$ is $\widetilde B_w$.

By assumption, $\cX$ is smooth over $B'$, and hence each $g_z$ is invertible on the complement of $D_{B^0}$, i.e., away from $\widetilde B_w$. However, $U$ is normal, and $g_z(s)=0$ for all nodes $z$. Thus, all the $g_z$'s vanish identically along $\widetilde B_w$, which implies that the dual graph $\GG_{s}$ of $\widetilde{\cX}_s$ is \'etale locally constant over $\widetilde{B}_w$. We claim that the lengths of the edges of $\GG_{s}$ are also \'etale locally constant. Indeed, pick a node $z\in \widetilde{\cX}_s$. Since $g_z$ is invertible away from $\widetilde B_w$, and $\pi$ vanishes to order one along $\widetilde B_w$, there exists $k_w \in \NN$ such that  $\pi^{-k_w}g_z $ is regular and invertible in codimension one, and hence, by normality, it is regular and invertible on $U$. Thus, the length of the edge of $\GG_s$ corresponding to $z$ is given by $\val(\psi^* g_z)=k_w\val(\pi)$, cf. Section~\ref{subsec:tropicalization for curves}, which is \'etale locally constant around $s$. Finally, since $\widetilde \cX|_{\widetilde{B}_w} \to \widetilde{B}_w$ is split, the identifications of graphs $\GG_{s}$ with the dual graph $\GG_w$ of the generic fiber is canonical. Hence the tropicalization $\trop(\cX_\eta)$ depends only on $q$, and we set $\Gamma_q:=\trop(\cX_\eta)$.

It remains to check that the parameterization $h_\eta$ also depends only on $q$. Since the tangency profile of $\cX_\eta\to S$ is independent of $\eta$, so are the slopes of the legs of $\trop(\cX_\eta)$, cf. Remark~\ref{rem:slopesoflegs}. Let $u$ be a vertex of $\GG_w$, and $\widetilde \cX_u\subset\widetilde \cX$ the component corresponding to $u$ that dominates $\widetilde B_w$. Pick any $m\in M$. Since $f^*(x^m)$ is regular and invertible away from $D_{\cX^0}$, $\cX^0$ is normal, and $\pi$ vanishes to order one along $\widetilde\cX_u$, it follows that there is an integer $k_{u}\in\ZZ$ such that $\pi^{k_{u}}f^*(x^m)$ is regular and invertible at the generic point of $\widetilde \cX_u$. Thus, $h_\eta(u)(m)=k_{u}\val(\pi)$, and hence $h_\eta$ depends only on $q$. We set $h_q:=h_\eta$, and obtain a parameterized tropical curve $h_q\:\Gamma_q\to N_\RR$ that is canonically isomorphic to the tropicalization of $f\:\cX_\eta\to S$ for any $K$-point $\eta\in B'$ satisfying $\trop(\eta)=q$.

\subsubsection*{Step 2: $h\:\Gamma_\Lambda\to N_\RR$ is a family of parameterized tropical curves.}\label{step 2} We need to show that the fiberwise tropicalizations constructed in Step 1 form a family of parameterized tropical curves; that is, we need to specify a datum (\dag) as in \S~\ref{subsubsec:families} that satisfies the conditions of Definition~\ref{def:familyofparamtrcur}. Since the tropicalizations of $K$-points give rise only to rational points in the tropical curve, we will work with rational points $\Lambda_\QQ\subset \Lambda$, and in the very end extend the family by linearity to the non-rational points of $\Lambda$.

By Step 1, the extended degree of $h_q\:\Gamma_q\to N_\RR$ is independent of $q\in \Lambda$, and will be denoted by $\overline \nabla$. Furthermore, if $q$ is an inner point of some $e \in \overline E(\mathbb G_\Lambda)$, and $\trop(\eta)=q$, then $s$ is the special point of $\widetilde{B}$ corresponding to $e$, and the underlying graph of $\Gamma_q$ is the dual graph of the reduction $\widetilde{\cX}_s$. Therefore, the underlying graph of $\Gamma_q$ depends only on $e$, and will be denoted by $\GG_e$. We will see below that in this case, the slopes of the bounded edges of $\Gamma_q$, also depend only on $e$. Hence so does the combinatorial type of $(\Gamma_q,h_q)$.

Next, we specify the contraction maps. Let $s'\in \widetilde B_w$ be a node of $\widetilde B$ corresponding to an edge $e$ of $\Lambda$, and $\GG_e:=\GG_{s'}$ the corresponding weighted graph. Any degeneration of stable curves corresponds to a weighted edge contraction on the level of dual graphs. Thus, for the \'etale local branch of $\widetilde B_w$ at $s'$ corresponding to $\vec{e} \in \Star(w)$, we get the contraction $\varphi_{\vec{e}}\: \GG_{e} \to \GG_w$ between the associated dual graphs. Analogously, for the reduction $s'\in \widetilde B_w$ of a marked point with corresponding leg $l\in L(\Lambda)$, we obtain the desired contraction $\varphi_{\vec l}\: \GG_{l} \to \GG_w$.

Finally, let us define the functions $\ell$ and $h$. Let $w \in V(\Lambda)$ be a vertex, and $\vec e\in \Star(w)$ an edge or a leg. We define the functions $\ell(\gamma,\cdot)\:e\cap\QQ\to \RR_{\ge 0}$ and $h(u,\cdot):e\cap\QQ\to N_\RR$ for each $\gamma\in E(\GG_e)$ and $u\in V(\GG_e)$ as follows: if $q \in e^{\circ}\cap\QQ$, then $\ell(\gamma, q):=\ell_q(\gamma)$ and $h(u, q):=h_q(u)$, and if $q$ is the tail of $\vec e$, then $\ell(\gamma, q):=\ell_q(\varphi_{\vec e}(\gamma))$ and $h(u, q):= h_q(\varphi_{\vec e}(u))$.

To finish the proof of Step 2, it remains to show that the functions $\ell(\gamma,\cdot)\:e\cap\QQ\to \RR_{\ge 0}$ and $h(u,\cdot)\:e\cap\QQ\to N_\RR$ are restrictions of integral affine functions. Indeed, if this is the case, then the family extends to the irrational points of $e$. Furthermore, it follows that the slope of $h_q$ along any $\gamma\in \overline{E}(\GG_e)$ is continuous on $e^\circ$, and obtains values in $N$ at the rational points $q\in e^\circ\cap\QQ$. Therefore, it is necessarily constant on $e^\circ$, and hence so is the combinatorial type.

Notice that for the function $h$, the assertion can be verified separately for the evaluations $h(u, \cdot)(m)$ of $h(u, \cdot)$ at single monomials $m$. Thus, we fix an arbitrary $m\in M$ for the rest of the proof. We also fix an edge $\gamma \in E(\GG_q)$ and a vertex $u\in V(\GG_q)$, and let $z \in \widetilde \cX_s$ denote the node corresponding to $\gamma$ and $\widetilde \cX_{s, u}$ the irreducible component of $\widetilde \cX_s$ corresponding to $u$; see Figure~\ref{fig:degen2}. There are two cases to consider.

\tikzset{every picture/.style={line width=0.75pt}} %set default line width to 0.75pt
\begin{figure}[h]
\begin{tikzpicture}[x=0.5pt,y=0.5pt,yscale=-.8,xscale=.8]
\import{./}{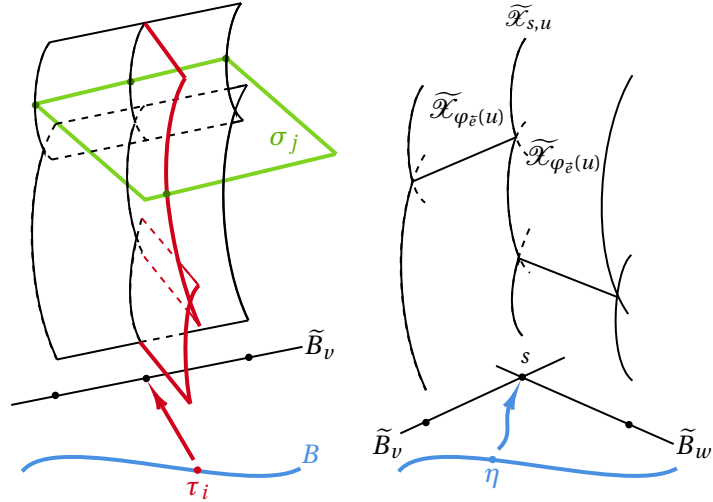}
\end{tikzpicture}		
\caption{The left picture shows the components of $D_{\cX^0}$. On the right, we indicate the irreducible components of $\widetilde \cX$.}\label{fig:degen2}
\end{figure}

Case 1: {\em $s\in\widetilde B$ is a node.} Let $e$ be the edge of $\Lambda$ corresponding to $s$. Then $q\in e^\circ$ and $\GG_q=\GG_e$. After shrinking $U$, we may assume that it is given by $ab=\pi^{k_s}$ for some positive integer $k_s$, and $\widetilde{B}$ has two components $\widetilde{B}_v$ and $\widetilde{B}_w$ in $U$ given by $a=0$ and $b=0$, respectively. Thus, the length $\ell_\Lambda(e)$ is given by $k_s\val(\pi)$.

We start with the function $\ell$. Since $g_z$ is regular and vanishes only along $\widetilde{B}_v \cup \widetilde{B}_w$ in $U$, there exist $k_a,k_b,k_\pi\in \NN$ such that $a^{-k_a}b^{-k_b}\pi^{-k_\pi}g_z$ is regular and invertible on $U$. Therefore,
\begin{equation}\label{eq:edgelength} \ell( \gamma, q)=\val(\psi^*g_z)=k_a\val(\psi^*a)+k_b\val(\psi^*b)+k_\pi\val(\pi)=(k_a-k_b)\val(\psi^*a)+k_b\ell_\Lambda(e)+k_\pi\val(\pi).
\end{equation}
By Lemma~\ref{lem:dist_in_edge}, $\val(\psi^*(a))$ is the distance of $q$ from $w$ in $e$. Thus, \eqref{eq:edgelength} defines an integral affine function of $q$ on $e$ with slope $k_a-k_b$; see Figure~\ref{fig:trop1} for an illustration.

\tikzset{every picture/.style={line width=0.75pt}} %set default line width to 0.75pt
\begin{figure}[ht]
\begin{tikzpicture}[x=0.5pt,y=0.5pt,yscale=-0.8,xscale=0.8]
\import{./}{figtrop1.tex}
\end{tikzpicture}		
\caption{The tropicalization of $\cX \to B$ near a node of $\widetilde B$.
}\label{fig:trop1}
\end{figure}

It remains to show that the value of \eqref{eq:edgelength} for $\val(\psi^*(a)) = 0$ is $\ell_w(\varphi_{\vec e}(\gamma))$, where the orientation on $e$ is such that $w$ is its tail. By Step 1, $\ell_w(\varphi_{\vec e}(\gamma))=k_w\val(\pi)$. Since $a$ is invertible at the generic point of $\widetilde{B}_w$, the order of vanishing $k_w$ of $g_z$ at the generic point of $\widetilde{B}_w$ is equal to the order of vanishing of $b^{k_b}\pi^{k_\pi}$, which in turn is the order of vanishing of $\pi^{k_bk_s+k_\pi}$. Thus,
\begin{equation}\label{eq:slope}
\ell_w(\varphi_{\vec e}(\gamma))=k_w\val(\pi)=(k_bk_s+k_\pi)\val(\pi)=k_b\ell_\Lambda(e)+k_\pi\val(\pi),
\end{equation}
as needed.

Next we consider the function $h$. For the two orientations $\vec e$ and $\cev e$ on $e$, let $\varphi_{\vec e}\:\GG_e \to \GG_w$ and $\varphi_{\cev e}\:\GG_e \to \GG_v$ be the weighted edge contractions defined above. As in Step 1, we have irreducible components $\widetilde \cX_{\varphi_{\cev e}(u)}$ and $\widetilde \cX_{\varphi_{\vec e}(u)}$ of $\widetilde \cX$; they are the components of $\widetilde \cX$ containing $\widetilde \cX_{s,u}$ and supported over $\widetilde B_v$ and $\widetilde B_w$, respectively; see Figure~\ref{fig:degen2}. Since $f^*(x^m)$ is regular and invertible outside $D_{\cX^0}$, there exist $r_a,r_b,r_\pi\in\ZZ$ such that $a^{r_a}b^{r_b}\pi^{r_\pi}f^*(x^m)$ is regular and invertible at the generic points of $\widetilde \cX_{\varphi_{\cev e}(u)}$ and $\widetilde \cX_{\varphi_{\vec e}(u)}$, and hence, by normality of $\cX$, it is regular and invertible in a neighborhood of the generic point of $\widetilde \cX_{s,u}$. Therefore,

\begin{equation} \label{eq:interpolation_edge_map}
	h(u, q)(m)= r_a \val(\psi^*(a)) + r_b \val(\psi^*(b)) + r_\pi \val(\pi) = (r_a - r_b) \val(\psi^*(a)) + r_b \ell_\Lambda(e) + r_\pi \val(\pi).
\end{equation}
	By Lemma~\ref{lem:dist_in_edge}, $\val(\psi^*(a))$ is the distance of $q$ from $w$ in $e$. Thus, \eqref{eq:interpolation_edge_map} defines an integral affine function on $e$ with slope $r_a-r_b$. Furthermore, the value of this function for $\val(\psi^*(a)) = 0$ is $h_w(\varphi_{\vec e}(u))$. Indeed, since $a$ is regular and invertible at the generic point of $\widetilde B_w$, it is regular and invertible at the generic point of $\widetilde{\cX}_{\varphi_{\vec e}(u)}$. Thus, by the definition of $h_w$,
$$h_w(\varphi_{\vec e}(u))=\val\left(b^{r_b}\pi^{r_\pi}\right)=\val\left(\pi^{k_sr_b+r_\pi}\right)=(k_sr_b+r_\pi)\val(\pi)=r_b\ell_\Lambda(e)+r_\pi\val(\pi),$$
as needed. Similarly, the value of \eqref{eq:interpolation_edge_map} for $\val(\psi^*(a)) = \ell_\Lambda(e)$ is $h_v(\varphi_{\cev e}(u))$.

\vspace{5px}

Case 2: {\em $s\in \widetilde B$ is the reduction of a marked point}. Let $\tau\in B$ be the marked point with reduction $s$, $l$ the associated leg of $\Lambda$, and $\widetilde B_w$ the component of $\widetilde B$ containing $s$. Then $q\in l^\circ$ and $\GG_q=\GG_l$. After shrinking $U$, we may assume that $\widetilde B \cup \tau$ in $U$ is given by $\pi a=0$.

Again, we start with the function $\ell$. Since $g_z$ is regular in $U$ and vanishes only along $\widetilde B \cup \tau$, there exist $k_w, k_\tau\in \NN$ such that $\pi^{- k_w}a^{-k_{\tau}}g_z$ is regular and invertible on $U$. The length $\ell(\gamma, q)=\val(\psi^*g_z)$ is thus given by
\begin{equation} \label{eq:interpolation_leg}
	\ell(\gamma, q) = k_w \val(\pi) + k_{\tau} \val(\psi^*(a)).
\end{equation}
By Lemma~\ref{lem:dist_in_leg}, $\val(\psi^*(a))$ is the distance of $q$ from $w$ in $l$. Thus, \eqref{eq:interpolation_leg} defines an integral affine function on $l$ with slope $k_{\tau}$ and, by Step 1, its value at  $\val(\psi^*(a)) = 0$ is $k_w\val(\pi)=\ell_w(\varphi_{\vec l}(\gamma))$, as required; see Figure~\ref{fig:trop2} for an illustration.

\tikzset{every picture/.style={line width=0.75pt}} %set default line width to 0.75pt
\begin{figure}[ht]
\begin{tikzpicture}[x=0.5pt,y=0.5pt,yscale=-.8,xscale=.8]
\import{./}{figtrop2.tex}
\end{tikzpicture}		
\caption{The tropicalization of $\cX \to B$ near a marked point of $\widetilde B$.}\label{fig:trop2}
\end{figure}

We proceed with the function $h$. Let $\varphi_{\vec l}\: \GG_l \to \GG_w$ be the edge contraction defined above. As before, we have a unique component $\widetilde \cX_{\varphi_{\vec l}(u)}$ of $\widetilde \cX$ containing $\widetilde \cX_{s,u}$ and supported over $\widetilde B_w$. Similarly, there is a unique component $\cX_{\tau,u}$ of $\cX_\tau$ containing $\widetilde \cX_{s,u}$ in its closure. Since $f^*(x^m)$ is regular and invertible away from $D_{\cX^0}$, there exist $r_w, r_\tau\in \ZZ$ such that $\pi^{r_w}a^{r_{\tau}}f^*(x^m)$ is regular and invertible on $U$. Therefore,
	\begin{equation} \label{eq:parameterized curves along leg}
	h(u, q)(m) = r_{\tau} \val(\psi^*(a)) + r_w \val(\pi).
	\end{equation}
By Lemma~\ref{lem:dist_in_leg}, $\val(\psi^*(a))$ is the distance of $q$ from $w$ in $l$. Thus, \eqref{eq:parameterized curves along leg} defines an integral affine function on $l$ with slope $r_{\tau}$, and the value $h_w(\varphi_{\vec l}(u))(m)$ at $w$, since $w$ is given by $\val(\psi^*(a)) = 0$, cf. Step 1.

\subsubsection*{Step 3: The harmonicity and the local combinatorial surjectivity of the map $\alpha$.}  Let $w\in V(\Lambda)$ be a vertex, $\Theta$ the combinatorial type of $\alpha(w)$, and $\mathcal C\to \overline{\mathcal M}_{g,n+|\nabla|}$ the universal curve. Consider the natural map to the coarse moduli space $\chi\:B^0\to \overline{M}_{g,n+|\nabla|}$. There are two cases to consider:

\vspace{5px}

Case 1: {\em $\chi$ contracts $\widetilde B_w$.} Set $p:=\chi(\widetilde B_w)$. Since $\widetilde \cX \to \widetilde B$ is split, it follows that the restriction of $\widetilde{\cX}$ to $\widetilde{B}_w$ is the product $\widetilde{B}_w\times \mathcal C_p$. Furthermore, $\GG_w$ is the dual graph of $\mathcal C_p$, and $\alpha$ maps $\Star(w)$ to $M_{[\Theta]}$. Since $\alpha$ is piecewise integral affine, the map lifts to a map to $M_\Theta$, which we denote by $\alpha_w$. Let us show that $\alpha$ is harmonic at $w$, i.e.,
\[
\sum_{\vec{e}\in \Star(w)}\frac{\partial \alpha_w}{\partial \vec{e}}=0.
\]
The latter equality can be verified  coordinatewise. Recall that the integral affine structure on $M_\Theta$ is induced from $N_\mathbb R^{|V(\GG_w)|}\times \mathbb R^{|E(\GG_w)|}$. Let $\gamma\in E(\GG_w)$ be an edge corresponding to a node $z\in\mathcal C_p$, and assume for simplicity that $\gamma$ is not a loop. The case of a loop can be treated similarly, and we leave it to the reader. Let $u,u'\in V(\GG_w)$ be the vertices adjacent to $\gamma$, $\mathcal C_u$ and $\mathcal C_{u'}$ the corresponding components of $\mathcal C_p$, and $\widetilde{\cX}_u, \widetilde{\cX}_{u'}, Z$ the pullbacks of $\mathcal C_u, \mathcal C_{u'}, z$ to $\widetilde{\cX}$, respectively. The universal curve $\mathcal C$ is given \'etale locally at $z$ by $xy=m_z$, where $m_z$ is defined on an \'etale neighborhood of $p$, and vanishes at $p$. Thus, $\cX^0$ is given by $xy=g_Z:=\chi^*m_z$ in an \'etale neighborhood of $Z$. Notice that we constructed a function  defined in a neighborhood of the whole family of nodes $Z$, which globalizes the local construction of Step 1 in the particular case we consider here.

To prove harmonicity with respect to the coordinate $x_{\gamma}$ corresponding to $\gamma$, we now consider $(\pi^{-k_w}g_Z)|_{\widetilde B_w}$, where, as in Step 1, $k_w$ denotes the order of vanishing of $g_Z$ at the generic point of $\widetilde B_w$. Thus,  $(\pi^{-k_w}g_Z)|_{\widetilde B_w}$ is a non-zero rational function on $\widetilde B_w$, and hence the sum of orders of zeroes and poles of this function is zero. We claim that this is precisely the harmonicity condition we are looking for. Indeed, since $\cX^0$ is smooth over $B'$, the function $g_Z$ is regular and invertible away from $D_{B^0}$, and hence, as usual, $(\pi^{-k_w}g_Z)|_{\widetilde B_w}$ has zeroes and poles only at the special points of $\widetilde{B}_w$. For $\vec{e}\in\Star(w)$, let $s$ be the corresponding special point. If $s$ is a node of $\widetilde{B}$, and $\widetilde{B}_v$ is the second irreducible component containing $s$, then pick $k_s\in\NN$ as in Case 1 of Step 2, i.e., such that $B^0$ is given \'etale locally at $s$ by $ab=\pi^{k_s}$. Then, using Case 1 of Step 2,
\[
\frac{\partial x_{\gamma}}{\partial \vec{e}}=\frac{\ell_v(\gamma)-\ell_w(\gamma)}{\ell_\Lambda(e)}=\frac{k_v-k_w}{k_s},
\]
which is the order of vanishing of $(\pi^{-k_w}g_Z)|_{\widetilde B_w}$ at $s$. Similarly, if $s$ is the specialization of a marked point, $\frac{\partial x_{\gamma}}{\partial \vec{e}}$ is again the order of vanishing of $(\pi^{-k_w}g_Z)|_{\widetilde B_w}$ at $s$.
	
Next, let us show harmonicity with respect to a coordinate $n_u$ for $u\in V(\GG_w)$, that is, we need to show that
\[
\sum_{\vec{e}\in \Star(w)}\frac{\partial h(u, \cdot)(m)}{\partial \vec{e}}=0
\]
for any $m\in M$. Let $k_u\in\ZZ$ be such that $\pi^{k_u}f^*(x^m)$ is regular and invertible at the generic point of $\widetilde \cX_{u}$. Then the divisor $D_{u,m}$ of $(\pi^{k_u}f^*(x^m))|_{\widetilde \cX_{u}}$ has horizontal components supported on the special points of the fibers of $\widetilde \cX\to \widetilde{B}$, and vertical components supported on the preimages of the special points of $\widetilde{B}_w$. Pick a general point $c\in \mathcal C_u$, and consider the horizontal curve $\widetilde{B}_{w,c}:=\widetilde{B}_w\times \{c\}\subset \widetilde \cX_{u}$. It intersects no horizontal components of $D_{u,m}$, and intersects its vertical components transversally.

Let $s\in \widetilde{B}_w$ be a special point, and $\widetilde{\cX}_{s,u}$ the corresponding vertical component of $D_{u,m}$. Its multiplicity in $D_{u,m}$ is equal to the multiplicity of the point $(s,c)$ in the divisor of $(\pi^{k_u}f^*(x^m))|_{\widetilde{B}_{w,q}}$, and hence the sum over all special points of $\widetilde{B}_w$ of these multiplicities vanishes. On the other hand, we claim that the multiplicity of $\widetilde{\cX}_{s,u}$ in $D_{u,m}$ is nothing but $\frac{\partial h(u, \cdot)(m)}{\partial \vec{e}}$, where $\vec{e}\in \Star(w)$ corresponds to the special point $s$. Indeed, if $s$ is the specialization of a marked point $\tau$, then $\frac{\partial h(u, \cdot)(m)}{\partial \vec{e}}=r_\tau$, where $r_\tau$ is the order of pole of $f^*(x^m)$ at $\tau$ as in Case 2 of Step 2. Since $\pi$ does not vanish at $\tau$, we conclude that $\frac{\partial h(u, \cdot)(m)}{\partial \vec{e}}$ is the multiplicity of the reduction $\widetilde{\cX}_{s,u}$ of $\cX_\tau$ in $D_{u,m}$, as asserted. A similar computation shows that if $s$ is a node of $\widetilde{B}$, then $\frac{\partial h(u, \cdot)(m)}{\partial \vec{e}}$ is again the multiplicity of $\widetilde{\cX}_{s,u}$ in $D_{u,m}$. We leave the details to the reader.

%\vspace{5px}

\tikzset{every picture/.style={line width=0.75pt}}
\begin{figure}[ht]
\begin{tikzpicture}[x=0.7pt,y=0.7pt,yscale=-.65,xscale=.65]
\import{./}{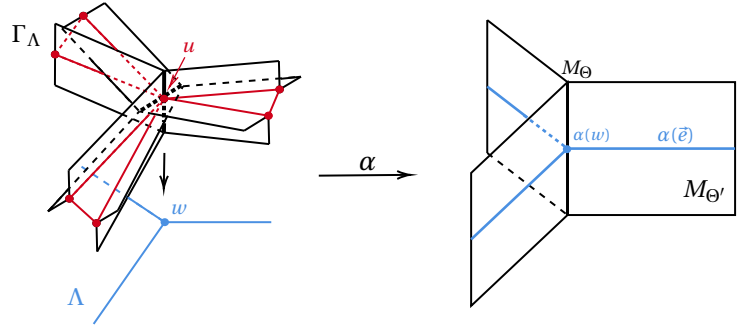}
\end{tikzpicture}		
\caption{An illustration for Case 2 of Step 3 in the proof of Theorem \ref{thm:paramtrop}. Since $\Gamma_\Lambda$ is not embeddable in $\RR^3$, the picture is a ``cartoon''. The valency of $w$ may be greater than three, but for each resolution of the $4$-valent vertex, there is at least one germ in $\Star(w)$ as in the picture.}\label{fig:alpha}
\end{figure}

Case 2: {\em $\chi$ does not contract $\widetilde B_w$.} By the assumptions of the theorem, the graph $\GG_w$ is weightless and $3$-valent except for at most one $4$-valent vertex. Since rational curves with three special points have no moduli, it follows that $\GG_w$ has a $4$-valent vertex, which we denote by $u\in\GG_w$. In this case we will show that the map $\alpha$ is locally combinatorially surjective at $w$. Consider $\widetilde \cX_u$ as above. By construction, $\widetilde \cX_u\to \widetilde{B}_w$ is a family of rational curves and since $\widetilde \cX \to \widetilde B$ is split, the four marked points in each fiber define sections of the family. Let $\xi\:\widetilde B_w \to \overline{M}_{0, 4}\simeq \PP^1$ be the induced map. Since $\chi$ does not contract $\widetilde B_w$, it follows that  $\xi$ is not constant, and hence surjective. We conclude that $\widetilde B_w$ contains points, the fibers over which have dual graphs corresponding to the three possible splittings of the $4$-valent vertex $u$ into a pair of $3$-valent vertices joined by an edge. In particular, $\alpha$ does not map $\Star(w)$ into $M_{[\Theta]}$, and we need to show that for each $\Theta'$ with an inclusion $M_\Theta \hookrightarrow \overline M_{\Theta'}$ there is $\vec e \in \Star(w)$ such that $\alpha(e) \cap M_{[\Theta']} \neq \emptyset$. To see this, notice that any such polyhedron $M_{\Theta'}$ corresponds to one of the three possible splittings of the $4$-valent vertex $u$ since, by balancing, the slope of the new edge is uniquely determined. \qed

\subsection{Proof of Corollary~\ref{cor:tropicalization_inf} }

Set $s':=\red(\tau) \in \widetilde B$, and let $z$ be the node of the fiber $\widetilde\cX_{s'}$ corresponding to the edge of varying length $\gamma \in E(\GG_l)$. Let $\widetilde B_w$ be the component of $\widetilde B$ containing $s'$. Recall that we set $D_{\cX^0}:=\widetilde \cX \cup \left ( \bigcup_i \cX_{\tau_i} \right) \cup \left( \bigcup_j \sigma_j \right)$, where $\sigma_j$ are the marked points of $\cX \to B$.
	
(1) As in Case 2 of Step 2 in the proof of Theorem~\ref{thm:paramtrop}, in an \'etale neighborhood $U$ of $s'$,  $\widetilde B \cup \tau$ is given by $\pi a=0$, and the family $\cX^0\to B^0$ over $U$ is given \'etale locally near $z$ by $xy=g_z$ for some $g_z\in\mathcal O_U(U)$. The length of $\gamma$ is an integral affine function on $l$, and by \eqref{eq:interpolation_leg}, its slope is given by the order of vanishing $k_\tau$ of $g_z$ at $\tau$. Since the slope of $\ell(\gamma, \cdot)$ along $l$ is not constant, $k_\tau>0$, i.e., $g_z$ vanishes at $\tau$. Thus, $\cX_\tau$ has a node as asserted. Vice versa, any node of $\cX_\tau$ specializes to some node $z'$ of $\cX_{s'}$ corresponding to an edge $\gamma' \in E(\GG_l)$. Then \'etale locally at $z'$, the family $\cX^0\to B^0$ is given by $xy=g_{z'}$, and $g_{z'}$ vanishes at $\tau$. By \eqref{eq:interpolation_leg} we conclude that the slope of $\ell(\gamma',\cdot)$ is not constant along $l$. Thus, $\gamma = \gamma'$ by the assumption of the corollary, and hence $z'=z$.
	
(2) Let $\cX'_\tau\subseteq \cX_\tau$ be an irreducible component. First, we show that for any $m\in M$, the pullback $f^*(x^m)$ is regular and invertible at the generic point of $\cX'_\tau$. Hence the rational map $f$ is defined at the generic point of $\cX'_\tau$, and maps it to the dense orbit $T$. Pick a vertex $u$ of $\GG_l$ such that $\widetilde \cX_{s,u}$ belongs to the closure of $\cX'_\tau$. By \eqref{eq:parameterized curves along leg}, the slope of $h(u, \cdot)(m)$ along $l$ is given by the order of pole of $f^*(x^m)$ along $\cX'_\tau$. However, $h(u, \cdot)(m)$ is constant along $l$ by the assumptions of the corollary, and hence $f^*(x^m)$ has neither zero nor pole at the generic point of $\cX'_\tau$. Second, notice that since $\cX^0$ is normal, $f^*(x^m)$ is regular and invertible away from $D_{\cX^0}$, and it is regular and invertible in codimension one on $\cX_\tau$, it follows that $f$ is defined on $\cX_\tau\setminus \left( \bigcup_j \sigma_j \right)$. Pick any $\sigma_j$, and let us show that $f$ is defined at $\sigma_j(\tau)\in \cX_\tau$, too. Let $S_j\subseteq S$ be the affine toric variety consisting of the dense torus orbit and the orbit of codimension at most one containing the image of $\sigma_j(b)$ for a general $b\in B$. Then the pullback of any regular monomial function $x^m\in \mathcal O_{S_j}(S_j)$ is regular in codimension one in a neighborhood of $\sigma_j(\tau)$, and hence regular at $\sigma_j(\tau)$ by normality of $\cX^0$. Thus, $f$ is defined at $\sigma_j(\tau)$, and maps it to $S_j$. The last assertion is clear. \qed

\section{Degeneration via point constraints}\label{sec:degeneration}
In this section, $(S,\CL)=\left(\PP^2,\CO_{\PP^2}(d)\right)$, $M=N=\mathbb Z^2$, and $\nabla=\nabla_d$ is the reduced degree of tropical curves associated to the triangle $\Delta_d$, i.e., $\nabla$ consists of $3d$ vectors: $(1,1), (-1,0), (0,-1)$, each appearing $d$ times. Recall from Lemma~\ref{lem:constr} and Proposition~\ref{prop:severi variety dimension} that for an integer $1-d\le g\le \binom{d-1}{2}$, the Severi variety parameterizing curves of degree $d$ and geometric genus $g$ is a locally closed subset $V_{g,d}\subseteq |\CO_{\PP^2}(d)|$ of pure dimension $3d+g-1$. The goal of this section is to prove our first Main Theorem and its corollary, which generalizes Zariski's theorem to arbitrary characteristic.

\begin{thm}\label{thm:cont_lines}
Let $d\in\NN$ and $1-d\le g\le \binom{d-1}{2}$ be integers, and let $V\subseteq V_{g,d}$ be an irreducible component. Then $\overline V$ contains $V_{1 - d,d}$.
\end{thm}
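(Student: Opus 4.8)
The plan is to argue by induction on $g$, reducing at each step to the degeneration $V_{g-1,d}\subset V$. The base case $g=1-d$ is immediate: the locus of unions of $d$ distinct lines is irreducible, being the image of a dense open subset of a product of $d$ dual projective planes, so $\overline V_{1-d,d}$ is itself irreducible and equals the closure of $V_{1-d,d}$. For the inductive step it is enough to exhibit one irreducible component $W$ of $\overline V_{g-1,d}$ contained in $V$, since then $V\supseteq W\supseteq V_{1-d,d}$ by the inductive hypothesis. By the Lefschetz principle I may assume $K=\overline F$ for a complete discretely valued field $F$, so that the machinery of Sections~\ref{sec: tropical curves}--\ref{sec:tropicalizatoin of families of curves} is available.

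\emph{Setting up the family.} I would fix $3d+g-1$ points $\bp_1,\dots,\bp_{3d+g-1}$ in $\PP^2$, in general position, lying in the open torus, and with vertically stretched tropicalizations in the sense of Proposition~\ref{prop:floor decomposed curves}. Let $L\subset|\CO(d)|$ be the sublinear system of curves through $\bp_1,\dots,\bp_{3d+g-2}$; since $\dim V=3d+g-1$ (Proposition~\ref{prop:severi variety dimension}), $B:=V\cap L$ is a curve whose general member is a reduced genus $g$ plane curve in $V$, generically immersed and transverse to the coordinate triangle. Replacing $B$ by the normalization of one of its components and spreading out, I obtain a family of parametrized curves $f\:\CX\dashrightarrow\PP^2$ over $B$ of reduced degree $\nabla$ with $n:=3d+g-2=|\nabla|+g-2$ contracted legs through $\bp_1,\dots,\bp_n$. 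After a finite base change and a modification, $\CX\to B$ has a split stable model $\CX^0\to B^0$ and $\Lambda:=\trop(B)$ is loopless, so Theorem~\ref{thm:paramtrop} produces the tropicalization $h\:\Gamma_\Lambda\to N_\RR$ together with a piecewise integral affine map $\alpha\:\Lambda\to M^{\trop}_{g,n,\nabla}$ that is harmonic or locally combinatorially surjective at every vertex whose fibre is weightless and $3$-valent except possibly at one $4$-valent vertex. The last, general point $\bp_{3d+g-1}$ singles out a $K$-point $\eta_0\in B$ whose image $\alpha(\trop\eta_0)$ lies, by Lemma~\ref{lem:dimbound}(1) (applied after adding the corresponding leg), in a \emph{nice} stratum and at which $\alpha$ is non-constant; this will be the starting point of the combinatorial game.

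\emph{The combinatorial game.} By Proposition~\ref{prop:floor decomposed curves} every tropical curve in the relevant fibre of $\ev$ is floor decomposed, and by Lemma~\ref{lem:dimbound} the image of $\alpha$ meets every nice stratum along a straight interval whose endpoints lie on simple walls, and meets every simple wall at a single interior point. Starting from $\alpha(\trop\eta_0)$ and using the floor--elevator structure of floor decomposed curves, together with the harmonicity / local combinatorial surjectivity of $\alpha$ to control how the image crosses each simple wall, I would travel from nice stratum to nice stratum --- this is precisely Lemma~\ref{lem:keylemma} --- until reaching a leg $l$ of $\Lambda$ on which $\alpha$ takes values in a nice stratum parametrizing tropical curves carrying a single contracted bounded edge whose length grows without bound along $l$, all other lengths and all vertex positions being constant along $l$. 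By Corollary~\ref{cor:tropicalization_inf} applied to $l$, the fibre $\CX_\tau$ over the marked point $\tau\in B$ corresponding to $l$ has exactly one node, $f$ is regular on $\CX_\tau$, sends its generic points to the dense torus, and has trivial tangency profile. Step~3 of the proof then shows $f|_{\CX_\tau}$ is birational onto its image and contracts no component, so $C:=f(\CX_\tau)$ is a reduced degree $d$ plane curve through $\bp_1,\dots,\bp_n$ with $\pg(C)\le g-1<g$ and $[C]\in\overline B\subseteq V$; in particular $[C]\in V\cap L$.

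\emph{Conclusion.} Writing $g':=\pg(C)\le g-1$, the bound of \cite{Tyo13} (Proposition~\ref{prop:severi variety dimension}) gives $\dim\overline V_{g',d}=3d+g'-1\le 3d+g-2=\codim L$. Since $L$ is general, this forces $g'=g-1$ and shows that $[C]$ lies on a component $W$ of $\overline V_{g-1,d}$ with $\dim W=3d+g-2$, whose intersection with $L$ is finite and consists of general points of $W$ (which can be arranged while keeping the $\bp_i$ vertically stretched). As $[C]\in W\cap L$ is a general point of $W$ and lies in the closed set $V$, irreducibility of $W$ yields $W\subseteq V$, and the induction closes: $V\supseteq W\supseteq V_{1-d,d}$. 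The main obstacle is the combinatorial game of Lemma~\ref{lem:keylemma}: one must prove that, starting from \emph{any} nice stratum hit by $\alpha$, the forced crossings of simple walls --- dictated by harmonicity or local combinatorial surjectivity and constrained by the floor/elevator combinatorics and the multiplicity-one phenomena of Remark~\ref{rem:multiplicity of elevator} --- necessarily terminate at a leg carrying an unbounded contracted edge, rather than looping forever among bounded configurations.
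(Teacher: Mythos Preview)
Your proposal follows the paper's strategy closely---tropicalize a one-parameter family, invoke Lemma~\ref{lem:keylemma} to find a leg with a growing contracted edge, and apply Corollary~\ref{cor:tropicalization_inf}---but there is a genuine gap: you never reduce to the case where the general curve parametrized by $V$ is \emph{irreducible}. Your induction is on $g$ alone, and you pass directly from ``$B:=V\cap L$ is a curve whose general member is a reduced genus $g$ plane curve'' to the construction of the family $\CX\to B$ of parametrized curves. But if the general $[C]\in V$ is reducible, the normalization of $C$ is disconnected, and the fibers of your family are disconnected curves. The entire tropical apparatus you then invoke---the moduli space $M^{\trop}_{g,n,\nabla}$, the notion of a floor decomposed curve, Lemma~\ref{lem:keylemma}---is set up for \emph{connected} tropical curves of genus $g$ (see the definition in \S\ref{sec: tropical curves}), so none of it applies.

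The paper handles this in its Step~1 by inducting on $(d,g)$ lexicographically rather than on $g$: if the general curve splits as $C=\bigcup C_i$ with $\deg C_i=d_i<d$ and $\pg(C_i)=g_i$, then $V$ is dominated by a product of components $V_i\subseteq\overline V^{\irr}_{g_i,d_i}$, and since each $(d_i,g_i)<(d,g)$ the inductive hypothesis already gives $V_{1-d_i,d_i}\subseteq V_i$, whence $V_{1-d,d}\subseteq V$. Only after this reduction may one assume $V\subseteq\overline V^{\irr}_{g,d}$, so that the normalized family has connected fibers and the tropical argument goes through. Your induction on $g$ alone cannot make this reduction, because the components $C_i$ have smaller degree, not smaller genus in any useful sense. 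A secondary point: you cite ``Step~3 of the proof'' for birationality of $f|_{\CX_\tau}$, but in a self-contained argument you should spell out why trivial tangency profile forces $f|_{\CX_\tau}$ to be birational onto a reduced curve (the pullback of the boundary is a reduced divisor of degree $3d$, so no component is contracted or multiply covered).
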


\begin{rem}
If the geometric genus of a degree-$d$ curve $C$ is $1-d$, then $C$ is necessarily a union of $d$ lines. Therefore, $V_{1 - d,d}$ is dominated by $\left((\PP^2)^*\right)^d$. In particular, $V_{1 - d,d}$ is irreducible and its general element corresponds to a nodal curve.
%The geometric genus $p_g(C)$ of a reduced curve $C$ is $\sum_i p_g(C_i) + 1 - c$, where $c$ is the number of irreducible components $C_i$ of $C$. On the other hand, if $C$ has degree $d$, it has at most $d$ irreducible components, and if $c = d$, then each irreducible component has degree $1$. Thus, any curve of degree $d$ and geometric genus $1 - d$ is a union of $d$ lines, and it follows that $\overline V_{1 - d,d}$ is dominated by $\left((\PP^2)^*\right)^d$. In particular, $V_{1 - d,d}$ is irreducible and its general element corresponds to a nodal curve.
\end{rem}

\begin{cor}[Zariski's Theorem] \label{cor:zariski}
Let $V \subset V_{g, d}$ be an irreducible subvariety. Then,
\begin{enumerate}
\item $\dim(V)\le 3d+g-1$, and
\item if $\dim(V)=3d+g-1$, then for a general $[C]\in V$, the curve $C$ is nodal.
\end{enumerate}
\end{cor}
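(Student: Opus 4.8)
The plan: deduce (1) directly from the dimension theory of Severi varieties, and deduce (2) by a local analysis of $\overline V_{g,d}$ at a general point of $V_{1-d,d}$, which Theorem~\ref{thm:cont_lines} places inside $V$. Part (1) is immediate: $V\subseteq V_{g,d}\subseteq\overline V_{g,d}$, and by Proposition~\ref{prop:severi variety dimension} together with Remark~\ref{rem:dimvgd} the variety $\overline V_{g,d}$ is equidimensional of dimension $3d+g-1$; hence $\dim V\le 3d+g-1$ (this part does not use Theorem~\ref{thm:cont_lines}). For (2), suppose $\dim V=3d+g-1$. Then $\overline V$ is an irreducible component of $\overline V_{g,d}$, so by Theorem~\ref{thm:cont_lines} it contains $V_{1-d,d}$, and in particular a general point $[C_0]\in V_{1-d,d}$, for which $C_0=L_1\cup\dots\cup L_d$ is a union of $d$ lines in general position with $\mathrm{Sing}(C_0)=\{p_1,\dots,p_N\}$, $N=\binom d2$, consisting of $N$ ordinary nodes.

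I would then work in disjoint (formal or \'etale) neighbourhoods $U_i\ni p_i$ and a small neighbourhood $W$ of $[C_0]$ inside the reduced locus of $|\CO_{\PP^2}(d)|$, shrunk so that every $[C]\in W$ has $\mathrm{Sing}(C)\subset\bigsqcup_iU_i$ and restricts over each $U_i$ to a deformation of the node of $C_0$ at $p_i$. The only local input needed is that a deformation of a node is either smooth or again a node — the node being the minimal plane curve singularity, this holds in every characteristic. Writing $\delta:=\binom{d-1}2-g$, for each $\delta$-element subset $\mu\subseteq\{p_1,\dots,p_N\}$ let $V_\mu\subseteq W$ be the locus of curves that are singular at some point of $U_i$ for every $p_i\in\mu$; since ``$C$ is singular somewhere in $U_i$'' cuts out a hypersurface (a local discriminant), $V_\mu$ has codimension at most $\delta$, i.e. $\dim V_\mu\le 3d+g-1$.

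The argument is then finished by two observations. First, every $[C]\in\overline V_{g,d}\cap W$ satisfies $\sum_q\delta_q(C)\ge\delta$: this uses $\pa(C)-\pg(C)=\sum_q\delta_q(C)$, upper semicontinuity of the $\delta$-invariant in families of reduced curves, and $\pa=\binom{d-1}2$; by the local description it forces $C$ to be singular inside at least $\delta$ of the $U_i$, whence $\overline V_{g,d}\cap W\subseteq\bigcup_{|\mu|=\delta}V_\mu$. Second, each $V_\mu$ has dimension $\le\dim\overline V$. Combining these, any irreducible component $Z$ of $\overline V\cap W$ — necessarily of dimension $\dim\overline V$ — lies in a single $V_{\mu_0}$ and is an irreducible component of it, so its closure is all of $\overline V=V$ and a general $[C]\in V$ is a general member of $Z$. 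Such a $C$ has a node in each $U_i$ with $p_i\in\mu_0$, hence at least $\delta$ nodes; and $\pg(C)=g$, since otherwise $Z$ would lie in some $\overline V_{g',d}$ with $g'<g$, which has strictly smaller dimension. Therefore $\sum_q\delta_q(C)=\delta$ exactly, which leaves no room for any further singular point, and $C$ is nodal.

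The hard part is the local analysis at $[C_0]$: one must justify carefully that a general nearby curve in $\overline V_{g,d}$ retains only the nodes it is forced to retain — equivalently, that the deformation of the node scheme of $C_0$ is unobstructed relative to $|\CO_{\PP^2}(d)|$, which is where positivity of $\CO_{\PP^2}(d)$ relative to $N=\binom d2$ enters — and one must control the $\delta$-invariant and geometric genus in families of reduced curves in positive characteristic, where equinormalizability can fail (cf. the example after Lemma~\ref{lma:normalization}); however, only upper semicontinuity of the $\delta$-invariant, which still holds, is actually used. Note that the finer structure of the $V_\mu$ — their smoothness, the transversality of their intersections, and the criterion for a branch to belong to $\overline V^{\irr}_{g,d}$, i.e. Theorem~\ref{thm:branchstr} — is not needed here.
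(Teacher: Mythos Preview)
Your argument for (1) matches the paper's. For (2), however, you take a considerably more elaborate route than necessary, and the short argument is in fact already implicit in your own setup.

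The paper's proof of (2) is three lines: since $\dim V=3d+g-1$, Proposition~\ref{prop:severi variety dimension} makes $\overline V$ an irreducible component of $\overline V_{g,d}$, so Theorem~\ref{thm:cont_lines} gives $V_{1-d,d}\subseteq\overline V$. In particular $\overline V$ contains a nodal curve --- namely $[C_0]$ itself, a union of $d$ lines in general position. The locus of nodal curves is open in $|\CO_{\PP^2}(d)|$, so it is non-empty open in the irreducible variety $\overline V$, hence dense; intersecting with the dense subset $V$ finishes.

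Your local analysis re-derives the openness of the nodal locus near $[C_0]$ by hand: your hypotheses on $W$ --- that every $[C]\in W$ has its singular locus contained in $\bigsqcup_iU_i$, and that each $C|_{U_i}$ is a deformation of a node, hence either smooth or again a single node --- already say that \emph{every} $[C]\in W$ is nodal. So $V\cap W$ consists entirely of nodal curves, and you are done at that point. The decomposition into the $V_\mu$'s, the $\delta$-invariant count, and the claim that $Z$ is an irreducible component of some $V_{\mu_0}$ are all superfluous for the corollary (they are steps toward Theorem~\ref{thm:branchstr}, as you yourself note at the end).

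Two slips in that superfluous part, for the record. First, ``$V_\mu$ has codimension at most $\delta$, i.e.\ $\dim V_\mu\le 3d+g-1$'' reverses the inequality: codimension $\le\delta$ gives $\dim V_\mu\ge 3d+g-1$. The upper bound you actually want would require the $\delta$ node conditions to be independent, which is precisely the content of Lemma~\ref{lem:h1=0}, and which you never invoke. Second, ``a general $[C]\in V$ is a general member of $Z$'' is not literally true, since $W$ is only a small neighbourhood of $[C_0]$; what you need (and what holds) is that $V\cap W$ is non-empty and consists of nodal curves, whence the nodal locus in $V$ is non-empty open, hence dense.
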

\begin{proof}
Assertion (1) follows from Proposition~\ref{prop:severi variety dimension}. If $\dim(V)=3d+g-1$, then $V$ is an irreducible component of $V_{g, d}$ by Proposition~\ref{prop:severi variety dimension}, and hence $V_{1 - d,d}\subseteq \overline V$  by Theorem~\ref{thm:cont_lines}. Thus, there exists $[C]\in \overline V$ such that $C$ is nodal. However, the locus of nodal curves is open in $\overline V$ by \cite[\href{https://stacks.math.columbia.edu/tag/0DSC}{Tag~0DSC}]{stacks-project}, and $V$ is irreducible. Hence for a general $[C]\in V$, the curve $C$ is nodal.
\end{proof}
\begin{rem}
Assertion (1) in arbitrary characteristic and for any toric surface was already proved in \cite[Theorem~1.2]{Tyo13} by the third author. Examples of toric surfaces for which assertion (2) fails can also be found {\em loc.cit.}
\end{rem}

The rest of the section is devoted to the proof of Theorem~\ref{thm:cont_lines}, which proceeds by induction on $(d,g)$ with the lexicographic order. The base of induction, $(d,g)=(1,0)$, is clear. To prove the induction step, let $(d,g)>(1,0)$ be a pair of integers such that $1-d\le g\le \binom{d-1}{2}$, and assume that for all $(d',g')<(d,g)$ the assertion is true. Let us prove that it holds true also for the pair $(d,g)$. If $g=1-d$, then there is nothing to prove since the variety $V_{1-d,d}$ is irreducible. Thus, we may assume that $g>1-d$.

\subsubsection*{Step 1: The reduction to the case of irreducible curves.}
We claim that it is enough to prove the assertion for $V\subseteq V^\irr_{g,d}$. Indeed, let $[C]\in V$ be a general point, and assume that $C$ is reducible. Denote the degrees of the components by $d_i$, and their geometric genera by $g_i$. Then $d_i<d$ for all $i$, $d=\sum d_i$, and $g-1=\sum (g_i-1)$. Consider the natural finite map
$\prod \overline{V}_{g_i, d_i}^{\irr} \to \overline V_{g,d}$,
whose image contains $[C]$. By Proposition \ref{prop:severi variety dimension}, the dimension of each irreducible component of
$\overline{V}_{g_i, d_i}^{\irr}$ is equal to $3d_i+g_i-1$, and since $\sum_{i=1}^m (3d_i+g_i-1)=3d+g-1=\dim(V)$, it follows that $\overline V$ is dominated by a product of irreducible components $\overline V_i\subseteq \overline V_{g_i, d_i}^{\irr}$. By the induction assumption, $V_{1-d_i,d_i}\subseteq \overline V_i$ for all $i$. But a general point of $\prod \overline{V}_{1-d_i, d_i}$ corresponds to a union of $d=\sum d_i$ general lines, and hence the map $\prod \overline{V}_{g_i, d_i}^{\irr} \to \overline V_{g,d}$ takes it to a general point of $\overline V_{1 - d,d}$. It follows now that $V_{1 - d,d}\subseteq \overline V$. From now on, we assume that $V\subseteq V^\irr_{g,d}$. \vspace{5px}

Our goal is to prove that the locus of curves of geometric genus $g-1$ in $\overline V$ has dimension $3d+g-2$, since then $\overline V$ necessarily contains a component of $V_{g-1,d}$ by Proposition~\ref{prop:severi variety dimension}, and hence also $V_{1-d,d}$ by the induction assumption. We proceed as follows: set $n:=3d+g-1$, and pick $n-1$ points $\{p_i\}_{i=1}^{n-1}\subset\PP^2$ in general position. For each $i$, let $H_i\subset|\mathcal O_{\PP^2}(d)|$ be the hyperplane parameterizing curves passing through the point $p_i$. Since $\dim(V)=n$ and the points $\{p_i\}_{i=1}^{n-1}\subset\PP^2$ are in general position, the intersection $Z:=V\cap \left(\bigcap_{i=1}^{n-1} H_i\right)$ has pure dimension one and a general $[C]\in Z$ corresponds to an integral curve $C$ of geometric genus $g$ that intersects the boundary divisor transversally by Proposition~\ref{prop:severi variety dimension}. It is sufficient to show that there exists $[C']\in \overline Z$ such that $C'$ is reduced and has geometric genus $g-1$. Indeed, such a $C'$ passes through a general collection of $n-1$ points $\{p_i\}_{i=1}^{n-1}\subset\PP^2$, and therefore the locus of curves of geometric genus $g-1$ in $\overline V$ has dimension $n-1=3d+g-2$. 

In order to apply the results of the previous section, we assume that the field $K$ is the algebraic closure of a complete discretely valued field, which we may do by the Lefschetz principle. Furthermore, we may assume that the points $\{p_i\}_{i=1}^{n-1}\subset\PP^2$ tropicalize to distinct vertically stretched points $\{q_i\}_{i=1}^{n-1}$ in $\RR^2$. This assumption allows us to work on the tropical side with floor decomposed curves, which are very convenient for controlling the degenerations of curves parameterized by $Z$. The rest of the proof proceeds as follows. In Step 2, we modify the base curve $Z$ and the family of curves over it, so that both $Z$ and the general fiber of the family become smooth, and hence tropicalization results of Section~\ref{subsec:topicalization of families} apply. And in Step 3, we investigate the tropicalization of the modified family and prove that it necessarily contains a tropical curve that corresponds to an algebraic fiber of genus $g-1$. 

\subsubsection*{Step 2: The construction of a family of parameterized curves.}
This step follows the ideas and the techniques of de Jong \cite{dJ96} based on the results of Deligne \cite{D85}. The goal is to construct a family $f \colon \CX \dashrightarrow \PP^2$ of parameterized curves over a smooth base curve $(B,\tau_\bullet)$, a prestable model $B^0$ of $(B,\tau_\bullet)$ whose reduction $\widetilde B$ has smooth irreducible components, and a finite morphism $B\to Z$ that satisfy the following properties: (i) $(\cX \to B, \sigma_\bullet)$ extends to a split family of stable marked curves over $B^0$, and (ii) for a general $[C]\in Z$, the fibers of $\cX \to B$ over the preimages of $[C]$ are the normalization $X$ of $C$ equipped with the natural map to $\PP^2$ and with $3d+n-1$ marked points such that the first $n-1$ of them are mapped to $p_1,\dotsc,p_{n-1}$, and the rest -- to the boundary divisor; cf. Definition~\ref{def: family of paramcur}.

Let us start with the normalization $B\to Z$ and with the pullback $\CX\to B$ of the tautological family to $B$ equipped with the natural map $f\:\cX\to \PP^2$. We are going to replace $B$ with finite coverings, dense open subsets, and compactifications several times, but to simplify the presentation, we will use the same notation $B,\cX,$ and $f$. First, we apply Lemma~\ref{lma:normalization}. After replacing $B$ with $B'$ and $\cX$ with $\cX'$ as in the lemma, we may assume that the family $\cX\to B$ is a generically equinormalizable family of projective curves. After shrinking $B$, we may further assume that $\cX\to B$ is equinormalizable, the pullback of the boundary divisor of $\PP^2$ on each fiber is reduced, and the preimages of $p_i$'s are smooth points of the fibers.

Second, we label the points of $\CX$ that are mapped to $\{p_i\}_{i=1}^{n-1}$ and to the boundary divisor, which results in a finite covering of the base curve $B$. After replacing $B$ with this covering, and $\cX$ with the normalization of the pullback, we equip the family $\cX\to B$ with marked points $\sigma_\bullet$ such that the first $n-1$ of them are mapped to $p_1,\dotsc,p_{n-1}$, and the rest -- to the boundary divisor. In particular, we obtain a $1$-morphism $B\to \CM_{g,3d+n-1}$ that induces the family $(\cX\to B,\sigma_\bullet)$. Notice that the natural morphism $f\:\cX\to\PP^2$ satisfies Property (ii).

The compactification $\overline{\CM}_{g,3d+n-1}$ admits a finite surjective morphism from a projective scheme $\overline{M}$, cf. \cite[\S2.24]{dJ96}. Thus, after replacing $B$ with an irreducible component of $B\times_{\overline{\CM}_{g,3d+n-1}}\overline{M}$, we may extend the family $(\cX\to B,\sigma_\bullet)$ to a family of stable curves with marked points over a smooth projective base, i.e., we may assume that $B$ is projective. Furthermore, the family over $B$ is the pullback of the universal family over $\overline{M}$ along $B\to \overline{M}$. Plainly, the induced rational map $f\:\cX\dashrightarrow\PP^2$ still satisfies Property (ii).

Let $B_\st$ be the stable model of $B$, and $B^0$ the closure of the graph $B\to \overline{M}$ in $B_\st\times\overline{M}$. It is a projective integral model of $B$ over which the family $(\cX\to B,\sigma_\bullet)$ naturally extends. Furthermore, the extension is obtained by pulling back the universal family from $\overline{M}$. After replacing $B^0$ with a prestable model dominating $B^0$, whose reduction has smooth irreducible components, we obtain a model of the base over which the family extends to a family of stable marked curves satisfying Property (ii). It remains to achieve splitness.  To do so, we proceed as in \cite[\S~5.17]{dJ96}.

We begin by adding sections such that the nodes of the geometric fibers are contained in the sections. Such sections exist by \cite[Lemma~5.3]{dJ96}. We temporarily add these sections as marked points, replace $B$ by an appropriate finite covering, and construct a prestable model of the base over which the family of curves with the extended collection of marked points admits a stable model. By construction, the irreducible components of the geometric fibers of the new family are smooth. By applying \cite[Lemmata 5.2 and 5.3]{dJ96} once again, we may assume that the new family admits a tuple of sections such that for any geometric fiber, any component contains a section, and any node is contained in a section. This implies that the new family is split; cf. \cite[\S~5.17]{dJ96}. Finally, we remove the temporarily marked points $\sigma_l$'s and stabilize. The obtained family is still split.

To summarize, we constructed a projective curve with marked points $(B,\tau_\bullet)$, its integral model $B^0$, a family of marked curves $(\cX\to B,\sigma_\bullet)$, and a rational map $f\:\cX\dashrightarrow\PP^2$ that satisfy Properties (i) and (ii).

\subsubsection*{Step 3: The conclusion}
\begin{lem}\label{lem:keylemma}
Let $h\:\Gamma_\Lambda\to \RR^2$ be the tropicalization of $f\:\cX\dashrightarrow\PP^2$ with respect to $\cX^0\to B^0$. Then there exists a leg $l$ of $\Lambda$ such that the lengths of all but one edge of $\GG_l$ are constant in the family $\Gamma_\Lambda$, and the map $h$ is constant on all vertices of $\GG_l$, where $\GG_l$ is the underlying graph of the tropical curves parameterized by $l$.
\end{lem}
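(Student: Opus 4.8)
\emph{Plan.} I would reformulate the statement as a navigation problem for the induced map $\alpha\:\Lambda\to M^{\trop}_{g,n',\nabla}$ of Theorem~\ref{thm:paramtrop}, where $n':=3d+g-2=|\nabla|+g-2$ is the number of contracted legs. The key observation is that every fibre $\Gamma_q$ of $\Gamma_\Lambda$ passes through the vertically stretched points $q_i:=\trop(p_i)$: the first $n'$ marked points $\sigma_i$ of the fibres map to the general points $p_i\in\PP^2$, hence to the torus, so their legs are contracted and $h(l_i)=\trop(p_i)=q_i$ by Remark~\ref{rem:pointwise tropicalization}. Thus $\alpha(\Lambda)\subseteq\ev^{-1}(q_1,\dotsc,q_{n'})$ and, by Proposition~\ref{prop:floor decomposed curves}, each $\Gamma_q$ is floor decomposed. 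We must produce a leg $l$ of $\Lambda$ along which all edge lengths of $\GG_l$ but one are frozen and all vertex images stay put as $q\to\infty$; by the remark after Corollary~\ref{cor:tropicalization_inf} the exceptional edge is then automatically $h$-contracted, so we are after a leg that $\alpha$ maps into a nice stratum whose parametrized curves carry a single $h$-contracted edge of varying length.

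\emph{Local picture.} Next I would describe how $\alpha$ behaves along $\Lambda$. By genericity of the $p_i$, at every vertex $w$ of $\Lambda$ the curve $\Gamma_w$ is weightless and $3$-valent away from at most one $4$-valent vertex, so $M_{[\Theta_w]}$ is nice or a simple wall, Theorem~\ref{thm:paramtrop} applies at $w$, and $\alpha$ is non-constant; moreover $\alpha(q)$ lies in a nice stratum for $q$ interior to an edge. Inside a fixed nice stratum $M_{[\Theta]}$, the subgraph $\alpha^{-1}(M_{[\Theta]})\subseteq\Lambda$ is mapped by $\alpha$ harmonically — this is Case~1 of Step~3 of the proof of Theorem~\ref{thm:paramtrop}, applicable at each $3$-valent vertex — into the interval $\ev_\Theta^{-1}(q_1,\dotsc,q_{n'})\subset M_\Theta$, whose endpoints lie outside $M_\Theta$ by Lemma~\ref{lem:dimbound}(3). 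Hence $\alpha$ can leave $M_{[\Theta]}$ only at a vertex $w$ with $\Gamma_w$ a simple wall; there $\ev^{-1}$ is a single point by Lemma~\ref{lem:dimbound}(2) and $\alpha$ is locally combinatorially surjective, so $\alpha$ enters every nice stratum adjacent to that wall.

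\emph{The combinatorial game.} Finally I would run the game with floor decomposed curves. Using the floor-elevator description of Section~\ref{subsec:floordecomp} and Remark~\ref{rem:multiplicity of elevator}, one identifies for each simple wall met en route its $4$-valent vertex and the three nice strata obtained by resolving it; for floor decomposed curves over vertically stretched points these resolutions amount to a short list of local moves — an elevator sliding across the marked point of a floor, two elevators exchanging order, an elevator detaching from or merging into a floor — which are exactly the moves in the figures accompanying the proof. Starting from a generic point of $\Lambda$ in a nice stratum $M_{[\Theta_0]}$, I would follow a route $M_{[\Theta_0]}\rightsquigarrow M_{[\Theta_1]}\rightsquigarrow\dotsb$ through nice strata, at each simple wall using local combinatorial surjectivity to step into the next one, steered by a monotone quantity — concretely, tracking the unique downward elevator adjacent to the top floor (of multiplicity one by Remark~\ref{rem:multiplicity of elevator}) and forcing the top floor to rise. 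As this quantity is strictly monotone along the route while only finitely many nice strata of $M^{\trop}_{g,n',\nabla}$ meet $\ev^{-1}(q_1,\dotsc,q_{n'})$, the route cannot cycle; as it cannot halt at a wall, it escapes to infinity along a leg $l$ of $\Lambda$. By the minimality built into the route, along $l$ exactly one edge of $\GG_l$ is stretched and every other edge length and vertex image is constant — the conclusion of the lemma, the stretched edge being $h$-contracted by the remark recalled above.

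\emph{The main obstacle.} The real content is the combinatorial game: one must (i) enumerate, for floor decomposed curves through vertically stretched points, the possible $4$-valent vertices of a simple wall together with the nice strata flanking it; (ii) construct a quantity, built from the top floor and its distinguished elevator, that decreases strictly under every wall crossing the route performs and is bounded, forcing termination; and (iii) check that the route terminates by running off along a leg of $\Lambda$ on which the stretching is carried by a single $h$-contracted edge, rather than by revisiting a stratum. Carrying out (i)--(iii), which is what the figures within the proof make precise, is the heart of the matter.
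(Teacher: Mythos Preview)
Your overall strategy matches the paper's: recast the problem as navigating $\alpha$ through nice strata and simple walls of $M^{\trop}_{g,n-1,\nabla}$, using harmonicity and local combinatorial surjectivity from Theorem~\ref{thm:paramtrop}, steered by a combinatorial invariant on floor decomposed curves. But several steps in your setup are wrong or unjustified.

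First, Proposition~\ref{prop:floor decomposed curves} requires $|\nabla|+g-1$ point constraints; you have only $n'=|\nabla|+g-2$, so you cannot conclude that every $\Gamma_q$ is floor decomposed. The paper repairs this by choosing an auxiliary point $p_n$ with $q_n$ vertically stretched together with $q_1,\dotsc,q_{n-1}$, picking $b\in B$ with $p_n\in f(\cX_b)$, and starting the navigation from the genuinely floor decomposed curve $(\Gamma,h):=\trop(\cX_b)$. The curves met along the route need not pass through $q_n$, but inherit enough floor--elevator structure from the initial type and its controlled deformations.

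Second, your monotone quantity is wrong. The top floor $F_d$ is pinned by its marked point $q_d$ and does not ``rise''. The paper's invariant is the pair $(k,r)$ in lexicographic order, where $F_k$ is the lower floor adjacent to the distinguished elevator $E$ and $r$ counts special points of $F_k$ between $E$ and the nearest downward elevator $E'$. Navigation slides $E$ horizontally along $F_k$ (decreasing $r$); when $r=1$ it either terminates --- if $E'$ has multiplicity one, producing the contracted edge --- or descends through $E'$, via two further simple walls $M_\Xi$ and $M_\Upsilon$, to a lower floor $F_{k'}$ with $k'<k$.

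Third, you cannot assume by genericity of the $p_i$ that \emph{every} vertex $w\in V(\Lambda)$ has $\Gamma_w$ weightless and $3$-valent with at most one $4$-valent vertex; vertices of $\Lambda$ are components of $\widetilde B$ and may carry more degenerate fibres. The paper never claims this. Instead it proves two targeted properties: (a) if $\alpha(v)$ lies in a simple wall then some $w$ with $\alpha(w)=\alpha(v)$ is locally combinatorially surjective, because the wall meets $\ev^{-1}(q_1,\dotsc,q_{n-1})$ in a single point and $\alpha$ is non-constant; and (b) if $\alpha(\Lambda)$ meets a nice stratum $M_{\Theta'}$ then $\alpha(\Lambda)\cap M_{\Theta'}$ equals the full interval $M_{\Theta'}\cap\ev_{\Theta'}^{-1}(q_1,\dotsc,q_{n-1})$, by harmonicity. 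The explicit floor--elevator analysis then guarantees that along the chosen route the boundary of each such interval lies on a simple wall, so (a) and (b) suffice and no global claim about all vertices is needed.

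Finally, as you concede, the combinatorial game --- the induction on $(k,r)$ with its two cases --- is the actual content, and you have not carried it out.
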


We postpone the proof of the lemma and first, deduce the theorem. Let $\tau\in B(K)$ be the marked point corresponding to the leg $l$ from Lemma~\ref{lem:keylemma}. By Corollary~\ref{cor:tropicalization_inf}, the geometric genus of $\cX_\tau$ is $g-1$. Furthermore,  the map $f$ is defined on $\cX_\tau$, maps its generic points to the dense orbit, and the pullback of the boundary divisor is a reduced divisor of degree $3d$. Thus, $f|_{\cX_\tau}$ is birational onto its image, and hence $[f(\cX_\tau)]\in Z$ is a reduced curve of genus $g-1$, which completes the proof of Theorem~\ref{thm:cont_lines}.\qed

\subsubsection*{Proof of Lemma~\ref{lem:keylemma}.}
To prove the lemma, we shall first analyze the image of the induced map $\alpha\:\Lambda\to M^\trop_{g,n-1,\nabla}$. The strata of the moduli space we will deal with admit no automorphisms. Thus, throughout the proof, one can think about $M^\trop_{g,n-1,\nabla}$ as a usual polyhedral complex rather than a generalized one. In particular, we will use the standard notion of {\em star} of a given stratum in a polyhedral complex.

Notice that $\alpha$ is not constant, since by the construction of $B$, for any $p\in \PP^2$, there exists $b\in B$ such that the curve $f(\cX_b)$ passes through $p$, and hence $h(\Gamma_{\trop(b)})$ contains $\trop(p)$, which can be any point in $\QQ^2\subset\RR^2$, cf. Remark~\ref{rem:pointwise tropicalization}. We will also need the following key properties of $\alpha$, which we prove next: (a) if $\alpha$ maps a vertex $v\in V(\Lambda)$ to a simple wall $M_\Theta$, see Definition~\ref{defn:simple wall}, then there exists a vertex $w\in V(\Lambda)$ such that $\alpha(w)=\alpha(v)$ and the map $\alpha$ is locally combinatorially surjective at $w$; and (b) if  $M_{\Theta'}$ is nice and $\alpha(\Lambda)\cap M_{\Theta'}\ne\emptyset$, then
$$\alpha(\Lambda)\cap M_{\Theta'}=M_{\Theta'}\cap\ev_{\Theta'}^{-1}(q_1,\dotsc,q_{n-1}),$$
where $\ev_{\Theta'}$ denotes the evaluation map defined in Section~\ref{sec:evaluation}. In particular, $\alpha(\Lambda)\cap M_{\Theta'}$ is an interval, whose boundary is disjoint from $M_{\Theta'}$, since so is $M_{\Theta'}\cap\ev_{\Theta'}^{-1}(q_1,\dotsc,q_{n-1})$ by Lemma~\ref{lem:dimbound} (3).

We start with property (a). Notice that $M_{\Theta}\cap\ev_\Theta^{-1}(q_1,\dotsc,q_{n-1})$ is a point by Lemma~\ref{lem:dimbound} (2). However, $\alpha$ is not constant, and hence there exists a vertex $w$ such that $\alpha(\Star(w))\nsubseteq M_\Theta$ and $\alpha(w)=\alpha(v)$. Thus, $\alpha$ is not harmonic at $w$, and hence it is locally combinatorially surjective at $w$ by Theorem~\ref{thm:paramtrop}.

Property (b) follows from harmonicity. Indeed, since $M_{\Theta'}\cap\ev_{\Theta'}^{-1}(q_1,\dotsc,q_{n-1})$ is an interval, $\emptyset\ne\alpha(\Lambda)\cap M_{\Theta'}\subseteq M_{\Theta'}\cap\ev_{\Theta'}^{-1}(q_1,\dotsc,q_{n-1})$, and $\alpha$ is affine on the edges and legs, it follows that $\alpha(\Lambda)\cap \overline{M}_{\Theta'}$ is a finite union of closed intervals. However, by Theorem~\ref{thm:paramtrop}, $\alpha$ is harmonic at the vertices $w\in V(\Lambda)$ that are mapped to $M_{\Theta'}$. Thus, $\alpha(\Lambda)\cap \overline{M}_{\Theta'}$ is a single closed interval, whose boundary is disjoint from $M_{\Theta'}$, which implies (b).

Pick a point $p_n\in\PP^2$ with tropicalization $q_n\in\RR^2$ such that the collection $\{p_i\}_{i=1}^{n}$ is in general position, and the configuration $\{q_i\}_{i=1}^{n}$ is vertically stretched, see Section~\ref{subsec:floordecomp}. Assume further that the points $\{q_i\}_{i=1}^{n}$ belong to a line defined by $y=-\mu x$ for some $\mu\gg 1$. Let $b\in B(K)$ be a point such that $f(\cX_b)$ passes through $p_n$. Then the tropicaliztion $(\Gamma,h)$ of $(\cX_b,f)$ is a floor decomposed curve by Proposition~\ref{prop:floor decomposed curves}. By Proposition~\ref{prop:genmikgen}, if we marked the point $p_n$ on $\cX_b$, the tropicalization $\trop(\cX_b; p_1,\dotsc,p_n)$ would be weightless and $3$-valent, and the map to $\RR^2$ would be an immersion away from the contracted legs. The tropicalization $\trop(\cX_b; p_1,\dotsc,p_{n-1})$ is obtained from $\trop(\cX_b; p_1,\dotsc,p_n)$ by removing the leg contracted to $q_n$ and stabilizing. Thus, $\alpha(\trop(b))$ belongs to a nice stratum $M_{\Theta'}$. Furthermore, the elevator $E$ adjacent to the top floor of $\Gamma$ has multiplicity one, cf. Remark~\ref{rem:multiplicity of elevator}.

Denote the floors from the bottom to the top by $F_1,\dotsc, F_d$, and let $F_k$, $k<d$, be the non-top floor adjacent to $E$. We may assume that $q_n$ belongs to the image of the elevator $E$, and $q_i$ to the image of the floor $F_i$ for all $1\le i\le d$. Indeed, pick a permutation $\sigma\:\{1,\dotsc, n\}\to\{1,\dotsc,n\}$ such that $q_{\sigma(n)}\in h(E)$ and $q_{\sigma(i)}\in h(F_i)$ for all $1\le i\le d$. Set $p'_i:=p_{\sigma(i)}$, and consider the curve $B'$ associated to $\{p'_i\}_{i=1}^{n-1}$ and the corresponding family of parameterized curves $f'\:\cX'\dashrightarrow\PP^2$. By construction, there exists $b'\in B'(K)$ such that $(\cX'_{b'},f'|_{\cX'_{b'}})=(\cX_b,f|_{\cX_b})$. It remains to replace $B$ with $B'$, $(\cX,f)$ with $(\cX',f')$, and $p_i$'s with $p'_i$'s.

Denote the $x$-coordinate of $E\cap F_k$ by $x$, and let $E'$ be the downward elevator adjacent to $F_k$ whose $x$-coordinate $x'$ is the closest to $x$. Notice that by Remark~\ref{rem:multiplicity of elevator}, the $x$-coordinate of the marked point $q_d$ does not belong to the interval joining $x$ and $x'$. Without loss of generality we may assume that $x'>x$. We will call a point of $F_k$ {\em special} if it is either a marked point or a vertex. Denote the $x$-coordinates of the special points of $F_k$ that belong to the interval $[x,x']$ by $x=x_0<x_1<\dots<x_r=x'$. If $x_i$ belongs to an elevator, then the elevator will be denoted by $E_i$. Plainly, $E=E_0$ and $E_r=E'$.

Set $q_n(t):=q_n+t(x_1-x_0,0)$, $0\le t\le 1$, and consider the continuous family of tropical curves $(\Gamma_t, h_t)\in \overline{M}_{\Theta'}\cap\ev_{\Theta'}^{-1}(q_1,\dotsc,q_{n-1})$, where $(\Gamma_0,h_0)=(\Gamma,h)$, and such that $q_n(t)\in h_t(E)$ for all $t$. Since $q_1,\dotsc, q_{n-1}, q_n(t)$ are vertically $\lambda$-stretched for a very large value of $\lambda$, the floors of the curve remain disjoint in the deformation $(\Gamma_t, h_t)$. Notice also that since the points $q_1,\dotsc, q_{n-1}$ are fixed, the $x$-coordinate of each elevator except $E$ remains the same in the deformation. Furthermore, for any $i\ne k,d$, the $x$-coordinates of all elevators adjacent to $F_i$ are fixed in the deformation, as well as the position of the marked point $q_i$ on $F_i$. Thus, the restriction of the parameterization $h$ to $F_i$ is also fixed. Finally, since the $x$-coordinate of $q_d$ does not belong to $[x,x']$, it follows that $(\Gamma_t, h_t)\in \overline{M}_{\Theta'}\cap\ev_{\Theta'}^{-1}(q_1,\dotsc,q_{n-1})$ for all $0\le t\le 1$. Furthermore, the curve $(\Gamma_1, h_1)$ belongs to a simple wall $M_\Theta$, in which the elevator $E=E_0$ gets adjacent to a $4$-valent vertex on the floor $F_k$ together with either the elevator $E_1$ or the leg contracted to the marked point $q_k$.
Let us describe $\Star(M_\Theta)$ explicitly. It consists of three nice strata $M_{\Theta'}, M_{\Theta''}, M_{\Theta'''}$, cf. Case~2 of Step~3 in the proof of Theorem~\ref{thm:paramtrop}. The stratum $M_{\Theta''}$ parameterizes curves in which the elevator $E=E_0$ has $x$-coordinate larger than that of $E_1$ (resp. $q_k$), and $M_{\Theta'''}$ parameterizes curves in which $E_0$ and $E_1$ (resp. $q_k$) get adjacent to a common vertex $u$ in the perturbation of the $4$-valent vertex of $(\Gamma_1,h_1)$; see Figure~\ref{fig:wall}. By Property (b), there exists a vertex $v$ of $\Lambda$, such that $\alpha(v)\in M^\trop_{g,n-1,\nabla}$ is the isomorphism class of $(\Gamma_1, h_1)$, and hence by Property (a), there exists a vertex $w$ such that $\alpha(w)=\alpha(v)$ and $\alpha$ is locally combinatorially surjective at $w$.

\tikzset{every picture/.style={line width=0.75pt}} %set default line width to 0.75pt
\begin{figure}[ht]	
\begin{tikzpicture}[x=0.5pt,y=0.5pt,yscale=-.8,xscale=.8]
\import{./}{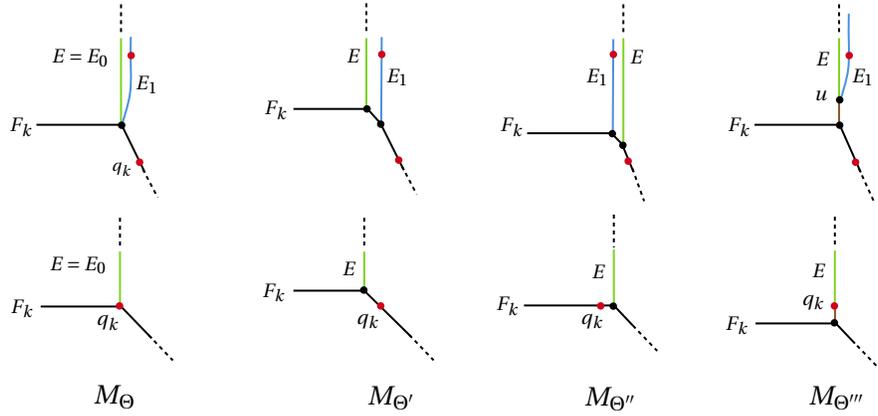}
\end{tikzpicture}		
\caption{On the left, the elevator $E$ reaches a special point. On the right -- the local pictures of typical elements in the three nice cones in $\Star(M_\Theta)$. The top row illustrates the case when the special point is a vertex, and the bottom -- when it is a marked point.}
\label{fig:wall}
\end{figure}

We proceed by induction on $(k,r)$ with the lexicographic order, and start with the extended base of induction: {\em $r=1$ and the multiplicity of $E'$ is one}. Since $\nabla$ is reduced, this is the case in the actual base of induction $(k,r)=(1,1)$. Let $e\in \Star(w)$ be an edge such that $\alpha(e)\subset M_{\Theta'''}$. Since the elevators $E$ and $E'$ both have multiplicity one, it follows from the balancing condition that the third edge adjacent to $u$ gets contracted by the parameterization map $h$; see Figure~\ref{fig:baseind}.

\tikzset{every picture/.style={line width=0.75pt}}
\begin{figure}[h]	
\begin{tikzpicture}[x=0.5pt,y=0.5pt,yscale=-.8,xscale=.8]
\import{./}{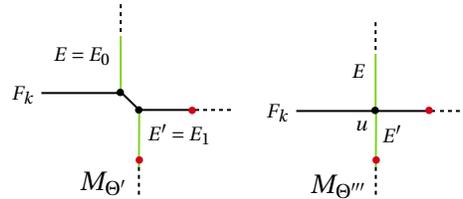}
\end{tikzpicture}		
\caption{The case in which the $4$-valent vertex is adjacent to two elevators of multiplicity one. The right picture illustrates the fact that in this case, the curves parameterized by $M_{\Theta'''}\cap\ev_{\Theta'''}^{-1}(q_1,\dotsc,q_{n-1})$ have constant image in $\RR^2$ and contain an edge of varying length contracted to $u$.}
\label{fig:baseind}
\end{figure}

Thus, the locus $M_{\Theta'''}\cap\ev_{\Theta'''}^{-1}(q_1,\dotsc,q_{n-1})$ is an {\em unbounded} interval. Furthermore, all curves in this locus factor through $(\Gamma_1,h_1)$. By Property (b), $M_{\Theta'''}\cap\ev_{\Theta'''}^{-1}(q_1,\dotsc,q_{n-1})=\alpha(\Lambda)\cap M_{\Theta'''}$. Hence there exists a leg $l\in L(\Lambda)$ such that $\alpha(l)\subset M_{\Theta'''}\cap\ev_{\Theta'''}^{-1}(q_1,\dotsc,q_{n-1})$ is not bounded. The leg $l$ satisfies the assertion of the lemma. Next, let us prove the induction step. We distinguish between two cases.

Case 1: {\em $r>1$.} Let $e\in \Star(w)$ be an edge such that $\alpha(e)\subset M_{\Theta''}$, and $(\Gamma_{1+\epsilon},h_{1+\epsilon})$ the tropical curve parameterized by any point in $e^\circ$. Then the corresponding invariant is $(k,r-1)$, and hence, by the induction assumption, there exists a leg $l\in L(\Lambda)$ satisfying the assertion of the lemma. See Figure~\ref{fig:wallcros1} for an illustration.

\tikzset{every picture/.style={line width=0.75pt}}
\begin{figure}[ht]	
\begin{tikzpicture}[x=0.5pt,y=0.5pt,yscale=-.7,xscale=.7]
\import{./}{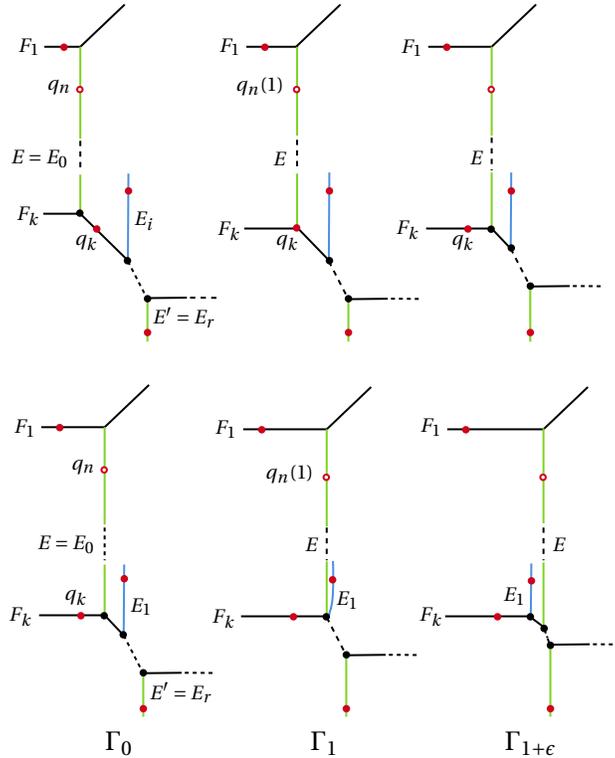}
\end{tikzpicture}		
\caption{Moving the elevator $E$ along the floor $F_k$. The top row illustrates the wall-crossing corresponding to a marked point, and the bottom -- to a downward elevator.}
\label{fig:wallcros1}
\end{figure}

Case 2: {\em $r=1$ and $k>1$.} We already checked the case when the multiplicity of $E'$ is one in the extended base of induction. Thus, assume that the multiplicity $\m(E')$ of $E'$ is greater than one. Let $q_j$ be the marked point contained in $h_1(E')$, and $E''$ the second elevator for which $q_j\in h_1(E'')$. Let $F_{k'}$ be the floor adjacent to $E''$. Then $k'<k$. Let $e\in \Star(w)$ be an edge such that $\alpha(e)\subset M_{\Theta'''}$, where $M_{\Theta'''}$ denotes the nice cone in the star of $M_\Theta$ in which the elevators $E$ and $E'$ get attached to a $3$-valent vertex $u$ obtained from the perturbation of the $4$-valent vertex of $(\Gamma_1,h_1)$. We denote the two elevators corresponding to $E'$ in this cone by $E'_1$ and $E'_2$, see Figure~\ref{fig:wallcros2} for an illustration.

\tikzset{every picture/.style={line width=0.75pt}}
\begin{figure}[ht]	
\begin{tikzpicture}[x=0.5pt,y=0.5pt,yscale=-0.7,xscale=0.7]
\import{./}{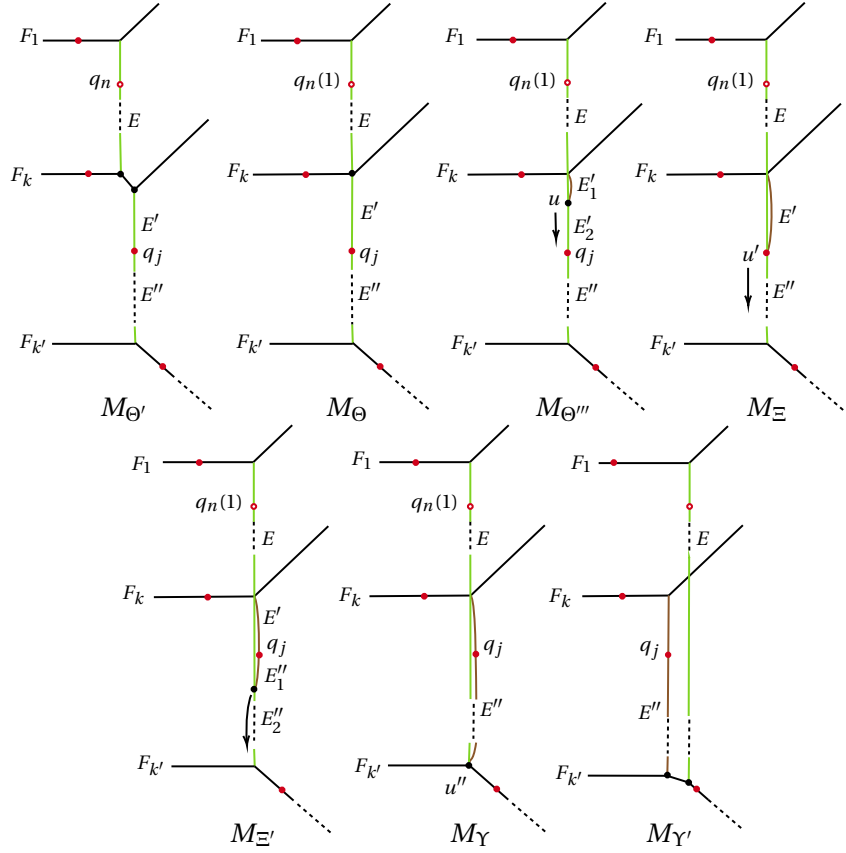}
\end{tikzpicture}		
\caption{Going down with an elevator $E'$ of multiplicity greater than one, and the corresponding wall-crossings.}
\label{fig:wallcros2}
\end{figure}

It follows from the balancing condition that the slope of $E'_1$ in $\Star(u)$ is $(0,\m(E')-1)$. Thus, the locus $M_{\Theta'''}\cap\ev_{\Theta'''}^{-1}(q_1,\dotsc,q_{n-1})$ is a bounded interval, whose second boundary point belongs to another simple wall $M_\Xi$, which parameterizes curves with a $4$-valent vertex $u'$ adjacent to the elevators $E,E',E''$ and to the leg $l_j$ contracted to $q_j$.

As before, there exists a vertex $w'$ of $\Lambda$ such that $\alpha(w')\in M_\Xi$ and $\alpha$ is locally combinatorially surjective at $w'$ by Property (b). Let $M_{\Xi'}\in\Star(M_\Xi)$ be the nice stratum in which the $4$-valent vertex $u'$ is perturbed to a $3$-valent vertex adjacent to $E$ and the downward elevator $E''_2$, and a $3$-valent vertex adjacent to $E'$ and $l_j$. By Property (a), the image of $\alpha$ intersects $M_{\Xi'}$ non-trivially. Furthermore, $M_{\Xi'}\cap \ev_{\Xi'}^{-1}(q_1,\dotsc,q_{n-1})$ is a bounded interval, whose second boundary point belongs to the simple wall $M_\Upsilon$ parameterizing curves with a $4$-valent vertex $u''$ adjacent to the floor $F_{k'}$ and to the elevators $E$ and $E''$. As usual, $M_{\Xi'}\cap \ev_{\Xi'}^{-1}(q_1,\dotsc,q_{n-1})$ belongs to the image of $\alpha$, and hence all three nice strata of $\Star(M_\Upsilon)$ intersect the image of $\alpha$ non-trivially. In particular, there exists an edge $e'$ of $\Lambda$ parameterizing curves with invariant $(k',*)<(k,1)$, and hence, by the induction assumption, there exists a leg $l\in L(\Lambda)$ satisfying the assertion of the lemma, which completes the proof.\qed

\section{The local geometry of Severi varieties}\label{sec:local geometry of vgd}
The goal of this section is to prove the following theorem describing the local geometry of $\overline V_{g,d}$ and $\overline V_{g,d}^{\irr}$ along $V_{1-d,d}$:

\begin{thm}\label{thm:branchstr}
The germ of $\overline V_{g,d}$ at a general $[C_0]\in V_{1-d,d}$ is a union of smooth branches indexed by subsets $\mu\subseteq C_0^\sing$ of cardinality $\delta:=\binom{d-1}{2}-g$. Moreover,
\begin{enumerate}
\item The scheme-theoretic intersection $\Br(\mu)\cap\Br(\mu')$ is smooth of codimension $|\mu'\setminus\mu|$ in $\Br(\mu)$;
\item The branch $\Br(\mu)$ belongs to $\overline V_{g,d}^{\irr}$ if and only if $C_0\setminus \mu$ is connected.
\end{enumerate}
\end{thm}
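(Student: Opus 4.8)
The plan is to describe $\overline V_{g,d}$ near $[C_0]$ through a simple normal crossings configuration of hypersurfaces, packaged by the decorated Severi variety $\overline U_{d,\delta}$ (which I would first introduce and show to be smooth, equidimensional of dimension $3d+g-1$, with forgetful map $\pi\colon\overline U_{d,\delta}\to|\CO_{\PP^2}(d)|$ proper, generically finite, and of image $\overline V_{g,d}$). A general $[C_0]\in V_{1-d,d}$ is a union of $d$ lines in general position, so $C_0$ is nodal with $\binom d2$ nodes and dual graph $K_d$. For each node $p$ of $C_0$ let $D_p\subseteq|\CO(d)|$ be the germ at $[C_0]$ of curves having a singular point near $p$. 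The local facts I would establish are: (i) a one-parameter deformation of an ordinary node carries a unique singular point in a fixed \'etale neighbourhood — the Hessian of a local equation $xy$ is nondegenerate, in any characteristic — hence $D_p$ is a smooth hypersurface with tangent space $\{g\bmod f_0 : g(p)=0\}=H^0(\CI_{\{p\}}(d))/\langle f_0\rangle$, where $f_0$ is an equation of $C_0$; and (ii) the $\binom d2$ nodes of $C_0$ impose independent conditions on $|\CO(d)|$ — explicitly, for $p=L_i\cap L_j$ the degree $d-2$ curve $\prod_{k\neq i,j}L_k$ times a general conic passes through all nodes except $p$. Since $f_0$ vanishes at every node, (ii) makes the functionals $g\mapsto g(p)$ linearly independent on $H^0(\CO(d))/\langle f_0\rangle$, so the $D_p$ form a simple normal crossings configuration at $[C_0]$.

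It follows that for any $\mu\subseteq C_0^\sing$ the intersection $\bigcap_{p\in\mu}D_p$ is smooth of codimension $|\mu|$ with tangent space $H^0(\CI_\mu(d))/\langle f_0\rangle$. If $|\mu|=\delta$, then a general member of $\bigcap_{p\in\mu}D_p$ lies off $D_{p'}$ for every other node $p'$ (the intersection is not contained in $D_{p'}$, by normal crossings), hence is smooth near $p'$; so it is the partial smoothing of $C_0$ preserving exactly the nodes of $\mu$: nodal with $\delta$ nodes, of geometric genus $\binom{d-1}{2}-\delta=g$. Thus $\bigcap_{p\in\mu}D_p\subseteq\overline V_{g,d}$, and being smooth of codimension $\delta$ it is of top dimension by Proposition~\ref{prop:severi variety dimension}; call this branch $\Br(\mu)$. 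Conversely, $\pi^{-1}([C_0])$ consists exactly of the pairs $(C_0,\mu)$ — a length-$\delta$ flat limit of $\delta$-tuples of distinct nodes cannot have two of them collide, by (i) — and $d\pi$ is injective at each $(C_0,\mu)$ because its image lies in $H^0(\CI_\mu(d))/\langle f_0\rangle$, of dimension $3d+g-1=\dim T_{(C_0,\mu)}\overline U_{d,\delta}$; so $\pi$ identifies the germ of $\overline U_{d,\delta}$ at $(C_0,\mu)$ with $\Br(\mu)$, and the germ of $\overline V_{g,d}$ at $[C_0]$ is $\bigcup_{|\mu|=\delta}\Br(\mu)$, with distinct $\mu$ giving distinct branches since their tangent spaces differ. (Alternatively, one reaches the same conclusion from Corollary~\ref{cor:zariski}: a general point of any component of $\overline V_{g,d}$ through $[C_0]$ is nodal with $\delta$ nodes, which limit without collision to some $\mu$.)

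For assertion (1): by normal crossings, $\Br(\mu)\cap\Br(\mu')=\bigcap_{p\in\mu\cup\mu'}D_p$ is smooth — in particular reduced, so it equals the scheme-theoretic intersection — of codimension $|\mu\cup\mu'|=\delta+|\mu'\setminus\mu|$ in $|\CO(d)|$, i.e. of codimension $|\mu'\setminus\mu|$ in $\Br(\mu)$. For assertion (2): take a general one-parameter family inside $\Br(\mu)$ through $[C_0]$, with general member $C$ the partial smoothing of $C_0$ above; the general such smoothing has smooth total space, and normalizing it separates precisely the persistent nodes $\mu$, giving a proper smooth family over the base with smooth general fibre $\widetilde C$ (the normalization of $C$) and special fibre the partial normalization $C_0\setminus\mu$, whose dual graph is $K_d$ with the edges $\mu$ deleted. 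Since the number of connected components of the fibres of a proper smooth family is constant, $\widetilde C$ — hence $C$ — is irreducible iff $C_0\setminus\mu$ is connected. Finally $\Br(\mu)$ is smooth of dimension $3d+g-1$, which by Proposition~\ref{prop:severi variety dimension} equals the dimension of every component of $\overline V_{g,d}$, so $\Br(\mu)$ is Zariski-dense in a unique component $V$; then $\Br(\mu)\subseteq\overline V_{g,d}^{\irr}$ iff $V$ parametrizes generically irreducible curves iff a general member of $\Br(\mu)$ is irreducible iff $C_0\setminus\mu$ is connected.

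The main obstacle is the local deformation theory near $C_0$ in positive characteristic: that a general member of $\bigcap_{p\in\mu}D_p$ has no singularities beyond the $\delta$ prescribed nodes — so that it genuinely realizes the asserted partial smoothing and the corresponding family is equinormalizable with smooth general fibre, the pathology of the example in Section~\ref{sec: algebraic curves} being exactly what must be excluded. Once this and the independence of the node-conditions are in place, the tangent-space dimension counts and the normal crossings property are formal.
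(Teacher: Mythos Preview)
Your argument is correct and tracks the paper's proof closely: both route through the decorated Severi variety, identify the tangent space of a branch with $H^0(\CI_\mu(d))/\langle f_0\rangle$, and deduce (1) from the fact that these tangent spaces intersect in $H^0(\CI_{\mu\cup\mu'}(d))/\langle f_0\rangle$; and both prove (2) by observing that the normalization of a general member of $\Br(\mu)$ specializes to the partial normalization of $C_0$ along $\mu$. The main difference is in packaging and in one lemma. You organize the local picture as a simple normal crossings configuration of hypersurfaces $D_p$, which makes (1) immediate; the paper instead works directly with the germs $U_{\vec\mu}$ of $U_{d,\delta}$ and compares tangent spaces explicitly. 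More substantively, you verify independence of the node conditions by an explicit construction (the product $\prod_{k\ne i,j}L_k$ times a conic separates the nodes of $C_0$), whereas the paper proves the stronger Lemma~\ref{lem:h1=0}, namely $h^1(C,\CI(d))=0$ for \emph{any} reduced degree-$d$ curve $C$ and any subset of $\delta$ nodes, via the conductor ideal and an induction on the number of components. Your direct argument is shorter and perfectly adequate for $C_0$; the paper's $h^1$-vanishing is what makes Proposition~\ref{prop:decorated severi} hold at every point of $U_{d,\delta}$, which is needed elsewhere. The concern you flag at the end---that a general member of $\bigcap_{p\in\mu}D_p$ acquires no extra singularities---is handled exactly as you indicate: singularities of a nearby curve lie near nodes of $C_0$, and the nondegenerate Hessian of $xy$ (valid in every characteristic) forces at most one node near each, so no appeal to equinormalizability in families is needed here.
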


Throughout this section, we fix the degree $d\ge 1$, the genus ${1-d\le g\le \binom{d-1}{2}}$, and a general union of lines $[C_0]\in V_{1-d,d}$. Recall that for a projective algebraic curve $C$ and a point $p\in C$, the {\em $\delta$-invariant} of $C$ at $p$ is defined to be $\delta(C;p):=\dim\left((\CO_{C^\nu}/\CO_C)\otimes \CO_{C,p}\right)$, where $C^\nu$ denotes the normalization of $C$. Plainly, $\delta(C;p)\ne 0$ if and only if $p$ is singular. The total $\delta$-invariant of $C$ is then defined to be $\delta(C):=\dim\left(\CO_{C^\nu}/\CO_{C}\right)=\sum_{p\in C}\delta(C;p)$. Recall also, that $\delta(C)=\pa(C)-\pg(C)$ is the difference between the arithmetic and the geometric genera; see, e.g., \cite[Theorem~8]{Ros52}. We set $\delta:=\binom{d-1}{2}-g$, which is the $\delta$-invariant of an irreducible curve of degree $d$ and geometric genus $g$ in $\PP^2$. In order to analyze the local geometry of $\overline V_{g,d}$ along $V_{1-d,d}$, it is convenient to consider the decorated Severi varieties as introduced in \cite{Tyo07}.

\begin{defn}\label{defn:decorated severi variety}
The \emph{decorated Severi variety} is the incidence locus $U_{d,\delta}\subset |\mathcal O_{\mathbb P^2}(d)|\times (\mathbb P^2)^\delta$ consisting of the tuples $[C;p_1,\dotsc,p_\delta]$, in which $C$ is a reduced curve of degree $d$, and $p_1,\dotsc,p_\delta$ are distinct nodes of $C$. The union of the irreducible components $U\subseteq U_{d,\delta}$, for which $C$ is irreducible, is also called a decorated Severi variety, and is denoted by $U^\irr_{d,\delta}$.
\end{defn}

The following is a version of \cite[Proposition~2.11]{Tyo07}.

\begin{prop}\label{prop:decorated severi} Let $\mathbf{C}:=[C;p_1,\dotsc,p_\delta]\in U_{d,\delta}$ be a point, and $\phi\:|\mathcal O_{\mathbb P^2}(d)|\times (\mathbb P^2)^\delta\to |\mathcal O_{\mathbb P^2}(d)|$ the natural projection. Then,
\begin{enumerate}
\item The restriction of $d\phi$ to the tangent space $T_{\mathbf{C}}(U_{d,\delta})$ is injective;
\item The variety $U_{d,\delta}$ is smooth of pure dimension $\dim(U_{d,\delta})=\binom{d+2}{2}-1-\delta=3d+g-1$;
\item $\phi(\overline{U}_{d,\delta})=\overline{V}_{g,d}$, and $\left(\phi|_{\overline{U}_{d,\delta}}\right)^{-1}(\overline{V}_{g,d}^\irr)=\overline{U}_{d,\delta}^\irr$.
\end{enumerate}
\end{prop}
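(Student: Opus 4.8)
The three assertions are intertwined, and the plan is to establish them essentially in the order (1), (2), (3), with (1) doing most of the work. For (1), the point is that a tangent vector to $U_{d,\delta}$ at $\mathbf C$ is a pair: a first-order deformation $\dot F$ of the defining polynomial $F$ of $C$, together with first-order motions $\dot p_1,\dots,\dot p_\delta$ of the marked nodes, subject to the incidence condition that each $p_i$ remains a node of the deforming curve to first order. If $d\phi$ kills this vector, then $\dot F = 0$, i.e.\ $C$ does not move; I must then show the $\dot p_i$ are all forced to vanish. This is local analytic at each $p_i$: writing $C$ near $p_i$ in suitable coordinates as $xy = 0$ (or, in positive characteristic, as an ordinary node, still with local equation of the form $xy + (\text{higher order})$, which after an analytic change of coordinates is $xy$), the condition that a constant deformation $\dot F = 0$ of $C$ continue to carry a node at the moving point $p_i + \epsilon\,\dot p_i$ forces $\dot p_i = 0$, since the singular locus of the fixed curve $C$ is the fixed finite set $C^{\sing}$ and cannot move. (One has to be slightly careful about what ``node'' means in characteristic $2$ and that the relevant deformation-theoretic computation — the node imposes the expected codimension-one condition and the family of nodes is unramified over the base — is characteristic-free; this is the one place where I would invoke the standard local analysis of the equisingular/equinodal locus, e.g.\ via the fact that the node is a rigid singularity whose versal deformation is $1$-dimensional with smooth discriminant.) Thus $d\phi|_{T_{\mathbf C}(U_{d,\delta})}$ is injective.

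For (2), the standard incidence-variety dimension count: the second projection $U_{d,\delta}\to (\PP^2)^\delta$ (onto the tuples of would-be nodes, restricted to configurations of $\delta$ distinct points in general position) has fibers that are linear subspaces of $|\CO(d)|$ cut out by the $3\delta$ linear conditions ``$p_i$ is a singular point of $C$'' ($F(p_i)=0$ and both partials vanish — though $F(p_i)=0$ is implied, so really $3\delta$ conditions, or rather the condition ``$\operatorname{mult}_{p_i}C\ge 2$'' which is $3$ linear conditions when the $p_i$ are general). Imposing a node at a general point is $3$ independent linear conditions on $|\CO(d)|$ when $\delta\le\binom{d-1}{2}$, since $h^1$ of the relevant twisted ideal sheaf vanishes in this range; hence over an open dense locus the fibers have dimension $\binom{d+2}{2}-1-3\delta$, giving $\dim U_{d,\delta}=2\delta + \binom{d+2}{2}-1-3\delta=\binom{d+2}{2}-1-\delta$. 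A direct computation rewrites this as $3d+g-1$ using $\delta=\binom{d-1}{2}-g$ and $\binom{d+2}{2}-1=\tfrac{d(d+3)}{2}$. Smoothness and purity of dimension then follow from (1): the injectivity of $d\phi$ on tangent spaces bounds $\dim T_{\mathbf C}(U_{d,\delta})\le \binom{d+2}{2}-1$ at every point, while a more refined version of the same infinitesimal analysis (computing the tangent space as an $H^0$ of the normal sheaf of $C$ twisted down at the nodes, whose $H^1$ vanishes by the genus/degree bound exactly as in the proof of Proposition~\ref{prop:severi variety dimension}) shows the tangent space has dimension exactly $3d+g-1$ everywhere; hence $U_{d,\delta}$ is smooth and equidimensional.

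For (3), the inclusion $\phi(\overline U_{d,\delta})\subseteq\overline V_{g,d}$ is clear once one knows a general point of each component of $U_{d,\delta}$ maps to a curve of geometric genus $g$ — indeed a general such $C$ has exactly $\delta$ nodes and no worse singularities (a general member of the fiber over a general node-configuration is nodal), hence geometric genus $\binom{d-1}{2}-\delta=g$. For the reverse inclusion, given $[C]\in V_{g,d}$ general in a component, $C$ is nodal with exactly $\delta$ nodes by Zariski's theorem (Corollary~\ref{cor:zariski}, available since $\dim$ of that component is $3d+g-1$), so labelling its nodes gives a point of $U_{d,\delta}$ over $[C]$; passing to closures gives $\overline V_{g,d}\subseteq\phi(\overline U_{d,\delta})$, and by dimension reasons (both sides equidimensional of dimension $3d+g-1$, $\phi$ restricted to $\overline U_{d,\delta}$ generically finite by (1)) these are equal. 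The statement $\phi^{-1}(\overline V_{g,d}^{\irr})=\overline U_{d,\delta}^{\irr}$ then follows because irreducibility of $C$ is an open condition that is constant on $\phi$-fibers (the curve doesn't depend on the labelling of its nodes), so the components of $\overline U_{d,\delta}$ over $\overline V_{g,d}^{\irr}$ are precisely those whose general curve is irreducible.

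The main obstacle is assertion (1) in positive characteristic: one must be sure that the local deformation-theoretic statement ``a fixed nodal curve cannot move its nodes'' — equivalently, that the equinodal locus is cut out transversally and the node imposes exactly the expected conditions — holds without any tameness hypothesis. In characteristic zero this is classical (the discriminant of the versal deformation $xy=t$ of a node is the reduced point $t=0$, smooth); in characteristic $p$, and especially $p=2$, one should double-check that ``node'' is being used in the sense of an $A_1$-singularity with local equation formally equivalent to $xy$, for which the same versal picture is valid, and that the map from $U_{d,\delta}$ to the base of this versal deformation at each $p_i$ is smooth. This is precisely the kind of subtlety flagged in the remark after Lemma~\ref{lma:normalization}, but here — unlike for the general equinormalizable base change — the singularity is a node, which is rigid and tame enough that the characteristic-zero computation goes through verbatim; I would cite the relevant characteristic-free treatment of deformations of nodes rather than re-derive it.
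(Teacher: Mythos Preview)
Your approach is essentially the paper's, but two points deserve comment.

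For (1) and (2), the paper proceeds by an explicit computation in affine coordinates: writing $C$ as the zero locus of $g(x,y)$ and imposing the conditions $f(p_l)=f_x(p_l)=f_y(p_l)=0$, the tangent space to $U_{d,\delta}$ is cut out by $3\delta$ linear equations. Setting $da_{ij}=0$, these equations reduce to the Hessian of $g$ at each $p_l$ acting on $(dx_l,dy_l)$; since $p_l$ is a node, the Hessian is invertible, and (1) follows. This is characteristic-free (the node $xy=0$ has Hessian of determinant $-1$ in every characteristic), so your worry about $p=2$ dissolves once you write things down explicitly; no appeal to versal deformations is needed.

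The substantive gap in your sketch is the $H^1$ vanishing you invoke for smoothness in (2). The relevant statement is $h^1(C,\CI(d))=0$, where $\CI$ is the ideal sheaf of the $\delta$ marked nodes on the possibly reducible, possibly worse-than-nodal curve $C$. This is \emph{not} the vanishing appearing in Proposition~\ref{prop:severi variety dimension}, which concerns the normal sheaf on the normalization; the two are related but not interchangeable. The paper isolates this as a separate Lemma~\ref{lem:h1=0} and proves it by induction on the number of irreducible components of $C$, using the conductor ideal in the irreducible case. Once you have this vanishing, the exact sequence $0\to H^0(C,\CI(d))\to H^0(C,\CO_C(d))\to H^0(Z,\CO_Z)\to 0$ shows the $\delta$ reduced tangent-space equations are independent, and the Jacobian criterion gives both smoothness and the dimension at once. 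Your detour through the second projection to $(\PP^2)^\delta$ is then unnecessary.

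For (3), your argument for the forward inclusion asserts that a general member of each component of $U_{d,\delta}$ is nodal with exactly $\delta$ nodes, which you do not justify. The paper avoids this: a reduced curve with $\delta$ marked nodes has geometric genus at most $g$, so $\phi(U_{d,\delta})\subseteq\bigcup_{g'\le g}\overline V_{g',d}$; then purity of dimension from (2), together with $\dim V_{g',d}<3d+g-1$ for $g'<g$, forces $\phi(\overline U_{d,\delta})$ to be a union of components of $\overline V_{g,d}$. The reverse inclusion via Corollary~\ref{cor:zariski} is as you say.
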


\begin{proof}
The proof is a rather straight-forward computation. After removing a general line from $\PP^2$, we get an open affine plane $\mathbb A^2=\Spec K[x,y]\subset \mathbb P^2$ containing all the $p_l$'s. We denote the coordinates $x,y$ on the $l$-th copy of $\PP^2$ in $(\PP^2)^\delta$ by $x_l,y_l$, and identify $H^0(\PP^2,\mathcal O_{\mathbb P^2}(d))$ with the space of polynomials of degree at most $d$ in the variables $x$ and $y$. Then the decorated Severi variety $U_{d,\delta}$ is given locally at $\mathbf{C}$ by the system of $3\delta$ (homogeneous in the coefficients of $F$) equations:
$$F(x_l,y_l)=F_x(x_l,y_l)=F_y(x_l,y_l)=0,$$
where $F=\sum_{i,j}a_{ij}x^iy^j\in H^0(\PP^2,\CO_{\PP^2}(d))$, and $F_x,F_y$ are its partial derivatives.

Let $G\in H^0(\PP^2,\CO_{\PP^2}(d))$ be a polynomial defining $C$, and for each $l$, let $(\lambda_l,\mu_l)$ be the coordinates of the point $p_l$. Then the tangent space $T_{\mathbf{C}}(U_{d,\delta})$ is given by the system of $3\delta$ equations
\begin{equation}\label{eq:syseqtan}
\forall l\;\;  \left\{
  \begin{array}{l}
    \sum_{i,j}\lambda_l^i\mu_l^jda_{ij}+G_x(p_l)dx_l+G_y(p_l)dy_l=0\\
    \sum_{i,j}i\lambda_l^{i-1}\mu_l^jda_{ij}+G_{xx}(p_l)dx_l+G_{xy}(p_l)dy_l=0 \\
    \sum_{i,j}j\lambda_l^i\mu_l^{j-1}da_{ij}+G_{xy}(p_l)dx_l+G_{yy}(p_l)dy_l=0
  \end{array}
\right.
\end{equation}
Notice that the kernel $\Ker(d\phi)$ is given by $da_{ij}=0$ for all $i$ and $j$. However, since $p_l\in C$ is a node for each $l$, the matrices
$$\left[\begin{matrix}
G_{xx}(p_l)&G_{xy}(p_l)\\G_{xy}(p_l)&G_{yy}(p_l)\end{matrix}\right]$$
are invertible, and $G_x(p_l)=G_y(p_l)=0$. Therefore, the intersection $T_{\mathbf{C}}(U_{d,\delta})\cap \Ker(d\phi)$ is zero, and assertion (1) follows. Furthermore, the dimension of the space of solutions of \eqref{eq:syseqtan} is equal to the dimension of the space of solutions of the following system of $\delta$ equations:
\begin{equation}\label{eq:syseqtan2}
\forall l\;\; \sum_{i,j}\lambda_l^i\mu_l^jda_{ij}=0,
\end{equation}
and if the latter system has full rank, then, by the Jacobian criterion, the variety  $U_{d,\delta}$ is smooth of dimension $\dim(U_{d,\delta})=\binom{d+2}{2}-1-\delta$.

Using the canonical identification $T_{[C]}(|\mathcal O_{\mathbb P^2}(d)|)\cong H^0(C,\CO_C(d))$, we conclude that the differential $d\phi$ induces an isomorphism
$T_{\mathbf{C}}(U_{d,\delta})\to H^0(C,\CI(d))\subset H^0(C,\CO_C(d))$, where $\CI$ is the ideal sheaf of the scheme of nodes $Z:=\bigcup_{l=1}^\delta p_l$. In order to proceed, we need the following Lemma, whose proof we postpone.

\begin{lem}\label{lem:h1=0}
$h^1(C,\CI(d))=0$.
\end{lem}

It follows from the lemma that the sequence
$$0\to H^0(C,\CI(d))\to H^0(C,\CO_C(d))\to H^0(Z,\CO_Z)\to 0$$
is exact. Thus,
$h^0(C,\CO_C(d))-h^0(C,\CI(d))=h^0(Z,\CO_Z)=\delta$, and hence the system \eqref{eq:syseqtan2} has full rank. Therefore, as explained above, $U_{d,\delta}$ is smooth of pure dimension $\binom{d+2}{2}-1-\delta$ as asserted in (2).

To prove (3), notice that the fibers of $\phi|_{U_{d,\delta}}$ are finite. Thus, $\phi(\overline{U}_{d,\delta})$ is pure dimensional of dimension $\binom{d+2}{2}-1-\delta=3d+g-1$. However, by the very definition, $\phi(U_{d,\delta})\subseteq \bigcup_{g'\le g}\overline{V}_{g',d}$, and $\dim(V_{g',d})=3d+g'-1<3d+g-1$ for any $g'<g$ by Proposition~\ref{prop:severi variety dimension} and Remark~\ref{rem:dimvgd}. Thus, $\phi(\overline{U}_{d,\delta})$ is a union of irreducible components of $\overline{V}_{g,d}$. On the other hand, by Corollary~\ref{cor:zariski}, any irreducible component $V\subseteq \overline{V}_{g,d}$ admits a dense open subset that parameterizes nodal curves, and hence belongs to $\phi(U_{d,\delta})$. Therefore, $\overline{V}_{g,d}\subseteq \phi(\overline{U}_{d,\delta})$, and hence $\phi(\overline{U}_{d,\delta})=\overline{V}_{g,d}$. The last assertion of (3) now follows from the definitions of $\overline{V}_{g,d}^\irr$ and $\overline{U}_{d,\delta}^\irr$.
\end{proof}

\begin{proof}[Proof of Lemma~\ref{lem:h1=0}]
The lemma is identical to \cite[Claim~2.12]{Tyo07}, and the proof given in {\em loc.cit.} works in arbitrary characteristic. For the completeness of presentation, we include a variation of this proof that proceeds by induction on the number of irreducible components of $C$.

 If $C$ is irreducible, consider the conductor ideal $\CI^{\rm cond}\subseteq \CO_{C}$, i.e., the annihilator of $\CO_{C^\nu}/\CO_C$, where $C^\nu\to C$ denotes the normalization of $C$. Notice that $\CI^{\rm cond}$ is an ideal also in $\CO_{C^\nu}$ under the natural embedding $\CO_C\subseteq \CO_{C^\nu}$. Thus, $H^i(C,\CI^{\rm cond}(d))=H^i(C^\nu,\CI^{\rm cond}(d))$ for all $i$, since the fibers of $C^\nu\to C$ are zero-dimensional. It follows from the definition that the vanishing locus of $\CI^{\rm cond}$ is $C^\sing$, and hence $\CI^{\rm cond}\subseteq\CI$. Consider the exact sequence of cohomology
$$H^1(C,\CI^{\rm cond}(d))\to H^1(C,\CI(d))\to H^1(C,\CI/\CI^{\rm cond}\otimes\CO_C(d)).$$
Since the quotient $\CI/\CI^{\rm cond}$ is a torsion sheaf, the group $H^1(C,\CI/\CI^{\rm cond}\otimes\CO_C(d))$ vanishes, and hence the map $H^1(C, \CI^{\rm cond}(d))\to H^1(C, \CI(d))$ is surjective. However, by \cite[Theorem~14]{Ros52}, the degree of the invertible sheaf $\CI^{\rm cond}(d)$ on $C^\nu$ is given by
$$c_1(\CI^{\rm cond}(d))=d^2-2\delta(C)=d^2+2p_g(C^\nu)-(d-2)(d-1)=3d+2g-2>2g-2,$$
and hence $h^1(C,\CI^{\rm cond}(d))=h^1(C^\nu,\CI^{\rm cond}(d))=0$, which implies $h^1(C, \CI(d))=0$.

To prove the induction step, let $C_1\subset C$ be an irreducible component of degree $d_1<d$, and $C_2$ the union of the other irreducible components. Set $d_2:=d-d_1$, and $Z_i:=(Z\cap C_i)\setminus C_{3-i}$ for $i=1,2$. Let $\CI_i\subset \CO_{C_i}$ be the ideal sheaf of $Z_i$ and $f_i\in H^0(C,\CO_C(d_i))$ a defining equation of $C_i$. Finally, denote by $\iota$ the closed immersion $C_1\hookrightarrow C$. Consider the exact sequence
\begin{equation}\label{eq:exactseq}
0\to \iota_*\CI_1(d_1)\to \CI(d)\to \CF(d)\to 0,
\end{equation}
where the first map is multiplication by $f_2$. Then $\CF$ is supported on $C_2$. Furthermore, it is the ideal sheaf of $Z_2$ union with the zero-dimensional scheme defined by the ideal $\langle f_1, f_2\rangle$. In particular, there is a natural embedding $\CI_2(d_2)\hookrightarrow \CF(d)$ given by multiplication by $f_1$, whose cokernel $\CG$ is a torsion sheaf. We conclude that there
is an exact sequence of cohomology $$H^1(C_2,\CI_2(d_2))\to H^1(C_2,\CF(d))\to H^1(C_2,\CG),$$ in which the first group vanishes by the induction assumption, and the last group vanishes since $\CG$ is a torsion sheaf. Thus, $H^1(C_2,\CF(d))=0$. Similarly, we get an exact sequence of cohomology $$H^1(C_1,\CI_1(d_1))=H^1\left(C,\iota_*\CI_1(d_1)\right)\to H^1(C,\CI(d))\to H^1(C,\CF(d))=0$$ associated to \eqref{eq:exactseq}. And again, the first group vanishes by the induction assumption. Therefore $h^1(C,\CI(d))=0$, which completes the proof.
\end{proof}

\begin{defn}
A {\em $\delta$-marking} on $C_0$ is an ordered subset $\vec\mu\subseteq C_0^\sing$ of cardinality $\delta$. A $\delta$-marking $\vec\mu$ is called {\em irreducible} if $C_0\setminus\vec\mu$ is connected.
\end{defn}

\begin{rem}
Since $\delta$ and $C_0$ are fixed, we will usually call $\delta$-markings on $C_0$ simply {\em markings}. For a marking $\vec\mu$ we denote the underlying set by $\mu$.
\end{rem}

\begin{proof}[Proof of Theorem~\ref{thm:branchstr}]
Let $\vec\mu=(p_1,\dotsc, p_\delta)$ be a marking. Consider the germ $U_{\vec\mu}$ of the decorated Severi variety $U_{d,\delta}$ at $[C_0;p_1,\dotsc, p_\delta]$. By Proposition~\ref{prop:decorated severi}, it is smooth of dimension $3d+g-1$, and $U_{\vec\mu}$ is isomorphic to its image under the natural projection $\phi$ to $|\mathcal O_{\mathbb P^2}(d)|$. Thus, $\phi(U_{\vec\mu})$ is a smooth branch of $\overline V_{g,d}$. Furthermore, the germ of $\overline V_{g,d}$ is covered by such branches. Plainly, $\phi(U_{\vec\mu})$ depends only on the underlying set $\mu$, and we set $\Br(\mu):=\phi(U_{\vec\mu})$. On the other hand, the set $\mu$ is determined by the branch $\Br(\mu)$. Indeed, a general curve $[C]\in \Br(\mu)$ corresponds to a nodal curve $C$ of geometric genus $g$ by Corollary~\ref{cor:zariski}. Thus, it has precisely $\delta$ nodes, and the set $\mu$ is the specialization of the set of nodes of $C$ to $C_0$. We conclude that the germ of $\overline V_{g,d}$ at $[C_0]$ consists of smooth branches indexed by subsets $\mu\subseteq C_0^\sing$ of cardinality $\delta$.

To prove (1), let $\mu=\{p_1,\dotsc,p_\delta\}$ and $\mu'=\{p'_1,\dotsc,p'_\delta\}$ be two subsets of $C_0^\sing$ of cardinality $\delta$. Set $r:=|\mu\cap\mu'|$. Without loss of generality we may assume that $p_i=p'_i$ for all $1\le i\le r$. Then the intersection $\Br(\mu)\cap\Br(\mu')$ contains $\Br(\mu\cup\mu')$, which is the image of the germ of $U_{d,2\delta-r}$ at $[C_0;p_1,\dotsc p_\delta,p'_{r+1},\dotsc,p'_\delta]$. Recall that $\Br(\mu\cup\mu')$ is smooth of dimension $\binom{d+2}{2}-1-(2\delta-r)$ by Proposition~\ref{prop:decorated severi}, and let us show that the scheme-theoretic intersection $\Br(\mu)\cap\Br(\mu')$ coincides with $\Br(\mu\cup\mu')$. To do so, we compare the tangent spaces.

Let $\CI\subseteq\CO_{C_0}$, $\CI'\subseteq\CO_{C_0}$, and $\CJ\subseteq\CO_{C_0}$ be the ideal sheaves of $\mu$, $\mu'$, and $\mu\cup\mu'$ respectively. Then the tangent space to $\Br(\mu)$ is given by
$T_{[C_0]}(\Br(\mu))=H^0(C_0,\CI(d))\subseteq H^0(C_0,\CO_{C_0}(d)),$
cf. the proof of Proposition~\ref{prop:decorated severi}. Similarly, the tangent space to $\Br(\mu')$ is given by
$T_{[C_0]}(\Br(\mu'))=H^0(C_0,\CI'(d))$, and to $\Br(\mu\cup\mu')$ by $T_{[C_0]}(\Br(\mu\cup\mu'))=H^0(C_0,\CJ(d))$. Thus,
$$T_{[C_0]}(\Br(\mu))\cap T_{[C_0]}(\Br(\mu'))=H^0(C_0,\CI'(d))\cap H^0(C_0,\CI(d))=H^0(C_0,\CJ(d))=T_{[C_0]}(\Br(\mu\cup\mu')),$$
which implies $\Br(\mu)\cap\Br(\mu')=\Br(\mu\cup\mu')$ scheme-theoretically. In particular, $\Br(\mu)\cap\Br(\mu')$ is smooth of dimension $\binom{d+2}{2}-1-(2\delta-r)$. However, by Proposition~\ref{prop:decorated severi}, $\Br(\mu)$ is smooth of dimension $\binom{d+2}{2}-1-\delta$, and hence the codimension of $\Br(\mu)\cap\Br(\mu')$ in $\Br(\mu)$ is $\delta-r=|\mu'\setminus\mu|$ as asserted.

To prove (2), notice that a general curve $C$ in a given branch $\Br(\mu)$ is irreducible, if and only if its normalization $C^\nu$ is so. But $C^\nu$ specializes to the partial normalization of $C_0$ at $\mu$, and hence $C^\nu$ is irreducible if and only if the partial normalization of $C_0$ at $\mu$ is connected. The latter is clearly equivalent to $C^0\setminus \mu$ being connected.
\end{proof}

\section{The proof of the Main Theorem}\label{sec:proofMT}

The Main Theorem follows from the following stronger assertion about the irreducibility of decorated Severi varieties:

\begin{thm}\label{thm:irrdecsev}
The decorated Severi variety $U_{d,\delta}^\irr$ is either empty or irreducible. Furthermore, it is empty if and only if $\delta>\binom{d-1}{2}$.
\end{thm}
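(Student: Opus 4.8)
The plan is to combine the degeneration result (Theorem~\ref{thm:cont_lines}) — via its decorated incarnation contained in Theorem~\ref{thm:branchstr} — with a monodromy argument, following the general strategy that goes back to Severi and was made to work by Harris. First I would deal with the non-emptiness/emptiness dichotomy: if $\delta\le\binom{d-1}{2}$, then $g:=\binom{d-1}{2}-\delta\ge 0$, and a general union of $d$ lines $C_0$ gives a point $[C_0;p_1,\dots,p_\delta]\in U^\irr_{d,\delta}$ as soon as we can choose $\delta$ of the $\binom{d}{2}$ nodes of $C_0$ whose removal leaves $C_0$ connected (equivalently, the complementary $\binom{d-1}{2}$ nodes, read as edges of the complete graph $K_d$ on the $d$ lines, contain a spanning tree) — this is clearly possible. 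Conversely, a curve in $U^\irr_{d,\delta}$ is irreducible of degree $d$ with at least $\delta$ nodes, hence of geometric genus $\le\binom{d-1}{2}-\delta$, which forces $\delta\le\binom{d-1}{2}$; so $U^\irr_{d,\delta}=\emptyset$ exactly when $\delta>\binom{d-1}{2}$.

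For irreducibility, assume $\delta\le\binom{d-1}{2}$ and set $g:=\binom{d-1}{2}-\delta$. By Proposition~\ref{prop:decorated severi}, $U_{d,\delta}$ is smooth of pure dimension $3d+g-1$, so $U^\irr_{d,\delta}$ is a disjoint union of its irreducible components, each of that dimension. Let $W$ be such a component. The key geometric input is Theorem~\ref{thm:branchstr}: the closure $\overline W$ meets $V_{1-d,d}$. Indeed, $\phi(\overline W)$ is an irreducible component of $\overline V^{\irr}_{g,d}$ by Proposition~\ref{prop:decorated severi}(3), hence contains $V_{1-d,d}$ by Theorem~\ref{thm:cont_lines}; fixing a general $[C_0]\in V_{1-d,d}$, Theorem~\ref{thm:branchstr} describes the germ of $\overline V_{g,d}$ at $[C_0]$ as the union of smooth branches $\Br(\mu)$ over subsets $\mu\subseteq C_0^{\sing}$ of size $\delta$, with $\Br(\mu)\subseteq\overline V^{\irr}_{g,d}$ iff $C_0\setminus\mu$ is connected (``irreducible markings''). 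Unwinding the proof of that theorem, $\overline W$ contains the image under $\phi$ of the germ $U_{\vec\mu}$ of $U_{d,\delta}$ at $[C_0;p_1,\dots,p_\delta]$ for at least one irreducible ordered marking $\vec\mu$, and since $U_{\vec\mu}\to\Br(\mu)$ is an isomorphism of germs (Proposition~\ref{prop:decorated severi}(1)), the point $[C_0;\vec\mu]$ lies in $\overline W$, and a whole analytic neighborhood of it inside $U_{d,\delta}$ lies in $\overline W$. Thus every component $W$ of $U^\irr_{d,\delta}$ ``passes through'' at least one irreducible marking of $C_0$.

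It remains to show that there is a \emph{single} component through all irreducible markings of $C_0$; this is the monodromy step, and it is the main obstacle of the argument. Consider the incidence variety $\mathcal{N}\subseteq U^\irr_{d,\delta}\times(\PP^2)^{\binom{d-1}{2}-\delta}$ — or, more efficiently, work with the finite cover of $U^\irr_{d,\delta}$ near $V_{1-d,d}$ obtained by also marking the remaining $g$ nodes, i.e. a component of $U^\irr_{d,\binom{d-1}{2}}=U^\irr_{d,\delta'}$ with $\delta'=\binom{d-1}{2}$ lying over $W$. Over a general $[C_0]\in V_{1-d,d}$, the fiber of $\overline{U}^\irr_{d,\binom{d-1}{2}}\to \PP^{\check d}$ (restricted to the locus of reduced totally degenerate curves) is identified with the set of orderings of a \emph{spanning-tree-complementary} set of $\delta$ nodes of $C_0$, equivalently with (orderings of) the set of spanning trees of $K_d$ with a choice of $\delta$ non-tree edges; the fundamental group of the space of general line-arrangements acts on this fiber through the symmetric group $S_d$ permuting the $d$ lines (this is the analogue of Harris' monodromy lemma, and the transitivity of $S_d$ on ordered spanning trees of $K_d$ is elementary — one can pass between any two spanning trees by a sequence of edge swaps, each realized by a loop in the arrangement space, cf. the classical argument). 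Transitivity of this action on the fiber forces all the local branches $U_{\vec\mu}$ over $C_0$ — for $\vec\mu$ irreducible — to lie on one and the same component $W_0$ of $U^\irr_{d,\delta}$; combined with the previous paragraph (every component meets some irreducible marking over $C_0$), we get $W=W_0$ for all $W$, i.e. $U^\irr_{d,\delta}$ is irreducible. The point requiring genuine care is the monodromy/transitivity claim: one must (i) produce enough loops in the space of general line arrangements realizing the elementary transpositions of lines, checking that the marked nodes are carried along without collision, and (ii) verify that these transpositions act transitively on the relevant combinatorial fiber — which reduces, after the combinatorial translation above, to connectivity of the ``tree-swap'' graph on spanning trees of $K_d$, a standard fact. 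Everything else is bookkeeping with the already-established smoothness and dimension statements.
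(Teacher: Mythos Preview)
Your reduction to showing that every component of $U^\irr_{d,\delta}$ contains a point $[C_0;\vec\mu]$ for some irreducible marking $\vec\mu$ is correct and matches the paper's. The gap is in the monodromy step.

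You assert that the fundamental group of the space of general line arrangements acts on the fiber ``through the symmetric group $S_d$ permuting the $d$ lines'' and that this action is transitive on irreducible markings. The first claim is correct, but the second is false. An irreducible marking $\mu\subseteq C_0^{\sing}$ of size $\delta$ corresponds to a connected spanning subgraph of $K_d$ with $\binom{d}{2}-\delta$ edges (the complement $C_0^{\sing}\setminus\mu$), and $S_d$ acts on such subgraphs by relabeling vertices. Two subgraphs lie in the same $S_d$-orbit if and only if they are isomorphic as abstract graphs. Already for $d=4$, $\delta=3$ (so $g=0$), the complements are spanning trees of $K_4$, which come in two isomorphism types: paths and stars. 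So monodromy in $V_{1-d,d}$ cannot connect all irreducible markings. Your parenthetical about ``edge swaps, each realized by a loop in the arrangement space'' is precisely what is \emph{not} available: an edge swap on the complement is a transposition of two nodes of $C_0$ that does \emph{not} arise from a permutation of the lines, and no loop in $V_{1-d,d}$ will produce it. In positive characteristic there is the additional difficulty that a topological monodromy argument is not directly available.

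What the paper does instead is supply exactly the missing move, without any monodromy. It introduces an explicit equivalence relation on markings generated by transpositions $\tau$ swapping $q=L'\cap L''$ and $r=L\cap L''$ whenever $p=L\cap L'\notin\mu$ (Lemma~\ref{lem:eqirmar} shows combinatorially that all irreducible markings are equivalent), and then proves in Lemma~\ref{lem:eqmarkcorto1comp} that similar markings lie on the same component of $U_{d,\delta}$ by a direct, characteristic-free geometric construction: smoothing the two lines $L_1,L_2$ through the unmarked node $p=q_{1,2}$ to a conic $Q$, one gets an irreducible two-sheeted cover $\widetilde W$ of an open set of $(\PP^2)^*$ (the choice of labeling of the two intersection points of the moving line $L_3'$ with $Q$) sitting inside $U_{d,\delta}$, and both $[C_0;\vec\mu]$ and $[C_0;\vec\mu^\tau]$ lie in the closure of its image. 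This is the substitute for the ``edge-swap loop'' you were looking for. If you want to salvage your approach, you must leave the stratum $V_{1-d,d}$ and produce such a family (or an honest monodromy computation in a larger locus) realizing the non-$S_d$ transpositions; the conic trick is the cleanest way to do it.
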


Indeed, set $\delta=\binom{d-1}{2}-g$. Then $\delta\le\binom{d-1}{2}$ since $g\ge 0$. By Proposition~\ref{prop:decorated severi} (3), $\overline V_{g,d}^\irr$ is the image of $\overline U_{d,\delta}^\irr$ under the natural projection to $|\mathcal O_{\mathbb P^2}(d)|$. By Theorem~\ref{thm:irrdecsev}, $U_{d,\delta}^\irr$ is non-empty and irreducible, and hence so is $V_{g,d}^\irr$. Finally, by Proposition~\ref{prop:severi variety dimension}, the Severi variety $V_{g,d}$ is either empty or equidimensional of dimension $3d+g-1$. Thus, $\dim(V_{g,d}^\irr)=3d+g-1$, which completes the proof of the Main Theorem. \qed

\begin{proof}[The proof of Theorem~\ref{thm:irrdecsev}]

If $\delta>\binom{d-1}{2}$, then $U_{d,\delta}^\irr=\emptyset$ since the geometric genus of any plane curve of degree $d$ with $\delta$ nodes is at most $\frac{d-1}{2}-\delta<0$, and hence none of such curves is irreducible. Vice versa, assume that $\delta\le\binom{d-1}{2}$. Then $\delta\le\binom{d-1}{2}=\binom{d}{2}-(d-1)=|C_0^\sing|-(d-1)$. Pick an irreducible component $L$ of $C_0$. Then the number of nodes of $C_0$ that belong to $L$ is $d-1$, and hence there exists a marking $\vec\mu$ disjoint from $L$. Since $L$ intersects any other component of $C_0$, it follows that $\vec\mu$ is an irreducible marking. Thus, by Theorem~\ref{thm:branchstr} (2), $V_{g,d}^\irr$ is not empty, and hence so is $U_{d,\delta}^\irr$ by Proposition~\ref{prop:decorated severi} (3). We conclude that $U_{d,\delta}^\irr=\emptyset$ if and only if $\delta>\binom{d-1}{2}$.

Assume now that $U_{d,\delta}^\irr$ is not empty, and let us show that it is irreducible. We start by defining an equivalence relation on the set of markings. Let $\fG$ be the full group of symmetries of the set $C_0^\sing$ of $\binom{d}{2}$ nodes of $C_0$. The group $\fG$ acts naturally on the set of markings, and we use the notation $\vec\mu^\sigma$ to denote the image of a marking $\vec\mu$ under the action of $\sigma\in\fG$. Let $\vec\mu$ be a marking, and $L,L',L''$ three different components of $C_0$. Set $p:=L\cap L'$, $q:=L'\cap L''$, and $r:=L\cap L''$, and let $\tau\in \fG$ be the transposition switching $q$ and $r$. If $p\notin\mu$, then we say that $\vec\mu^\tau$ is {\em similar} to $\vec\mu$. Plainly, $\vec\mu$ is irreducible if and only if so is $\vec\mu^\tau$. We define the equivalence relation $\sim$ on the set of markings to be the minimal equivalence relation for which any pair of similar markings is equivalent. Since similarity is a symmetric relation, the equivalence relation $\sim$ is nothing but the transitive closure of the similarity relation. Theorem~\ref{thm:irrdecsev} now follows from the following two lemmata -- the first is purely combinatorial, and the second is algebraic.

\begin{lem}\label{lem:eqirmar}
Any two irreducible markings are equivalent.
\end{lem}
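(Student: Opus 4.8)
The plan is to prove Lemma~\ref{lem:eqirmar} by a purely combinatorial argument on the set of $\binom{d}{2}$ nodes of $C_0 = L_1 \cup \dots \cup L_d$, where $L_1,\dots,L_d$ are the $d$ lines in general position. I identify $C_0^\sing$ with the set of unordered pairs $\{i,j\}$, $1\le i<j\le d$, where the node $p_{ij}:=L_i\cap L_j$ corresponds to the pair $\{i,j\}$. A marking $\vec\mu$ is then an ordered $\delta$-subset of these pairs; since similarity and irreducibility depend only on the underlying set, it suffices to work with unordered markings $\mu$ (the ordering is handled trivially at the end, since the transpositions in $\fG$ act on orderings as well, and permuting the order is achieved by further similarities or is simply irrelevant to the equivalence class — I should state this reduction explicitly). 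Recall $\mu$ is irreducible iff the graph on vertices $\{1,\dots,d\}$ with edge set the \emph{complement} $C_0^\sing\setminus\mu$ is connected; equivalently, thinking of $\mu$ as a set of edges to delete from the complete graph $K_d$, irreducibility means $K_d\setminus\mu$ is connected.

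**The key move and the target normal form.** The similarity operation has the following graph-theoretic description: given three vertices $i,j,k$ with the edge $\{i,j\}\notin\mu$, the transposition $\tau$ swaps whether $\{j,k\}$ or $\{i,k\}$ lies in $\mu$ — i.e. if exactly one of $\{i,k\},\{j,k\}$ is in $\mu$, similarity lets us move that edge to the other slot, provided the ``bridge'' edge $\{i,j\}$ is present in $K_d\setminus\mu$. (If both or neither of $\{i,k\},\{j,k\}$ are in $\mu$, then $\tau$ fixes $\mu$.) So similarity = ``slide an edge of $\mu$ along a present edge, pivoting at a common endpoint.'' The plan is to show every irreducible marking is $\sim$-equivalent to one fixed normal-form marking $\mu_0$. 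A convenient choice: let $\mu_0$ consist of all edges of $K_d$ \emph{not} incident to vertex $1$ and not among a fixed spanning structure — more precisely, since $|C_0^\sing\setminus\mu| = \binom d2-\delta \ge d-1$, pick $\mu_0$ so that $K_d\setminus\mu_0$ is, say, the star centered at vertex $1$ together with $\binom d2-\delta-(d-1)$ extra edges chosen greedily; the star guarantees connectivity. I would then argue: (i) any irreducible $\mu$ can be moved by similarities so that the complement contains the full star at vertex $1$ (use the present edges in $K_d\setminus\mu$ to slide offending $\mu$-edges incident to $1$ outward — connectivity of the complement is exactly what provides the needed bridge edges, and one must check the slide can always be performed without first destroying connectivity, which holds because sliding an edge of $\mu$ preserves the edge count and one slides along a path in the complement); (ii) once the star at $1$ is in the complement, the remaining $\mu$-edges all avoid vertex $1$, and I can freely permute/rearrange them among the $\binom{d-1}{2}$ pairs on $\{2,\dots,d\}$ using slides pivoting through vertex $1$ (every such pair $\{j,k\}$ shares a present path $j-1-k$), reaching $\mu_0$.

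**Execution order.** First I would record the graph reformulation (irreducible $\leftrightarrow$ $K_d\setminus\mu$ connected) and the combinatorial description of similarity as an edge-slide, including the observation that a slide pivoting at $i$ along present edge $\{i,j\}$ preserves both $|\mu|$ and — crucially — irreducibility (this is already noted in the excerpt: ``$\vec\mu$ is irreducible iff so is $\vec\mu^\tau$''). Second, I would prove the reduction to normal form in two stages as above: clearing edges of $\mu$ incident to a chosen vertex, then homogenizing the rest. Third, I would conclude that $\sim$ has a single equivalence class among irreducible markings, hence any two are equivalent. Finally, I would reinstate orderings: two orderings of the same underlying set $\mu$ differ by a permutation of $\{1,\dots,\delta\}$, and such a permutation of the \emph{labels} of the marked points is irrelevant (the equivalence relation $\sim$ was defined on markings-with-order, but the relation ``$\vec\mu$ similar to $\vec\mu^\tau$'' together with trivial re-slides realizes arbitrary relabelings — or, more cleanly, one observes the whole argument and the statement of Theorem~\ref{thm:irrdecsev} only care about the underlying set, so I would phrase Lemma~\ref{lem:eqirmar} for sets and note orderings are harmless).

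**Main obstacle.** The delicate point is step (i): when I slide an edge of $\mu$ off vertex $1$, I need a present (complement) edge to pivot along, and I need to be sure that after the slide the complement is still connected so that subsequent slides remain legal. Connectivity is preserved by any single legal slide (that's the ``irreducible iff $\tau$-image irreducible'' fact), so the real content is just choosing, at each stage, an edge $\{1,j\}\in\mu$ and a vertex $k$ with $\{1,k\}$ present and $\{j,k\}\notin\mu$ (so that the slide actually moves $\{1,j\}$ to $\{j,k\}$, reducing the number of $\mu$-edges at vertex $1$) — and showing such $j,k$ exist as long as $\mu$ still meets vertex $1$. I expect this to follow from a counting/pigeonhole argument using $|\mu|=\delta\le\binom{d-1}{2}$, together with connectivity of the complement; this counting is the one genuinely non-formal kernel of the proof, and I would lay it out carefully with a small figure illustrating the slide, deferring the rest, which is bookkeeping.
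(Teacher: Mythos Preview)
Your approach is essentially the paper's: both reduce to the graph model on $K_d$, first clear all $\mu$-edges incident to a fixed vertex (your step~(i), the paper's first paragraph), and then use the resulting star to move freely among the remaining edges (your step~(ii), the paper's second and third paragraphs). Two small points where the paper is sharper: in step~(i) the existence of the needed slide is a pure connectivity argument, not pigeonhole---if every edge from $A=\{j:\{1,j\}\in\mu\}$ to $B=\{k\ne 1:\{1,k\}\notin\mu\}$ lay in $\mu$, then $A$ would be a component of $K_d\setminus\mu$, contradicting irreducibility; and in step~(ii) the paper proves directly that $\vec\mu\sim\vec\mu^\tau$ for \emph{every} transposition $\tau$ of nodes off $L_1$, which immediately gives the full symmetric group and so handles ordered markings without the separate ``reinstate orderings'' discussion you flag at the end.
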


\begin{lem}\label{lem:eqmarkcorto1comp}
If the markings $(p_1,\dotsc,p_\delta)$ and $(p'_1,\dotsc,p'_\delta)$ are equivalent, then $[C_0;p_1,\dotsc,p_\delta]$ and $[C_0;p'_1,\dotsc,p'_\delta]$ belong to the same irreducible component of $U_{d,\delta}$.
\end{lem}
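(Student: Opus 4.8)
The plan is to reduce first, using that $\sim$ is generated by the similarity relation together with transitivity, to the case where the two markings are \emph{similar}: there are three components $L,L',L''$ of $C_0$ with $p:=L\cap L'\notin\mu$, and, writing $q:=L'\cap L''$ and $r:=L\cap L''$, the markings $\vec\mu=(p_1,\dots,p_\delta)$ and $\vec\mu^\tau=(p'_1,\dots,p'_\delta)$ differ only by interchanging the entries $q$ and $r$ (the cases where none or both of $q,r$ lie in $\mu$ being trivial, resp. handled identically). By Proposition~\ref{prop:decorated severi} the scheme $U_{d,\delta}$ is smooth, so its irreducible components coincide with its connected components; hence it is enough to exhibit $\mathbf C:=[C_0;p_1,\dots,p_\delta]$ and $\mathbf C':=[C_0;p'_1,\dots,p'_\delta]$ in a single connected (even irreducible) subvariety of $U_{d,\delta}$.

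The idea is to ``unbolt'' the node $p$. Write $C_0=(L\cup L')\cup L''\cup M_1\cup\dots\cup M_{d-3}$ and regard the line pair $L\cup L'$ as a point of the $\mathbb P^5$ of plane conics; for a conic $Q$ set $C_Q:=Q\cup L''\cup M_1\cup\dots\cup M_{d-3}$, a reduced plane curve of degree $d$. Let $\mathcal B\subseteq\mathbb P^5$ be the open locus of conics $Q$ that are transverse to $L''\cup M_1\cup\dots\cup M_{d-3}$, avoid all of its nodes, and, when $Q$ is a line pair, have node off $L''\cup M_1\cup\dots\cup M_{d-3}$; then $\mathcal B$ is connected and, by general position of the lines of $C_0$, contains $L\cup L'$. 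Over $\mathcal B$ the $\binom d2-1$ ``relevant'' nodes of $C_Q$ --- the points of $Q\cap(L''\cup M_1\cup\dots\cup M_{d-3})$ together with the fixed nodes of $L''\cup M_1\cup\dots\cup M_{d-3}$, but \emph{not} the node of $Q$ when $Q$ degenerates --- form a finite \'etale cover of $\mathcal B$ of degree $\binom d2-1$. Hence the incidence variety $W$ of tuples $(Q;x_1,\dots,x_\delta)$ with the $x_i$ distinct relevant nodes of $C_Q$ is finite \'etale over $\mathcal B$, and the injective morphism $W\to U_{d,\delta}$, $(Q;x_\bullet)\mapsto[C_Q;x_\bullet]$ (one recovers $Q$ from $C_Q$ as the union of its components other than $L'',M_1,\dots,M_{d-3}$), sends each irreducible component of $W$ into one component of $U_{d,\delta}$. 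The fiber of $W$ over $Q=L\cup L'$ contains both $\mathbf C$ and $\mathbf C'$, since both markings avoid $p$; the components of $W$ are the orbits of $\pi_1(\mathcal B)$ acting on ordered $\delta$-tuples of relevant nodes; and so it suffices to produce a monodromy element $\sigma\in G:=\operatorname{Im}(\pi_1(\mathcal B)\to\mathfrak S_{\binom d2-1})$ that interchanges the two points of $Q\cap L''$ and fixes every other relevant node.

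Since $L''$ and the $M_j$ are fixed, $G$ is contained in the abelian group $H=\mathfrak S_2\times\prod_{j}\mathfrak S_2$ permuting each of $Q\cap L''$ and $Q\cap M_j$ within itself and fixing the fixed nodes, and the element we need is the nontrivial element of the $\mathfrak S_2$-factor attached to $L''$. The plan for this is to run a loop in $\mathcal B$ bounding a small disk transverse, at a general point $Q_1$, to the hypersurface $\Delta_{L''}\subset\mathbb P^5$ of conics tangent to $L''$: the hypersurface $\Delta_{L''}$ is irreducible and is equal to none of the $\Delta_{M_j}$ nor contained in the locus of line pairs, so a general $Q_1\in\Delta_{L''}$ is a smooth conic transverse to every $M_j$ for which $C_{Q_1}$ has a single tacnode, on $L''$, and is nodal elsewhere; such a loop stays in $\mathcal B$, induces the transposition of the two points of $Q\cap L''$ (the two roots of the binary quadratic $Q|_{L''}$, which collide simply along $\Delta_{L''}$), and acts trivially on all other relevant nodes because the disk is too small to wind around any $\Delta_{M_j}$, the line-pair locus, or the loci where distinct relevant nodes meet. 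Moving the base point to $L\cup L'$ along a path in $\mathcal B$ conjugates $\sigma$ by an element of $H$, which changes nothing as $H$ is abelian. This last point --- extracting the transposition \emph{supported in the single factor} $\mathfrak S_2$ attached to $L''$, rather than merely surjectivity of $G$ onto each factor --- is the crux of the argument and the place where the geometry of the tangency locus and the local picture near a general tangent conic genuinely enter; everything else is bookkeeping. The general case of equivalent (not merely similar) markings then follows by concatenating such connecting families along a chain of similarities, and, since only \'etale fundamental groups intervene, the argument is insensitive to the characteristic.
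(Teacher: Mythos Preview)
Your argument is correct and takes a genuinely different route from the paper's. Both reduce to similar markings and exploit a family involving conics, but in opposite ways. The paper first uses the forgetful map $U_{d,\binom{d}{2}-1}\to U_{d,\delta}$ to reduce to the case $\delta=\binom{d}{2}-1$; there a general curve in the relevant component is a smooth conic $Q$ plus $d-2$ general lines, and the paper \emph{fixes} $Q$ and all but one of the lines, varies the remaining line $L'_3$, and proves the resulting incidence variety $\widetilde W$ is irreducible by exhibiting a dominant injective map $\widetilde W\to Q^2$, $(L';r'_1,r'_{d-1},\dots)\mapsto(r'_1,r'_{d-1})$---no monodromy computation at all, and the argument is manifestly uniform in the characteristic. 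You instead keep $\delta$ arbitrary, fix $L'',M_1,\dots,M_{d-3}$, deform the line pair $L\cup L'$ inside the $\mathbb P^5$ of conics, and extract the desired transposition as a monodromy element coming from the tangency hypersurface $\Delta_{L''}$. Your approach is more direct (no reduction on $\delta$) and makes the role of the ``free'' node $p$ transparent; the paper's is more elementary and sidesteps fundamental groups entirely. One expositional point: your ``loop bounding a small disk transverse to $\Delta_{L''}$'' is a topological phrasing, and in positive characteristic---especially characteristic~$2$, where the relevant double cover is Artin--Schreier rather than Kummer---it should be recast as an inertia statement: the degree-$2$ cover for the $L''$-factor is ramified along $\Delta_{L''}$ while each $M_j$-factor is unramified there (since $\Delta_{L''}\ne\Delta_{M_j}$), so the inertia subgroup at the generic point of $\Delta_{L''}$ already furnishes the element $(1,0,\dots,0)\in H$ you need.
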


Indeed, by the lemmata, there exists a unique irreducible component $U\subseteq U_{d,\delta}$ that contains $[C_0;p_1,\dotsc,p_\delta]$ for any irreducible marking $\vec\mu=(p_1,\dotsc,p_\delta)$. If $U'\subseteq U_{d,\delta}^\irr$ is any irreducible component, then the projection $\phi(U')$ of $U'$ to $|\mathcal O_{\mathbb P^2}(d)|$ is dense in an irreducible component of $\overline V^\irr_{g,d}$ by Proposition~\ref{prop:decorated severi}, and $C_0\in\phi(\overline{U'})$ by Theorem~\ref{thm:cont_lines}. Thus, there exists a marking $\vec\mu=(p_1,\dotsc, p_\delta)$ such that $[C_0;p_1,\dotsc,p_\delta]\in U'$. Since $U'\subseteq U_{d,\delta}^\irr$, the marking $\vec\mu$ is irreducible by Theorem~\ref{thm:branchstr} (2) and Proposition~\ref{prop:decorated severi} (3). Hence $U'=U$, which completes the proof.
\end{proof}

\begin{proof}[Proof of Lemma~\ref{lem:eqirmar}]
Although the lemma is a particular case of \cite[Lemma~3.10]{Tyo07}, we include its proof for the convenience of the reader. Denote the irreducible components of $C_0$ by $L_1,\dotsc, L_d$, and set $q_{i,j}:=L_i\cap L_j$. Let $\vec\mu$ be an irreducible marking.

First, let us show that $\vec\mu$ is equivalent to a marking disjoint from $L_1$. If $\mu\cap L_1\ne\emptyset$, then the union $C$ of the components $L$ of $C_0$ for which $L\cap L_1\in\mu$ is not empty. Let $C'$ be the union of the remaining components excluding $L_1$. Since $C_0\setminus\mu$ is connected, $C'\ne\emptyset$, and there exists $r\in C\cap C'$ that does not belong to $\mu$. Let $L_i\subseteq C$ and $L_j\subseteq C'$ be the components containing $r$. Then $q_{1,i}\in\mu, q_{1,j}\notin\mu$, and hence $\vec\mu\sim\vec\mu^\tau$, where $\tau$ is the transposition switching $q_{1,i}$ and $r$. Since $r\notin\mu$ and $q_{1,i}\in\mu$, it follows that $|\mu^\tau\cap L_1|<|\mu\cap L_1|$. Thus, repeating this process finitely many times, we can find a marking disjoint from $L_1$ and equivalent to $\vec\mu$. It remains to show that if $\vec\mu$ and $\vec\mu'$ are disjoint from $L_1$, then $\vec\mu\sim\vec\mu'$.

Assume that $\vec\mu$ is disjoint from $L_1$. Let $\tau$ be a transposition switching a pair of distinct nodes in $C_0^\sing\setminus L_1$. It is sufficient to prove that $\vec\mu\sim\vec\mu^\tau$. Indeed, such transpositions generate the full symmetric group of nodes $C_0^\sing\setminus L_1$. Therefore, if $\vec\mu'$ is disjoint from $L_1$, then $\vec\mu'=\vec\mu^\sigma$ for some permutation $\sigma$ of $C_0^\sing\setminus L_1$, and hence $\vec\mu\sim\vec\mu'$. Let us prove that $\vec\mu\sim\vec\mu^\tau$.

Assume first, that there exist indices $i,j,k>1$ such that $\tau$ switches the nodes $q_{i,j}$ and $q_{i,k}$. Let $\tau_j$ be the transposition switching $q_{i,j}$ and $q_{i,1}$, and $\tau_k$ the transposition switching $q_{i,k}$ and $q_{i,1}$. Since $q_{1,j},q_{1,k}\notin\mu$, it follows that $\vec\mu$ is equivalent to $\vec\mu^{\tau_j\tau_k\tau_j}$. However, $\tau_j\tau_k\tau_j=\tau$, which implies the claim. Assume now, that there are no $i,j,k$ as above. Then there exist four distinct indices $i,j,k,l>1$ such that $\tau$ switches $q_{i,j}$ and $q_{k,l}$. Let $\tau_i$ be the transposition switching $q_{i,j}$ and $q_{i,l}$, and $\tau_l$ be the transposition switching $q_{i,l}$ and $q_{k,l}$. Then $\vec\mu\sim\vec\mu^{\tau_i\tau_l\tau_i}$ by the first case considered above. However, $\tau=\tau_i\tau_l\tau_i$, and we are done.
\end{proof}

\begin{proof}[Proof of Lemma~\ref{lem:eqmarkcorto1comp}]
As before, we denote the irreducible components of $C_0$ by $L_1,\dotsc, L_d$, and set $q_{i,j}:=L_i\cap L_j$. It is sufficient to prove the lemma for similar markings. Set $\mu:=(p_1,\dotsc,p_\delta)$, and let $\tau$ be the transposition for which $\mu^\tau=(p'_1,\dotsc,p'_\delta)$. Without loss of generality we may assume that $q_{1,2}\notin\mu$, and $\tau$ is the transposition switching $q_{1,3}$ and $q_{2,3}$. Set $\delta':=\binom{d}{2}-1$, and pick a $\delta'$-marking $\vec\mu'=(p_1,\dotsc,p_{\delta'})$ extending $\mu$ such that $\mu'=C_0^\sing\setminus\{q_{1,2}\}$. It is enough to show that there exists an irreducible component $U\subset U_{d,\delta'}$ containing both $[C_0;p_1,\dotsc,p_{\delta'}]$ and $[C_0;\tau(p_1),\dotsc,\tau(p_{\delta'})]$. Indeed, consider the natural forgetful map $\psi\:U_{d,\delta'}\to U_{d,\delta}$. Then $\psi(U)\subseteq U_{d,\delta}$ is an irreducible subvariety that contains both $\mathbf{C}_0:=[C_0;p_1,\dotsc,p_{\delta}]$ and $\mathbf{C}_0^\tau:=[C_0;\tau(p_1),\dotsc,\tau(p_{\delta})]$, and hence $\mathbf{C}_0$ and $\mathbf{C}_0^\tau$ belong to the same irreducible component of $U_{d,\delta}$, as asserted.

Let us show that $U$ as above exists. To simplify the notation, we assume that $\delta=\delta'$, and reorder the points so that
$(p_1,\dotsc,p_\delta)=\left(q_{1,3},\dotsc,q_{1,d},q_{2,3},\dotsc,q_{2,d},q_{3,4},\dotsc,q_{3,d},\dotsc,q_{d-1,d}\right).$
In particular, $q_{1,3}=p_1$ and $q_{2,3}=p_{d-1}$. Then $\tau$ is the transposition on $C_0^\sing$ switching $p_1$ and $p_{d-1}$. Notice that the action of $\tau$ on $\{\mathbf{C}_0, \mathbf{C}_0^\tau\}$ extends to an action on $U_{d,\delta}$ that switches the first marked point with the $(d-1)$-st, and hence induces a permutation on the set of the irreducible components of $U_{d,\delta}$. Let $U$ be the irreducible component containing  $\mathbf{C}_0$, and let us show that $\mathbf{C}_0^\tau\in U$.

By Proposition~\ref{prop:decorated severi}, the variety $U_{d,\delta}$ has pure dimension $2d+1$. In particular, $\dim(U)=2d+1$. Thus, by dimension count, a general $\mathbf{C}:=[C;r_1,\dotsc,r_{\delta}]\in U$ is a union of a general conic $Q$ and $d-2$ lines $L'_3,\dotsc, L'_d$ in general position. Furthermore, $Q\cap L'_i=\{r_{i-2},r_{d+i-4}\}$ for all $3\le i\le d$, and $L'_3\cap L'_i=\{r_{2d+i-7}\}$ for all $4\le i\le d$. We fix general $Q,L'_4,\dotsc,L'_d$ as above, and set $D:=(\cup_{i\ge 4}L'_i)\cup Q$.

Consider the dense open subset $W\subset (\PP^2)^*$ parameterizing the lines $L'\subset\PP^2$ that intersect $D$ transversally and such that $L'\cap D^\sing=\emptyset$. Let $\widetilde{W}$ be the incidence variety parameterizing tuples $[L'; r'_1,r'_{d-1},r'_{2d-3},\dotsc,r'_{3d-7}]$, where $r'_1,r'_{d-1}$ are the points of intersection of $L'$ with $Q$, and $r'_{2d+i-7}$ is the point of intersection of $L'$ with $L'_i$ for all $4\le i\le d$. Then $\widetilde{W}\to W$ is an \'etale covering of degree two. Furthermore, there is a natural morphism $\iota\:\widetilde{W}\to U_{d,\delta}$, that maps $[L'; r'_1,r'_{d-1},r'_{2d-3},\dotsc,r'_{3d-7}]$ to $$[L'\cup D; r'_1,r_2,\dotsc, r_{d-2},r'_{d-1},r_d,\dotsc r_{2d-4},r'_{2d-3},\dotsc,r'_{3d-7},r_{3d-6},\dotsc r_\delta].$$

It is easy to see that $\widetilde{W}$ is irreducible. Indeed, consider the natural projection $\widetilde{W}\to Q^2$ mapping $[L'; r'_1,r'_{d-1},r'_{2d-3},\dotsc,r'_{3d-7}]$ to $(r'_1,r'_{d-1})$. Since the line $L'$ is uniquely determined by the pair of points $(r'_1,r'_{d-1})$, the variety $\widetilde{W}$ is mapped bijectively onto its image in $Q^2$, and hence the image has dimension $\dim(\widetilde{W})=\dim(W)=\dim\left((\PP^2)^*\right)=2=\dim(Q^2)$. Thus, $\widetilde{W}\to Q^2$ is dominant, and bijective onto its image. Hence $\widetilde{W}$ is irreducible since so is $Q^2$. To finish the proof, it remains to notice that both $\mathbf{C}$ and $\mathbf{C}^\tau$ belong to $\iota(\widetilde{W})$, and hence belong to the same irreducible component of $U_{d,\delta}$. But $\mathbf{C}\in U$ and $\mathbf{C}^\tau\in U^\tau$, which implies $U=U^\tau$. Thus, $\mathbf{C}_0^\tau\in U^\tau=U$, and we are done.
\end{proof}

\providecommand{\bysame}{\leavevmode\hbox to3em{\hrulefill}\thinspace}
\providecommand{\MR}{\relax\ifhmode\unskip\space\fi MR }
% \MRhref is called by the amsart/book/proc definition of \MR.
\providecommand{\MRhref}[2]{%
  \href{http://www.ams.org/mathscinet-getitem?mr=#1}{#2}
}
\providecommand{\href}[2]{#2}

\end{document}